\newcommand{\ep}{\varepsilon}
\newcommand{\trdeg}{\textnormal{trdeg}}
\newcommand{\A}{\mathbf{A}}
\newcommand{\F}{\mathbf{F}}
\newcommand{\N}{\mathbf{N}}
\newcommand{\Q}{\mathbf{Q}}
\newcommand{\Z}{\mathbf{Z}}
\renewcommand{\P}{\mathbb P}
\newcommand{\pf}{\mathfrak p}
\newcommand{\qf}{\mathfrak q}
\renewcommand{\a}{\alpha}
\renewcommand{\b}{\beta}
\renewcommand{\emptyset}{\varnothing}
\DeclareMathOperator{\Hom}{Hom}
\DeclareMathOperator{\Spec}{Spec}
\DeclareMathOperator{\Xf}{\mathfrak X}
\DeclareMathOperator{\Yf}{\mathfrak Y}
\DeclareMathOperator{\codim}{codim}
\newcommand{\ceil}[1]{\lceil #1\rceil}
\newcolumntype{C}{>{$}c<{$}} 
\newcommand\norm[1]{\lVert#1\rVert}
\renewcommand{\ker}{\textup{ker }}
\renewcommand{\char}{\textup{char }}
\renewcommand{\bar}{\overline}
\theoremstyle{plain} 
\newtheorem{theorem}{Theorem}[section]
\newtheorem{definition}[theorem]{Definition}
\newtheorem*{theorem*}{Theorem}
\newtheorem{prop}[theorem]{Proposition}
\newtheorem{lemma}[theorem]{Lemma}
\newtheorem{conj}[theorem]{Conjecture}
\newtheorem*{claim}{Claim}
\newtheoremstyle{remboldstyle}
  {}{}{}{}{\bfseries}{.}{.5em}{{\thmname{#1}}{\thmnumber{#2}}{\thmnote{ (#3)}}}
\theoremstyle{remboldstyle}
\newtheorem*{remark*}{Remark}
\newtheorem{example}[theorem]{Example\ }
\newcommand{\ul}{\underline}
\newcommand{\rk}{\textnormal{rk}}
\newcommand{\prk}{\textnormal{prk}}
\newcommand{\str}{\textnormal{str}}
\newcommand{\ark}{\textnormal{ark}}
\newcommand{\Brk}{\textnormal{Brk}}
\newcommand{\grk}{\textnormal{grk}}
\renewcommand{\char}{\textnormal{char}}
\title{Strength is bounded linearly by Birch rank}
\date{}
\author{Benjamin Baily}
\address{Department of Mathematics, University of Michigan, Ann Arbor, MI}
\email{\href{mailto:bbaily@umich.edu}{bbaily@umich.edu}}
\thanks{BB was supported by NSF grant DMS-2101075}
\author{Amichai Lampert}
\address{Department of Mathematics, University of Michigan, Ann Arbor, MI}
\email{\href{mailto:amichai@umich.edu}{amichai@umich.edu}}
\thanks{AL was supported by NSF grant DMS-2402041}
\begin{document}

\begin{abstract}
    Let $f$ be a homogeneous polynomial over a field. For many fields, including number fields and function fields, we prove that the strength of $f$ is bounded above by a constant multiple of the Birch rank of $f.$ The constant depends only on the degree of $f$ and the absolute transcendence degree of the field. This is the first linear bound obtained for forms of degree greater than three, partially resolving a conjecture of Adiprasito, Kazhdan and Ziegler.

    Our result has applications for the Hardy-Littlewood circle method. The circle method yields an asymptotic formula for counting integral zeros of (collections of) homogeneous polynomials, provided the Birch rank is sufficiently large -- a natural \emph{geometric} condition. Our main theorem implies that these formulas hold even if we only assume a similar lower bound on the strength of the (collection of) homogeneous polynomials -- an \emph{arithmetic} condition which is a priori weaker. This answers questions of Cook-Magyar \cite{CM-primes} and Skinner \cite{Ski}, and also yields a new proof of a  seminal result of Schmidt \cite[Theorem II]{Sch} as a consequence of Birch's earlier work \cite{Bir}.

    Over finite fields we obtain a quasi-linear bound for partition rank of tensors in terms of analytic rank, improving Moshkovitz-Zhu's state of the art bound \cite{MZ-old}.    
\end{abstract}

\maketitle

\section{Introduction}

\subsection{Background} 
Let $K$ be a field and $f \in K[x_1,\ldots,x_n]$ a homogeneous polynomial of degree $d.$ In 1962, Birch  \cite{Bir} introduced the following notion of rank.

\begin{definition}[Birch rank]
    The \emph{Birch rank} of $f$ is 
    \[
    \Brk(f) = \codim_{\A^n} (x:\nabla f(x) = 0).
    \]
\end{definition}

For $K=\Q,$ Birch obtained an asymptotic formula for the number of integer zeroes of $f$ of bounded height, provided that $\Brk(f)$ is sufficiently large. We will call such a result a \emph{Birch-type result}. Birch-type results have since been obtained in many other settings, including number fields \cite{Ski}, function fields \cite{Lee}, collections of forms of different degrees \cite{BHB} and for solutions whose entries are all prime \cite{CM-primes}. In the context of the Hardy-Littlewood circle method, or more generally when estimating exponential sums involving $f$, large Birch rank is a natural hypothesis as it allows good control of these sums. 

In $1985,$ Schmidt \cite{Sch} defined another notion of rank for polynomials.

\begin{definition}[Strength]
    The \emph{strength} of $f$ is 
    \[
    \str(f) = \min\left\{s: f = \sum_{i=1}^s g_ih_i\right\},
    \]
    where $g_i,h_i\in K[x_1,\ldots,x_n]$ are forms of degree less than $d$.
\end{definition}

\begin{remark*}
    Strength is sometimes known as Schmidt rank in the literature. The term `strength' was introduced by Ananyan and Hochster \cite{AH}.
\end{remark*}
If $f = g_1h_1 + \dots + g_rh_r$, then $\{x: \forall i, g_i(x) = h_i(x) = 0\} \subseteq \{x: \nabla f(x) = 0\}$, so $\Brk(f)\leq 2\cdot\str(f)$ is an easy consequence of Krull's height theorem. The converse implication, however, is far from obvious and is the subject of this paper. Schmidt proved that when $K=\Q$ it is sufficient to assume only large strength in order to obtain an asymptotic formula for the number of integer solutions. We will call such a result a \emph{Schmidt-type result}. We emphasize that Schmidt did \emph{not} prove his result by showing that large strength implies large Birch rank, but instead by some ingenious alternative arguments. Because strength is defined via forms of lower degree, Schmidt's result lends itself naturally to inductive arguments with consequences for the number of integer zeros of \emph{arbitrary} forms.  

In the following decades, strength has found myriad applications ranging from additive combinatorics to commutative algebra. In 2009, motivated by the inverse conjecture for the Gowers norm, Green and Tao \cite{GT} proved that polynomials of high strength over finite fields have equidistributed values. Their result was extended and improved by many others, see e.g. \cite{KL,HS,TZ,Mil,Jan,BL,CM,LZ-rel,MZ}. In 2020, Ananyan and Hochster \cite{AH} proved Stillman's conjecture regarding the projective dimension of ideals in polynomial rings. Strength played a central role in their proof, as well as in subsequent related work, such as \cite{ESS1,Dra,BDE,ESS2}. 

In the original context of the Hardy-Littlewood circle method, however, Schmidt's result has not been extended to other settings. The methods used in \cite{Sch} have proven difficult to adapt elsewhere, and the analogous Schmidt-type results are explicitly left open in the work of Skinner \cite{Ski} and Cook-Magyar \cite{CM-primes}.   

\subsection{Results}

Our main result bridges the gap between Birch-type results and Schmidt-type results by showing that $\str(f) \ll_d \Brk(f)$ in all the extensions of Birch's theorem mentioned above. This converts all of the Birch-type results above into  Schmidt-type results, with the bound increasing by only a constant factor.  As stated in the abstract, this answers questions raised by Skinner and Cook-Magyar, and recovers Schmidt's seminal result as a consequence of Birch's theorem. See section 2 for details.

Bounds for strength in terms of Birch rank were obtained for cubics in \cite{Der}, \cite{AKZ}, and \cite{CM-cubic}; for quartics in \cite{KP}; and for arbitrary degree in \cite{LZ}, albeit only for some fields. More general results were obtained in \cite{BDLZ} and \cite{BDS}. Adiprasito, Kazhdan and Ziegler conjectured in \cite{AKZ} that linear bounds hold for all degrees.

\begin{conj}\label{conj:str-brk}
Suppose that $\char(K) = 0$  or $\char(K) > d.$  Then 
        \[
        \str(f) \ll_d \Brk(f).
        \]
\end{conj}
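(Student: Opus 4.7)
The plan is to proceed by induction on the degree $d$, using the base case $d=2$ where strength and Birch rank are known to agree up to a factor of two via symmetric bilinear form theory. For the inductive step, fix a form $f$ of degree $d$ with $\Brk(f) = r$, and seek a strength decomposition with $O_d(r)$ terms.

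The starting point is the geometric content of Birch rank: the Jacobian ideal $J = (\partial_1 f, \ldots, \partial_n f)$ has height exactly $r$. A natural first move is to replace the full generating set by a more parsimonious one — by prime avoidance, $r$ generic $K$-linear combinations $D_1, \ldots, D_r$ of the partials generate an ideal of height $r$, although this ideal will generally be strictly smaller than $J$. The characteristic hypothesis $\char(K) = 0$ or $\char(K) > d$ then makes Euler's identity $d\,f = \sum_j x_j \partial_j f$ available, linking $f$ itself to its partial derivatives and suggesting that a control of how the $\partial_j f$ relate to the $D_k$ should translate directly into a strength decomposition of $f$.

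The main obstacle is to convert the \emph{geometric} Birch-rank information into an \emph{algebraic} strength decomposition of $f$. Several issues conspire to make this delicate. First, the partials are forms of degree $d-1$, so ideal-membership of $\partial_j f$ in $(D_1,\ldots,D_r)$ forces multipliers of degree zero — that is, the literal algebraic analog of the geometric statement is a dimension condition on the $K$-span of the partials, which is strictly stronger than having Birch rank $r$. Second, passing to $V(D_1, \ldots, D_r)$ can introduce spurious components away from the true singular locus, so even radical-membership is not automatic. A successful plan must therefore step outside a naive Euler substitution to access $f$ — perhaps via the observation that $f$ can be reconstructed from its behavior along an $r$-dimensional linear subspace on which the gradient map is generically nondegenerate, together with lower-degree correction terms amenable to induction.

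I expect the crux of the argument to lie in this reconstruction step and in controlling the correction terms' Birch rank. Over fields of bounded absolute transcendence degree — such as number fields and function fields, as announced in the abstract — effective bounds from commutative algebra (primary decomposition, Nullstellensatz-type estimates) should yield the required uniformity, with constants depending only on $d$ and the transcendence degree; this is why a field-dependent version of the conjecture is approachable by this route. Achieving the full conjecture with a constant depending only on $d$ would require genuinely field-independent control of such bounds, which currently remains out of reach and is, in my view, the deepest obstruction to the conjecture.
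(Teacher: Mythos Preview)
First, a framing issue: the statement you are attempting is \emph{Conjecture}~\ref{conj:str-brk}, which the paper explicitly leaves open for $d>3$. What the paper actually proves is the weaker Theorem~\ref{thm:str-brk}, where the constant depends on both $d$ and $r(K)$, the absolute transcendence degree of $K$. Your final paragraph shows you are aware of this distinction, so in effect you are proposing a route to Theorem~\ref{thm:str-brk} rather than to the full conjecture.

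Even as a plan for Theorem~\ref{thm:str-brk}, your approach is entirely different from the paper's and has a genuine gap at its core. The paper never touches Jacobian ideals, prime avoidance, or Euler's identity. Instead it passes to the symmetric multilinear form $\tilde f$, bounds $\prk(\tilde f)$ by a constant times the \emph{analytic rank} $\ark(\tilde f)$ (Lemma~\ref{lem:rk-sing}), and then proves the key technical inequality $\ark(F)\le (d-1)^r\grk(F)$ (Theorem~\ref{thm:ark-grk}) by arithmetic point-counting: spreading out over $\Z[t_1,\dots,t_s]$ or $\F_q[t_1,\dots,t_s]$, reducing modulo primes, and using a scaling inequality (Lemmas~\ref{lem:scale-charp} and~\ref{lem:scale-char0}) to lift many finite-field singular points to integral points of bounded height. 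The dependence on $r(K)$ enters through the number of variables one must scale down.

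Your plan, by contrast, hinges on a ``reconstruction step'' that you do not actually carry out and that I do not see how to make work. You correctly observe that $\Brk(f)=r$ does \emph{not} imply the partials span an $r$-dimensional space, and that $V(D_1,\dots,D_r)$ can pick up spurious components. But the proposed remedy --- reconstructing $f$ from its restriction to an $r$-dimensional subspace where $\nabla f$ is generically nondegenerate, plus lower-degree corrections with controlled Birch rank --- is not a known technique, and there is no evident mechanism by which Nullstellensatz-type effective bounds would produce a \emph{linear} dependence on $r$. Effective Nullstellensatz and primary-decomposition bounds are typically doubly exponential in the relevant parameters; getting anything linear out of them would itself be a major result. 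This is the concrete gap: the step you flag as ``the crux'' is precisely where the argument is missing, and the tools you invoke do not obviously supply it.
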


So far, this conjecture has been proven only for $d \le 3.$ Our main result partially resolves conjecture \ref{conj:str-brk}. To state it, we first define
 \[
    r = r(K) = \begin{cases}
        \trdeg(K/\Q)+1\ ,\ &\char(K) = 0, \\
        \max(\trdeg(K/\F_p),1), \ &\char(K) = p > 0.
    \end{cases}.
\]
Our result is then as follows.

\begin{theorem}\label{thm:str-brk}
    Suppose that $\char(K) = 0$  or $\char(K) > d.$ 
    \begin{enumerate}
        \item If $K$ is infinite then $\str(f) \le  C(d,K)\cdot \Brk(f),$ where 
        \[
        C(d,K) = (2^{d-1}-1)(d-1)^{r+1}\binom{d}{\lfloor d/2 \rfloor}.
        \]
        \item If $K$ is finite then $\str(f) \ll_d \Brk(f)\cdot  \log (\Brk(f)+1).$
    \end{enumerate}
\end{theorem}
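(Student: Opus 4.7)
My plan is a double induction, primarily on the degree $d$ and secondarily on the transcendence degree $r = r(K)$. The base cases $d \leq 3$ are handled by the cubic results of Derksen, Adiprasito-Kazhdan-Ziegler and Cook-Magyar cited in the introduction. Assuming the linear bound for every degree less than $d$, over every admissible base field, I would carry out the following program.

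The first reduction is a descent on transcendence degree, collapsing $K$ to an algebraic closure of the prime field. Present $K$ as an algebraic extension of $k(t_1, \ldots, t_r)$ with $k = \Q$ or $\F_p$, and strip the $t_i$ off one at a time. At each step, view $f$ as a form over $k(t_1, \ldots, t_{r-1})[t_r]$, choose a specialization $t_r \mapsto \alpha$ that is generic enough to preserve Birch rank (the bad specializations lie in a proper subvariety of the parameter space for $\alpha$), and argue that any strength decomposition of the specialization lifts back at a multiplicative cost of at most $(d-1)$, arising from the degree of the algebraic/resultant relations that govern the lift. Iterating $r$ times, plus a final algebraic-closure step, accounts for the factor $(d-1)^{r+1}$ in $C(d,K)$.

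It then suffices to bound $\str(f) \leq (2^{d-1}-1)\binom{d}{\lfloor d/2 \rfloor} \cdot \Brk(f)$ over an algebraically closed field. Set $b = \Brk(f)$ and pick $b$ partials $g_1 = \partial_{i_1} f,\ldots,g_b = \partial_{i_b} f$ whose common vanishing locus $V$ has codimension exactly $b$. Euler's identity (valid because $\char K \nmid d$) then gives
\[
d \cdot f \;=\; \sum_{j=1}^{b} x_{i_j}\, g_j \;+\; \sum_{i \notin I} x_i \, \partial_i f,
\]
so the first sum already has strength at most $b$. For the remaining partials, each $\partial_i f$ vanishes on $V$; combined with the complete-intersection structure of $(g_1,\ldots,g_b)$, one can (after passing through polarization to the symmetric $d$-tensor attached to $f$, which is where the central binomial coefficient $\binom{d}{\lfloor d/2 \rfloor}$ naturally appears) express $\partial_i f$ as a $g_j$-combination plus a residue of degree $d-1$ and controlled Birch rank. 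Applying the inductive hypothesis to these residues and solving the recurrence $C(d) = 2C(d-1) + 1$ with $C(2) = 1$ yields the factor $2^{d-1} - 1$.

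The chief obstacle is the extraction step above: the variety $V$ need not be reduced, so the Nullstellensatz gives only $(\partial_i f)^m \in (g_1,\ldots,g_b)$ rather than an honest ideal membership. Turning this power-membership into a concrete strength decomposition of $f$ — without blowing up the number of summands per inductive layer — is the delicate part; I expect it to be handled via repeated polarization together with the characteristic hypothesis, and to be the true source of the $\binom{d}{\lfloor d/2\rfloor}$ factor. For the finite field case in (2), the Zariski-generic specialization in the descent step is unavailable when $|K|$ is small; the standard workaround is to pass to an extension $\F_{q^m}$ with $m = O(\log \Brk(f))$, which accounts for the extra logarithmic factor.
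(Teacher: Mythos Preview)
Your proposal has a genuine gap, and it is precisely at the step you flag yourself. The assertion that ``any strength decomposition of the specialization lifts back at a multiplicative cost of at most $(d-1)$'' is doing all the work and is not justified. Going from a decomposition of $f|_{t_r=\alpha}$ over $k(t_1,\ldots,t_{r-1})$ to a decomposition of $f$ over $k(t_1,\ldots,t_r)$ with any controlled loss is essentially asking for a quantitative comparison between $\str_K(f)$ and $\str_{\overline{K}}(f)$, which is exactly the content of the theorem. There is no resultant or lifting mechanism that produces this; strength can only drop under specialization, and there is no known general way to bound the jump. The same difficulty recurs in your ``final algebraic-closure step.'' Your algebraically-closed step is also incomplete (as you note, the Nullstellensatz gives only power membership), but in fact that part is already known: the bound $\overline{\str}(f)\le (d-1)\Brk(f)$ over algebraically closed fields is the KLP result cited as Lemma~\ref{lem:rk-sing}(1). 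The real obstacle is the descent from $\overline{K}$ to $K$, and your specialize-then-lift scheme does not address it.

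The paper avoids this entirely by passing to the multi-linear setting and using \emph{analytic rank}, an invariant defined via $K$-rational points of the singular variety $S_{\tilde f}$. The chain is
\[
\str(f)\ \le\ \prk(\tilde f)\ \le\ (2^{d-1}-1)\,\ark(\tilde f)\ \le\ (2^{d-1}-1)(d-1)^r\,\grk(\tilde f)\ \le\ (2^{d-1}-1)(d-1)^{r+1}\tbinom{d}{\lfloor d/2\rfloor}\,\Brk(f),
\]
where the first and last inequalities are polarization (Lemma~\ref{lem:polar}) and KLP (Lemma~\ref{lem:rk-sing}), and the middle two are Lemma~\ref{lem:rk-sing}(2) and the new Theorem~\ref{thm:ark-grk}. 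There is no induction on $d$; the factor $2^{d-1}-1$ is the KLP constant in $\prk\le(2^{d-1}-1)\ark$, not the solution of a degree recursion, and the factor $(d-1)^r$ comes from a scaling inequality for rational zeros of multi-linear systems (Lemma~\ref{lem:scale-charp}) iterated across the transcendence basis, combined with Lang--Weil point counting. For finite $K$, the logarithm does arise from passing to $\F_{q^l}$ with $l\asymp\log\Brk(f)$, but the input is Moshkovitz--Zhu's Proposition~\ref{prop:LR-stable} together with the analytic-rank bound, not a specialization argument.
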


Theorem \ref{thm:str-brk} for finite fields complements the results of Moshkovitz-Zhu \cite{MZ} to improve their current record bound for partition rank in terms of analytic rank for tensors. The relevant definitions are given in \S 2. We obtain the following.

\begin{theorem}\label{thm:ark-prk}
    Suppose $F$ is a  multi-linear map of degree $d$ over a finite field. Then 
    \[
    \prk(F) \ll_d \ark(F)\cdot \log(\ark(F)+1).
    \] 
\end{theorem}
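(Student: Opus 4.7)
The plan is to derive Theorem \ref{thm:ark-prk} from Theorem \ref{thm:str-brk}(2) by passing from the multi-linear map $F$ to a suitable polynomial and comparing the two flavors of rank. Concretely, given $F : V_1 \times \cdots \times V_d \to K$, I would form the multi-homogeneous polynomial
\[
f(x_1, \ldots, x_d) = F(x_1, \ldots, x_d) \in K[V_1 \oplus \cdots \oplus V_d],
\]
a form of total degree $d$ and multi-degree $(1, \ldots, 1)$, and relate $\prk(F), \ark(F)$ to $\str(f), \Brk(f)$.

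First I would show $\prk(F) \le (2^d - 2)\,\str(f)$. Given a strength decomposition $f = \sum_{i=1}^{s} g_i h_i$ with each $g_i, h_i$ of positive degree less than $d$, decompose each factor into its multi-homogeneous components with respect to the blocks $x_1, \ldots, x_d$. Because $f$ itself is multi-homogeneous of multi-degree $(1, \ldots, 1)$, the only summands of $g_i h_i$ that survive are of the form $g_{i,S}\, h_{i,T}$, where $[d] = S \sqcup T$ is a non-trivial bipartition and $g_{i,S}, h_{i,T}$ carry multi-degrees $\mathbf{1}_S, \mathbf{1}_T$ respectively. Each such product is a rank-one term in a partition-rank decomposition of $F$, yielding the claimed bound.

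Next I would establish $\Brk(f) \ll_d \ark(F)$. Since the exponential sum defining analytic rank depends only on the underlying function, one has $\ark(f) = \ark(F)$, so it suffices to compare $\Brk(f)$ with $\ark(f)$. The bound $\Brk(f) \ll_d \ark(f)$ for any form $f$ of degree $d$ over a finite field is a classical consequence of Birch's exponential-sum estimate (obtained via Weyl differencing): large Birch rank forces a small value of $|\sum_x \psi(f(x))|$, hence a large analytic rank. This is essentially the input that made \cite{Bir} work.

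Applying Theorem \ref{thm:str-brk}(2) to $f$ then gives $\str(f) \ll_d \Brk(f)\log(\Brk(f)+1)$, and chaining all three ingredients produces
\[
\prk(F) \le (2^d - 2)\,\str(f) \ll_d \Brk(f)\log(\Brk(f)+1) \ll_d \ark(F)\log(\ark(F)+1),
\]
which is Theorem \ref{thm:ark-prk}. The bulk of the non-routine work is hidden in Theorem \ref{thm:str-brk}(2) itself; here the main obstacle in the reduction is the bookkeeping of multi-homogeneous components in Step 1, particularly confirming that each contributing pair $(S,T)$ is truly a non-trivial bipartition (so the product $g_{i,S} h_{i,T}$ is a legitimate partition-rank summand). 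Both remaining steps — the multi-degree extraction and the Weyl differencing bound — incur only constants depending on $d$.
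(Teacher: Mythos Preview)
Your detour through the polynomial $f$ and Theorem \ref{thm:str-brk}(2) introduces a characteristic restriction that Theorem \ref{thm:ark-prk} does not carry. Theorem \ref{thm:str-brk} assumes $\char(K)=0$ or $\char(K)>d$; this hypothesis is genuinely used in its derivation from the multi-linear Theorem \ref{thm:prk-grk}, via the polarization step of Lemma \ref{lem:polar}. So your argument establishes Theorem \ref{thm:ark-prk} only when $\char(K)>d$, leaving all finite fields of small characteristic uncovered. The paper's proof avoids this by staying in the multi-linear world throughout: it bounds $\grk(F)\ll_d \ark(F)$ directly (Lemma \ref{lem:grk-linear-bound-in-ark}, valid in every characteristic) and then applies Theorem \ref{thm:prk-grk}(2), which likewise carries no characteristic hypothesis, to obtain $\prk(F)\ll_d \grk(F)\log(\grk(F)+1)\ll_d \ark(F)\log(\ark(F)+1)$.

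Even restricting to $\char(K)>d$, your route is logically downstream of the paper's, since Theorem \ref{thm:str-brk}(2) is itself deduced from Theorem \ref{thm:prk-grk}; the passage to $f$ and back adds bookkeeping without new content. Your Step~3 also wants a precise finite-field statement: Birch's estimate in \cite{Bir} is an exponential-sum bound over $\Q$, and the analogous inequality $\Brk(f)\ll_d \ark(f)$ over $\F_q$, with implied constant independent of $q$, is not something you can simply quote.
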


\begin{remark*}
Theorems \ref{thm:str-brk} and \ref{thm:ark-prk} for finite fields were recently proved independently in \cite{MZ} based on recent results in  \cite{ChenYe,MZ-grar}. The results of \cite{ChenYe,MZ-grar} are obtained by a different method than ours and lead to slightly weaker bounds.
\end{remark*}

\subsection*{Layout of the paper} In \S 2 we state our results for multi-linear forms in detail and describe some applications of theorem \ref{thm:str-brk} for the Hardy-Littlewood circle method. In \S 3 we recall known results and use these to show that our results all follow from  theorem \ref{thm:ark-grk}. We then describe the structure of the proof of this theorem. In \S 4 we make some reductions regarding the fields we must consider, and establish results relating point counting to dimension - including an asymptotic characterization of geometric rank. In \S 5 we prove theorem \ref{thm:ark-grk} in positive characteristic and in \S 6 we prove theorem \ref{thm:ark-grk} in characteristic zero.

\subsection*{Acknowledgements} The first author is grateful to Karen Smith for feedback on \S 4. The second author is grateful to Andrew Snowden for suggesting the Weil restriction argument of lemma \ref{lemma:separable} and to Guy Moshkovitz for valuable conversations.

\section{Multi-linear results and applications of theorem \ref{thm:str-brk}}

\subsection{Multi-linear results}
Let $V$ be a finite-dimensional vector space over $K$ and let $F:V^d\to K$ be a multi-linear map, i.e. a  tensor of degree $d.$ The analogue of Birch rank in the multi-linear setting is \emph{geometric rank}, which was defined by  Kopparty, Moshkovitz and Zuiddam \cite{KMZ20}.

\begin{definition}[Geometric rank]
The \emph{geometric rank} of $F$ is $\grk(F) = \codim_{V^{d-1}} S_F,$ where 
\[
S_F = \left( (x_1,\ldots,x_{d-1})\in V^{d-1}:F(x_1,\ldots,x_{d-1},\cdot) \equiv 0 \right).
\]
\end{definition}

There is also a multi-linear analogue of strength, called \emph{partition rank}. This was introduced by Naslund in \cite{Nas}.

\begin{definition}[Partition rank]
    We say that $F$ has \emph{partition rank one} if there exists a subset $\emptyset\neq I\subsetneq [d]$ such that $F(x_1,\ldots,x_d) = G((x_i)_{i\in I})\cdot H((x_j)_{j\in [d]\setminus I})$, where $G:V^{|I|}\to K$ and $H:V^{d-|I|}\to K$ are nonzero multi-linear forms. In general, the \emph{partition rank} of $F$ is 
    \[
    \prk(F) = \min\{r: F = F_1+\ldots+F_r\},
    \]
    where the $F_i:V^d\to K$ are multi-linear forms of partition rank one.
\end{definition}

The multi-linear version of theorem \ref{thm:str-brk} can then be stated as follows.

\begin{theorem}\label{thm:prk-grk} Let $r = r(K)$ be as in theorem \ref{thm:str-brk}.
    \begin{enumerate}
    \item For infinite $K$ we have 
    \[
      \prk(F) \le (2^{d-1}-1)(d-1)^r  \grk(F),
    \]   
    \item For finite $K$ we have
    \[
     \prk(F) \ll_d \grk(F)\cdot  \log (\grk(F)+1).
    \]
    \end{enumerate}
\end{theorem}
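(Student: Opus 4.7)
The plan is to deduce Theorem \ref{thm:prk-grk} from the ``analytic rank versus geometric rank'' bound (Theorem \ref{thm:ark-grk}) advertised in the outline of \S 3, combined with already-known bounds relating partition rank to analytic rank. Over a finite field, the Moshkovitz--Zhu bound \cite{MZ} supplies $\prk(F) \ll_d \ark(F)\log(\ark(F)+1)$, and composing this with the linear bound $\ark(F) \ll_d \grk(F)$ that Theorem \ref{thm:ark-grk} should provide yields part (2) immediately. The real content is therefore Theorem \ref{thm:ark-grk} itself, and the work for Theorem \ref{thm:prk-grk} lies in transporting that bound to arbitrary $K$ of transcendence degree $r$.

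For infinite $K$, I would reduce to the finite-field situation by specialization. Pick a finitely generated $\Z$- or $\F_p$-subalgebra $A \subset K$ over which both $F$ and the affine variety $S_F$ are defined, and reduce modulo a closed point $\mf \subset \Spec A$ with finite residue field of sufficiently large characteristic. Geometric rank is upper semi-continuous under specialization, so $\grk(F \bmod \mf) \le \grk(F)$, and Theorem \ref{thm:ark-grk} applied over the residue field produces a partition-rank decomposition over that finite field. The challenge is to lift this decomposition back to $K$ \emph{without} the logarithmic loss that is unavoidable over finite fields, and with the specific linear constant $(2^{d-1}-1)(d-1)^r$ promised by part (1).

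The main obstacle is producing this linear constant with the correct dependence on $r$. The exponent $(d-1)^r$ strongly suggests an induction on transcendence degree: represent $K$ as a finite extension of $k_0(t_1,\ldots,t_{r-1})$, where $k_0$ is the prime field, and specialize the $t_i$ one at a time, each specialization chosen generically so that geometric rank is preserved and a partition-rank decomposition can be pulled back at the cost of a multiplicative $(d-1)$ factor — the $d-1$ reflecting the degree of a discriminant or resultant governing the bad-specialization locus. The base case over $\bar k_0$ should yield the remaining $(2^{d-1}-1)$ factor by an explicit peeling argument on $S_F$: find a generic point, use it to split off a single partition-rank-one summand from $F$, and induct on $d$; iterating this over the $2^{d-1}-1$ nontrivial bipartitions of $[d]$ is the natural source of that constant. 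Coordinating these specializations with the asymptotic characterization of geometric rank promised in \S 4 (which should relate $|S_F|$ over large finite residue fields to $\grk(F)$), while avoiding the logarithmic loss inherent to finite-field arguments, is where I expect the bulk of the delicate work to sit.
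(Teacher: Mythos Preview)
Your stated plan --- compose Theorem \ref{thm:ark-grk} with an already-known $\prk$-versus-$\ark$ bound --- is exactly how the paper proceeds, but for infinite $K$ you then abandon it unnecessarily. The missing ingredient is Lemma \ref{lem:rk-sing}(2), cited from \cite{KLP}: for any infinite field, $\prk(F) \le (2^{d-1}-1)\,\ark(F)$. With this in hand the paper's proof of part (1) is two lines: Theorem \ref{thm:ark-grk}(1) gives $\ark(F)\le (d-1)^r\grk(F)$, and then Lemma \ref{lem:rk-sing}(2) gives the claimed bound. The factor $(2^{d-1}-1)$ comes entirely from the \cite{KLP} input, and the $(d-1)^r$ entirely from Theorem \ref{thm:ark-grk}; the induction on transcendence degree you correctly anticipate does occur, but inside the proof of Theorem \ref{thm:ark-grk}, not in the passage from it to Theorem \ref{thm:prk-grk}. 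Your specialization-and-lift scheme and the ``peeling over the $2^{d-1}-1$ bipartitions'' heuristic are both unnecessary detours here.

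For finite $K$, your proposed composition is circular within the paper's logic: the bound $\prk(F)\ll_d \ark(F)\log(\ark(F)+1)$ is Theorem \ref{thm:ark-prk}, which the paper derives \emph{from} Theorem \ref{thm:prk-grk}, not the reverse. What the paper actually cites from \cite{MZ} is the weaker Proposition \ref{prop:LR-stable}, which only applies once $q$ is large relative to the rank. One passes to an extension $\F_{q^l}$ with $l\ll_d\log(\grk(F)+1)$, applies Proposition \ref{prop:LR-stable} there (using $\ark(F_l)\le(d-1)\grk(F)$ from Theorem \ref{thm:ark-grk}(2)) to bound $\prk(F_l)$, and then descends via Lemma \ref{lem:rk-extensions} at the cost of a factor $l$. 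That factor $l$ is precisely the logarithm in part (2).
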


\begin{remark*}
    Note that there is no restriction on the characteristic in the above theorem.
\end{remark*}

All of our results will follow from a novel technical result relating geometric rank and another notion -- \emph{analytic rank}. For finite fields, analytic rank is an arithmetic version of geometric rank, introduced by Gowers and Wolf in \cite{GWol}. We  extend their definition to infinite fields. 

\begin{definition}[Analytic rank]\mbox{} 
\begin{enumerate}
    \item For $K=\F_q,$ the \emph{analytic rank} of $F$ is 
    \[
    \ark(F) =  \log_q \left( \frac{q^{(d-1)\dim V}}{|S_F(K)|} \right).
    \]
    \item For infinite $K,$ the \emph{analytic rank} is 
    \[
    \ark(F) = \codim_{V^{d-1}} \overline{S_F(K)},
    \]
    where the closure is taken with respect to the Zariski topology.
\end{enumerate}
\end{definition} 

We now state our main technical result, which is of independent interest. Informally, we show that varieties defined by systems of multi-linear equations are full of
rational points.

\begin{theorem}\label{thm:ark-grk}\mbox{}
    \begin{enumerate}
        \item For infinite $K$ we have $\ark(F) \le (d-1)^r\cdot  \grk(F),$
        where $r = r(K)$ is defined as in theorem \ref{thm:str-brk}.
        \item For finite $K$ we have $\ark(F) \le (d-1)\cdot \grk(F).$
    \end{enumerate}
\end{theorem}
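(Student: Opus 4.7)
My plan is as follows.

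For part (2) (finite fields), I would start from the Fourier identity $q^{-\ark(F)} = \bias(F) := \E_{x\in V^d}\psi(F(x))$ for a nontrivial additive character $\psi$ of $\F_q$; this follows from Fourier inversion applied to the defining expression for $|S_F(K)|$. The claim is equivalent to the point-count lower bound $|S_F(\F_q)| \ge q^{(d-1)(\dim V - \grk(F))}$. I would prove this by induction on $d$, using the Fourier decomposition $\bias(F) = \E_{v\in V}\bias(F_v)$ into biases of the $(d-1)$-multi-linear slices $F_v(y_1,\dots,y_{d-1}) := F(y_1,\dots,y_{d-1},v)$. Upper semicontinuity of fiber dimension, applied to the natural projection coming from argument symmetry of $\grk$, yields $\grk(F_v) \le \grk(F)$ for all $v$; applying the inductive hypothesis slice-by-slice then produces the desired bound, with the factor $(d-1)$ tracking the multi-linear degree of the defining equations.

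For part (1) (infinite fields), I first reduce to $K$ finitely generated over its prime field $K_0$, since $F$ involves only finitely many coefficients of $K$. Then I induct on $r = r(K)$.

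The base case $r=1$ comprises number fields (in characteristic $0$) and fields of transcendence degree at most $1$ over $\F_p$ (in characteristic $p$). Here one combines part (2) with a specialization/density argument: reducing $F$ modulo primes yields a form over $\F_q$ to which (2) applies, and Hensel-type lifting recovers $K$-rational points of $S_F$ whose Zariski closure has the required dimension. The ``asymptotic characterization of geometric rank'' developed in \S 4 --- presumably relating $\grk(F)$ to $\ark(F,\F_{q^k})/k$ in the limit $k\to\infty$ --- is the natural bridge between point counts and the geometric invariant.

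For the inductive step, write $K = K_1(t)$ with $r(K_1) = r-1$ and view $F$ as a family $\{F_{t_0}\}_{t_0\in K_1}$ of forms over $K_1$ by specializing $t$. Generically $\grk(F_{t_0})\le\grk(F)$, so by the inductive hypothesis $\ark(F_{t_0},K_1)\le (d-1)^{r-1}\grk(F)$. Assembling the $K_1$-rational points of $S_{F_{t_0}}$ across varying $t_0$ and interpolating in $t$ should produce $K$-rational points of $S_F$ whose Zariski closure has codimension at most $(d-1)^r\grk(F)$, the extra factor $(d-1)$ paying for the multi-linear degree along the $t$-direction. The main obstacle I anticipate is precisely this step: ensuring the $K_1$-rational Zariski closures fit together coherently as $t_0$ varies --- avoiding unexpected dimension loss from components defined only over algebraic extensions or from exceptional specialization loci --- and pinning down the factor $(d-1)$ sharply at each induction level. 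The asymptotic characterization from \S 4 is likely essential for making this bookkeeping rigorous.
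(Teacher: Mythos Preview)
Your approach to part~(2) has a genuine gap: the inequality $\grk(F_v)\le\grk(F)$ for all $v\in V$ is false. The fiber-dimension bound $\dim\pi^{-1}(v)\ge\dim X-\dim Y$ holds only over the image of each top-dimensional irreducible component of $X$, and the kernel variety is typically reducible. Concretely, take $V=K^n$ with $n\ge 2$, a nondegenerate bilinear form $B$ on $V$, and $F(x,y,z)=z_1 B(x,y)$. Then $S_F=\{(x,y):B(x,y)=0\}$ is a hypersurface, so $\grk(F)=1$; but whenever $v_1\ne 0$ the slice $F_v=v_1 B$ has $\grk(F_v)=n$. Your induction then yields only the trivial estimate, not $\ark(F)\le 2$. (Note too that if $\grk(F_v)\le\grk(F)$ \emph{did} hold for every $v$, the induction would actually give $\ark(F)\le\grk(F)$ with no factor $(d-1)$ at all --- a warning sign.)

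The paper's proof of (2) is entirely different and rather indirect. One extends $F$ trivially to $\F_q(t)$ and counts $\F_q[t]$-points of $S_F$ with entries of degree $<R$. The central tool is a \emph{scaling inequality}: for any multilinear map $G:H^{d-1}\to H'$ of finite abelian groups and any subgroup $H_0\le H$, one has $|\{G=0\}\cap H^{d-1}|\le[H:H_0]^{d-1}\,|\{G=0\}\cap H_0^{d-1}|$. Combined with the asymptotic identity $\liminf_{|A/\pf|\to\infty}\ark(F\otimes A/\pf)=\grk(F)$ for $A=\F_q[t]$, this converts the $\F_{q^e}$-points of a reduction modulo a degree-$e$ prime into many $\F_q[t]$-points of degree $<e/(d-1)$, which are automatically genuine zeros since the defining polynomials cannot wrap around. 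A further fiber estimate for the evaluation-at-$t{=}0$ map (again via the scaling inequality) then converts the resulting bound on these counts back into a lower bound on $|S_F(\F_q)|$.

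For part~(1) your inductive outline points in the right direction, but the same scaling mechanism --- not Hensel lifting --- is what makes it work; Hensel's lemma requires a smooth-point hypothesis that $S_F$ will not satisfy. The paper first uses Weil restriction to reduce to \emph{purely} transcendental extensions $\Q(t_1,\dots,t_s)$ or $\F_q(t_1,\dots,t_r)$, then runs the induction on the number of variables via height-bounded point counts, applying the scaling inequality at each step to pass from solutions modulo a prime in one transcendental variable to small-height integral solutions. Noether normalization finally converts the point count into the required bound on $\codim\overline{S_F(K)}$.
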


We will explain how this result implies theorems \ref{thm:str-brk}, \ref{thm:ark-prk} and \ref{thm:prk-grk} in \S 3. 

\subsection{Applications for the Hardy-Littlewood circle method}

Let $\underline{f} = (f_1,\ldots,f_s) $ be a collection of homogeneous forms of degree $d$ in $K[x_1,\ldots,x_n].$ All of the results we will discuss in this subsection apply also for systems of forms of varying degrees, but we stick to one fixed degree for clarity of exposition. We now need Birch rank for tuples of forms.

\begin{definition}[Birch rank for tuples]
    The \emph{Birch rank} of $\underline{f}$ is
    \[
    \Brk(\underline{f}) = \codim_{\A^n} (x:\rk(J(x)) < s),
    \]
    where $J(x)$ is the matrix of partial derivatives.
\end{definition}

When $K$ is a number field or a function field, we adopt Schmidt's terminology from \cite{Sch} and say that $\underline{f}$ is a \emph{Hardy-Littlewood system}, or HLS for short, when the number of common integer zeroes of $\underline{f}$ obeys the asymptotic formula predicted by the Hardy-Littlewood circle method. 
\begin{example}
    Let $K= \Q.$ If $\underline{f}$ is an HLS, then
    \[
    \{x\in [-P, P]^n\cap \Z^n: f_i(x) = 0\text{ for all }1\leq i\leq s\} = \mathfrak J\mathfrak S P^{n-ds} + o(P^{n-ds}),
    \]
    where $\mathfrak{J},\mathfrak{S}$ are the \textit{singular integral} and \textit{singular series} respectively. We note that $\mathfrak{J}\mathfrak{S} > 0$ whenever the vanishing locus of $\underline{f}$ contains a nonsingular rational point. For further details on the case $K = \Q$ see \cite[Section 3]{Sch}; for the case of a general number field see \cite[Lemma 6]{Ski}.
\end{example}
The following result was first proved by Birch for $K=\Q$ \cite{Bir} and then extended by Skinner to number fields \cite{Ski} and Lee to function fields \cite{Lee}. 

\begin{theorem}[Birch, Skinner, Lee]\label{thm:Birch}
    Let $K$ be a number field or a function field $\F_q(t)$ with characteristic $>d.$ If $\Brk(\ul{f}) > s(s+1)(d-1)2^{d-1}$ then $\ul{f}$ is a HLS.
\end{theorem}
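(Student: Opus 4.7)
The plan is to follow the Hardy-Littlewood circle method, which is the standard approach for results of this type. Working first over $K=\Q$, I would apply Fourier orthogonality to write
\[
N(P) := \#\{x\in [-P,P]^n\cap \Z^n : \underline{f}(x) = 0\} = \int_{[0,1]^s} S(\alpha)\, d\alpha,
\]
where $S(\alpha) = \sum_{x\in [-P,P]^n\cap \Z^n} e(\alpha_1 f_1(x) + \cdots + \alpha_s f_s(x))$ and $e(t) = e^{2\pi i t}$. The target asymptotic is $N(P) = \mathfrak{J}\mathfrak{S}\, P^{n-ds} + o(P^{n-ds})$, so the strategy is to dissect $[0,1]^s$ into \emph{major arcs} $\mathfrak{M}$ (tuples close to rationals $a/q$ with $q \le P^{\theta}$ for suitable small $\theta$) and \emph{minor arcs} $\mathfrak{m}$, and to identify the major arc integral as the main term while showing the minor arc integral is negligible.

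On the major arcs, $S(\alpha)$ is approximated by a product of a complete exponential sum modulo $q$ (which assembles into the singular series $\mathfrak{S}$) and a smooth integral approximation (which assembles into the singular integral $\mathfrak{J}$), so this step is essentially bookkeeping once one has sufficiently strong error control. The heart of the proof, and the main obstacle, is the minor arc bound. I would apply Weyl differencing $d-1$ times to $|S(\alpha)|^{2^{d-1}}$, reducing the problem to counting integer tuples $(h_1,\ldots,h_{d-1}) \in [-P,P]^{n(d-1)}$ for which the symmetric multilinear form $\Phi_{\underline{f}}(h_1,\ldots,h_{d-1},\cdot)$ associated to $\underline{f}$ is pointwise small. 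The variety where this multilinear form degenerates is controlled by the Jacobian locus $\{x : \rk J(x) < s\}$, so the hypothesis $\Brk(\underline{f}) > s(s+1)(d-1)2^{d-1}$ provides exactly the dimension bound needed to extract power savings via a standard geometry-of-numbers count. The precise exponent $s(s+1)(d-1)2^{d-1}$ emerges from tracking codimension through the $d-1$ differencing steps and balancing against the major/minor arc threshold $\theta$.

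To handle general number fields one passes to the archimedean completion $K \otimes_\Q \R$ using an integral basis of $\Oc_K$, reducing the necessary estimates to the $\Q$-case component by component; this is Skinner's extension. For function fields $\F_q(t)$ with $\char > d$, one instead carries out the entire circle method inside the local field $\F_q((t^{-1}))$, with Haar measure in place of Lebesgue as in Lee's work; the characteristic hypothesis is needed precisely so that symmetrization -- and hence the passage between $\underline{f}$ and its associated multilinear form -- is non-degenerate. In every case the qualitative picture is the same: the Birch rank hypothesis feeds directly into the minor arc bound through the codimension of the Jacobian locus, and the rest is structural.
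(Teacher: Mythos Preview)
The paper does not prove this theorem; it is stated as a background result and attributed directly to Birch \cite{Bir}, Skinner \cite{Ski}, and Lee \cite{Lee}. There is therefore no proof in the paper to compare your proposal against. Your outline is a reasonable high-level sketch of the classical circle method argument as carried out in those cited works: the major/minor arc decomposition, the $(d-1)$-fold Weyl differencing reducing the minor arc bound to a count of near-singular tuples, and the use of the Birch rank hypothesis to control the dimension of the Jacobian locus are all the standard ingredients. If you intend this as an actual proof rather than a pointer to the literature, be aware that the sketch glosses over substantial technical work (the precise lattice-point/geometry-of-numbers estimate after differencing, the convergence of $\mathfrak{S}$ and $\mathfrak{J}$, and the adaptation to $\F_q((t^{-1}))$), but as a summary of where to find the argument and how it is structured, it is accurate.
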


Cook and Magyar \cite{CM-primes} proved the striking result that for $K=\Q$ and $d\ge 2$ a similar hypothesis implies an asymptotic for the number of integer solutions whose entries are all \emph{prime} numbers, with an appropriately modified asymptotic formula (For $d=1$ they require a different notion of rank, since for example the smooth linear form $x_1$ has no zeros with all entries prime, no matter how large $n$ is). When such a formula holds, we call $\ul{f}$ a \emph{prime HLS}.

\begin{theorem}[Cook-Magyar]\label{thm:prime-sols}
    Let $K=\Q$ and $d\ge 2.$ There exists a constant $\alpha(d,s)$ such that if $\Brk(\ul{f}) > \alpha(d,s)$ then $\ul{f}$ is a prime HLS.
\end{theorem}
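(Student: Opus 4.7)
The plan is to run the Hardy-Littlewood circle method for prime-restricted solutions, following Birch's framework but with the constant weight replaced by a product of von Mangoldt functions. I would form the exponential sum
\[
S(\a) = \sum_{x \in [1,P]^n \cap \Z^n} \Lambda(x_1)\cdots\Lambda(x_n)\, e\!\left(\sum_{j=1}^s \a_j f_j(x)\right),
\]
with $\a = (\a_1,\ldots,\a_s) \in [0,1]^s$, observing that $\int_{[0,1]^s} S(\a)\, d\a$ computes the weighted count of prime solutions. Partitioning $[0,1]^s$ into a set of major arcs $\mathfrak M$ (rational approximations with small denominator and short arcs around them) and its complement $\mathfrak m$, the asymptotic follows once I show that $\int_{\mathfrak M} S\,d\a$ produces the predicted prime singular series and singular integral, and that $\int_{\mathfrak m}|S|\,d\a$ is of lower order.

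On the major arcs the approach is essentially mechanical. After a $W$-trick to sieve out the small prime divisors of the $x_i$, I would approximate $\Lambda$ by its mean value on arithmetic progressions modulo small $q$, reducing to Birch's major arc computation on each residue class. The main term factors as a local product, with the singular series admitting the standard modification that accounts for restriction to primes. The minor arc estimate is the hard part, since the $\Lambda$ weights preclude direct application of Birch's Weyl-type inequality. I would therefore decompose $\Lambda$ by Vaughan's identity (or Heath-Brown's identity), applied coordinate by coordinate, converting the task into bounding a bounded number of Type I and Type II bilinear sums.

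Type I sums carry a smooth coefficient in one of the variables $x_i$ and can be handled by absorbing that variable into Birch's original Weyl-type bound; the decay they inherit is controlled in terms of $\Brk(\ul f)$ alone. The principal obstacle is the Type II contribution, whose model is a bilinear sum
\[
\sum_{m \sim M}\sum_{k \sim K} a_m b_k\, e\!\left(\a\cdot \ul f(mk, x_2, \ldots, x_n)\right)
\]
summed also over the remaining variables. I would apply Cauchy-Schwarz in the variables not coupled to $a_m$ to square out the unknown weight, then perform a Weyl-style differencing in $k$, exposing a family of multi-linear exponential sums whose cancellation is governed by $\Brk(\ul f)$ via Birch's inequality. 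The threshold $\alpha(d,s)$ must be chosen large enough that the resulting power saving beats the $\log P$ losses inherent to prime detection, once integrated over the volume of $\mathfrak m$. Establishing this uniform saving across all relevant bilinear ranges $(M,K)$ is the delicate core of the argument; with it in hand, the major and minor arc contributions combine in the standard way, and positivity of the main term follows from the existence of nonsingular local prime solutions, which is itself guaranteed once $\Brk(\ul f)$ exceeds $\alpha(d,s)$.
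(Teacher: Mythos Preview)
The paper does not prove this statement at all: Theorem~2.6 is quoted as a result of Cook and Magyar \cite{CM-primes} and is used as a black box (together with Lemma~2.9) to deduce the Schmidt-type consequence in Proposition~2.10(2). There is therefore no ``paper's own proof'' to compare your proposal against.

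Your sketch is a reasonable high-level summary of the circle-method strategy one expects for prime solutions (von Mangoldt weights, $W$-trick, Vaughan/Heath-Brown decomposition, Cauchy--Schwarz plus Weyl differencing on the Type~II sums), and it is roughly in the spirit of what Cook and Magyar do. But since the present paper neither attempts nor needs such an argument, your proposal is answering a different question than the paper poses. If your aim was to supply what the paper provides for this theorem, the correct response is simply to cite \cite{CM-primes}; if your aim was to reconstruct Cook--Magyar's proof, be aware that the genuinely hard content---uniform minor-arc savings across all bilinear ranges, handling the combinatorics of applying the decomposition in every coordinate, and the precise mean-value inputs needed---is glossed over in your outline and is where all the work lies.
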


We say that the above results are of \emph{Birch-type}, since the hypothesis is a lower bound on the Birch rank.

In a celebrated result \cite{Sch}, Schmidt extended Birch's original theorem to the setting where we assume only a lower bound on the strength of $\ul{f}.$ To state Schmidt's result, we first define strength for tuples of forms.

\begin{definition}[Strength for tuples]
    The \emph{strength} of $\ul{f}$ is 
    \[
    \str(\ul{f}) = \min\left\{\str(a_1f_1+\ldots+a_sf_s): a_1,\ldots,a_s\in K \textnormal{ not all zero}\right\},
    \]
    where the zero form has strength zero.
\end{definition}

Schmidt's result is as follows. 

\begin{theorem}[Schmidt]\label{thm:Schmidt}
    Let $K=\Q.$ There exists a constant $\beta = \beta(d)$ such that if $\str(\ul{f}) \ge \beta s^2$ then $\ul{f}$ is a HLS.
\end{theorem}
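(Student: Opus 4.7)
The plan is to deduce Theorem~\ref{thm:Schmidt} from Birch's theorem (Theorem~\ref{thm:Birch}) by invoking Theorem~\ref{thm:str-brk} to turn the strength hypothesis on $\ul{f}$ into a Birch-rank hypothesis. Concretely, the target is an estimate of the shape $\Brk(\ul{f}) \ge \str(\ul{f})/C(d) - O_d(s)$, so that choosing $\beta = \beta(d)$ large enough makes $\str(\ul{f}) \ge \beta s^2$ force $\Brk(\ul{f}) > s(s+1)(d-1)2^{d-1}$, at which point Theorem~\ref{thm:Birch} gives the HLS conclusion.

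The bridge between the Birch rank of the tuple and those of its individual linear combinations is the incidence variety
\[
I = \bigl\{(x,[a]) \in \A^n \times \P^{s-1} : a_1\nabla f_1(x) + \cdots + a_s\nabla f_s(x) = 0\bigr\},
\]
defined over $\Q$. The projection $\pi_1\colon I \to \A^n$ surjects onto the rank-drop locus $V = \{x : \rk J(x) < s\}$---since $x \in V$ is exactly the condition that the gradients $\nabla f_i(x)$ admit a nontrivial linear relation---giving $\dim I \ge \dim V = n - \Brk(\ul{f})$. The projection $\pi_2\colon I \to \P^{s-1}$ is surjective, because $0 \in \pi_2^{-1}([a])$ for every $[a]$ (each $f_a$ being homogeneous of degree $\ge 2$), and $\pi_2^{-1}([a])$ is the critical locus of $f_a$, of dimension $n - \Brk(f_a)$. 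The fiber-dimension inequality $\dim I \le (s-1) + \max_{[a]} \dim\pi_2^{-1}([a])$ then yields the key comparison
\[
\min_{[a] \in \P^{s-1}(\bar\Q)} \Brk(f_a) \le \Brk(\ul{f}) + (s-1).
\]

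Fix $a^*$ attaining this minimum, and apply Theorem~\ref{thm:str-brk}(1) over $\bar\Q$: this is permitted because $\bar\Q/\Q$ is algebraic, so $r(\bar\Q) = 1$ and $C(d,\bar\Q)$ depends only on $d$. This gives $\str_{\bar\Q}(f_{a^*}) \le C(d)\bigl(\Brk(\ul{f})+(s-1)\bigr)$. A Galois-trace descent---averaging $f_{a^*}$ over the Galois orbit of $a^*$, using subadditivity of strength and the stability of strength of $\Q$-forms under algebraic extensions in characteristic zero---then produces a nonzero $b \in \Q^s$ whose $\Q$-strength is controlled by the same bound up to an $O_d(1)$ factor. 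Since $\str(\ul{f}) \le \str_\Q(f_b)$ by the definition of tuple strength, the target estimate follows, and Theorem~\ref{thm:Birch} closes the argument.

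The hard part is the Galois-descent step, in particular bounding the Galois-orbit size of $a^*$ in a way that does not reintroduce unwanted $s$-dependence into the final constant $\beta(d)$. The fix is to choose $a^*$ as a closed point of the $\Q$-subvariety $W = \{[a] : \Brk(f_a) \le \Brk(\ul{f})+(s-1)\} \subseteq \P^{s-1}$, whose defining equations are determinantal conditions on $J(x)$ with degrees controlled by $d$; intersecting $W$ with sufficiently general $\Q$-hyperplanes produces such a closed point of bounded degree, keeping $[\Q(a^*):\Q]$ under control. Modulo these dimension-count and descent technicalities, the proof is a transparent combination of Theorem~\ref{thm:str-brk} with Birch's theorem.
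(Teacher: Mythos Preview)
Your overall strategy---the incidence variety giving $\min_{[a]}\Brk(f_a)\le\Brk(\ul f)+s-1$, followed by Galois descent to a rational coefficient vector, then Theorem~\ref{thm:str-brk} and Theorem~\ref{thm:Birch}---is exactly the paper's route through Lemma~\ref{lem:str-brk-tuple}, and the incidence-variety step matches inequality~\eqref{eq:brk-tuple}. The gap is in the descent. Your claim that $W=\{[a]:\Brk(f_a)\le\Brk(\ul f)+s-1\}$ is cut out by equations of degree $O_d(1)$ is not justified: $W$ is defined by a fiber-dimension condition on $\pi_2$, and while semicontinuity makes it closed in $\P^{s-1}$, its degree generally depends on $n$ and $s$, not only on $d$. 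Intersecting with general $\Q$-hyperplanes therefore yields closed points of degree bounded by $\deg W$, not by $O_d(1)$, so there is no reason the Galois orbit of $a^*$ should have $O_d(1)$ elements, and your target $\str(\ul f)\le C(d)(\Brk(\ul f)+O_d(s))$ does not follow. A secondary issue: the ``stability of strength of $\Q$-forms under algebraic extensions'' you invoke is not elementary---it is itself essentially a corollary of Theorem~\ref{thm:str-brk}. The paper sidesteps this by descending \emph{Birch rank}, which is geometric and field-insensitive, and only afterward applying Theorem~\ref{thm:str-brk} over~$\Q$.

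The paper's descent simply accepts a factor of $s$: the Galois conjugates of $f_{a^*}$ lie in the $s$-dimensional space $\textnormal{sp}_{\bar\Q}(f_1,\dots,f_s)$, so a $\Q$-basis element $f_b$ of their span is a sum of at most $s$ conjugates, giving $\Brk(f_b)\le s\cdot\Brk(f_{a^*})$ and hence $\str(\ul f)\le C(d)\,s\,(\Brk(\ul f)+s-1)$. Combined with Theorem~\ref{thm:Birch} this yields only the cubic threshold $\gamma(d)s^3$. The quadratic bound of Theorem~\ref{thm:Schmidt} is recovered not by sharpening the descent but by replacing Birch's quadratic-in-$s$ hypothesis with the Dietmann--Schindler refinement, whose threshold is linear in $s$; the paper notes this explicitly in the remark following its proposition on HLS consequences.
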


Recalling the easy inequality $\Brk(\ul{f}) \le 2\str(\ul{f}),$ we see that Schmidt's theorem implies Birch's theorem, albeit with a bound which has worse dependence on $d.$ The true utility of Schmidt's result, however, lies in the fact that forms not satisfying the hypotheses can by definition be decomposed into forms of lower degree. This inductive process allowed him to deduce results about asymptotic counts of integer zeroes for \emph{arbitrary} systems of forms. As we mentioned earlier, Schmidt did \emph{not} deduce his result from Birch's theorem by showing that $\str(f)$ is bounded above by some function of $\Brk(f),$ but instead by a novel alternative argument. We call theorem \ref{thm:Schmidt} a \emph{Schmidt-type} result, since the hypothesis is a lower bound on strength.

Schmidt's argument has proven difficult to adapt to other settings, and to our knowledge there are no further Schmidt-type results in the literature. Both Skinner \cite{Ski} and Cook-Magyar \cite{CM-primes} speculate about the possibility of obtaining Schmidt-type results in their settings and discuss the challenges this would present. 

Theorem \ref{thm:str-brk} directly implies that all of the Birch-type results above yield a Schmidt-type result with a comparable bound. Before stating this, we need an extension for tuples of forms.  

\begin{lemma}\label{lem:str-brk-tuple}
    When $K$ has characteristic zero we have
    \[
    \str(\ul{f}) \le Cs\cdot   (\Brk(\ul{f})+s-1),
    \]
    where $C=C(d,K)$ is the constant of theorem \ref{thm:str-brk}.
\end{lemma}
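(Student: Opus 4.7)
The plan is to reduce to a single-form case. We find $a\in K^s\setminus\{0\}$ with $g_a:=\sum_i a_i f_i\ne 0$ and $\Brk(g_a)\le s(\Brk(\ul f)+s-1)$; then Theorem~\ref{thm:str-brk}(1) yields $\str(g_a)\le C(d,K)\Brk(g_a)$, and $\str(\ul f)\le\str(g_a)$ by the definition of strength for tuples. (We may assume the $f_i$ are $K$-linearly independent; otherwise passing to a spanning subtuple gives the result inductively on $s$.)

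To construct $a$, consider the incidence variety
\[
Z\ =\ \{(a,x)\in\P^{s-1}\times\A^n : J(x)^T a=0\}\ =\ \{(a,x):\nabla g_a(x)=0\},
\]
where $J$ is the Jacobian matrix of $\ul f$. The projection $\pi_2: Z\to\A^n$ has image $V_{\ul f}$ with fibers $\P(\ker J(x)^T)$ of nonnegative dimension, so $\dim Z\ge\dim V_{\ul f}$. The projection $\pi_1: Z\to\P^{s-1}$ is surjective (via the section $a\mapsto(a,0)$, since $d\ge 2$), with fiber $U_a=\{x:\nabla g_a(x)=0\}$ at $a$. Applying the fiber-dimension theorem to a top-dimensional component of $Z$ yields an $a^*\in\P^{s-1}(\bar K)$ with $\dim U_{a^*}\ge\dim V_{\ul f}-(s-1)$, i.e., $\Brk_{\bar K}(g_{a^*})\le\Brk(\ul f)+s-1$.

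The main obstacle is descending $a^*$ to $K$, since a $\bar K$-point of the ``good'' locus need not be $K$-rational. Let $L=K(a^*)$ have degree $\ell$ over $K$; since $\char K=0$ the extension is separable, with $K$-embeddings $\sigma_1,\ldots,\sigma_\ell$ into $\bar K$. By non-degeneracy of the trace pairing some $c\in L$ makes $a:=\mathrm{Tr}_{L/K}(ca^*)\in K^s$ nonzero, and $g_a\ne 0$ by linear independence of the $f_i$. Writing $g_a=\sum_j \sigma_j(c)\,g_{\sigma_j(a^*)}$, we have $U_a\supseteq\bigcap_j U_{\sigma_j(a^*)}$, and Krull's height theorem gives
\[
\Brk(g_a)\ \le\ \sum_j\Brk(g_{\sigma_j(a^*)})\ =\ \ell\cdot\Brk(g_{a^*})\ \le\ \ell(\Brk(\ul f)+s-1).
\]
Bounding $\ell\le s$ is the delicate step: one selects $a^*$ in a suitable $K$-irreducible component of the ``good'' locus $\{a:\dim U_a\ge\dim V_{\ul f}-(s-1)\}\subseteq\P^{s-1}$, which is a $K$-defined closed subvariety by upper semicontinuity of fiber dimension, and whose residue-field degree is controlled by a Bezout-type argument on its defining polynomials. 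Applying Theorem~\ref{thm:str-brk}(1) to $g_a$ then completes the proof.
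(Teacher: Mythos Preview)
Your overall strategy matches the paper's: find $a^*\in\P^{s-1}(\bar K)$ with $\Brk(g_{a^*})\le\Brk(\ul f)+s-1$ via an incidence/fiber-dimension argument, then descend to a $K$-rational coefficient vector while losing at most a factor of $s$ in the Birch rank. The first step is fine and equivalent to the paper's union decomposition.

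The gap is in your descent. You need $\ell=[K(a^*):K]\le s$, and you wave at a ``Bezout-type argument on the defining polynomials'' of the good locus $\{a:\dim U_a\ge\dim V_{\ul f}-(s-1)\}\subset\P^{s-1}$. But the equations cutting out this locus come from fiber-dimension conditions on the family $U_a=\{x:\nabla g_a(x)=0\}$; their degrees depend on $d$ and $n$, not on $s$. A Bezout bound on the residue-field degree of a closed point of this locus therefore gives nothing like $\ell\le s$, and there is no reason to expect $[K(a^*):K]$ to be bounded independently of the forms $f_i$.

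The paper sidesteps this entirely. Rather than bounding $[K(a^*):K]$, it observes that the $\bar K$-span $\mathcal F$ of \emph{all} Galois conjugates $\sigma(g_{a^*})$ sits inside $\mathrm{sp}_{\bar K}(f_1,\ldots,f_s)$, hence $\dim\mathcal F\le s$. Being Galois-stable, $\mathcal F$ descends to $K$ and admits a basis of forms $g_b$ with $b\in K^s$. Any such $g_b$ is then a $\bar K$-linear combination of at most $s$ conjugates $\sigma_i(g_{a^*})$ (since $\dim\mathcal F\le s$), each with Birch rank equal to $\Brk(g_{a^*})$, giving $\Brk(g_b)\le s\cdot\Brk(g_{a^*})$ by subadditivity. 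The key point is that the bound $s$ comes from the dimension of the ambient linear span, not from the arithmetic degree of $a^*$. Your trace construction can be repaired along these lines: replace the sum over the $\ell$ embeddings of $K(a^*)$ by an expression in terms of a spanning set of $\mathcal F$ of size $\le s$.
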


\begin{proof}
    This follows by an argument which we learned from A. Polishchuk. First, note that
    \[
    \{x\in \overline{K}^n: \rk (J(x)) < s\} = \bigcup_{\underline{a}\in \P_{\overline{K}}^{s-1}} \{x\in \overline{K}^n: \nabla (a_1f_1+\ldots+a_sf_s)(x) = 0\}.
    \]
    Consequently, we have
    \begin{equation}\label{eq:brk-tuple}
         \min_{\underline{a}\in \P_{\overline{K}}^{s-1}} \Brk(a_1f_1+\ldots+a_sf_s) \le \Brk(\ul{f}) + s-1.
    \end{equation}    
    Now, suppose the minimum is obtained for some $0\neq \ul{a}\in  \overline{K}^s$ and write $\eta = \Brk(\ul{a}\cdot \ul{f}).$ We claim that there exists $0\neq \ul{b}\in  K^s$ with $ \Brk(\ul{b}\cdot \ul{f}) \le s\eta.$ Theorem \ref{thm:str-brk} and inequality \eqref{eq:brk-tuple} then imply
    \[
    \str(\ul{b}\cdot \ul{f}) \le C\cdot s\eta \le Cs \cdot   (\Brk(\ul{f})+s-1),
    \]
    completing the proof. To prove the claim about the existence of such a $\ul{b},$ let $\mathcal{F}$ be the subspace of $\textnormal{sp}_{\overline{K}}(f_1,\ldots,f_s)$ spanned by the Galois conjugates of $\ul{a}\cdot \ul{f}.$ By Galois descent, this has a basis consisting of forms in $\textnormal{sp}_{K}(f_1,\ldots,f_s).$ Let $\ul{b}\cdot \ul{f}$ be one of these basis elements. Since $\dim (\mathcal{F}) \le s,$ we can write $\ul{b}\cdot \ul{f} = \sum_{i=1}^s \beta_i \sigma_i (\ul{a}\cdot \ul{f})$ where $\beta_i\in \overline{K}$ and the $\sigma_i$ are field automorphisms. Clearly we have $\Brk(\sigma_i (\ul{a}\cdot \ul{f})) = \Brk(\ul{a}\cdot \ul{f}) = \eta$ for all $1\le i\le s.$ Birch rank is subadditive, so we get $\Brk(\ul{b}\cdot \ul{f}) \le s\eta$ as claimed.
\end{proof}
    
    Now we can convert the above Birch-type results to Schmidt-type results. In particular, this yields a new proof of Schmidt's theorem \ref{thm:Schmidt}.
    
\begin{prop}
    There exists a constant $\gamma = \gamma(d)$ such that the following holds:
    \begin{enumerate}
        \item If $K$ is a number field and $\str(\ul{f}) > \gamma \cdot s^3$ then $\ul{f}$ is a HLS.
        \item Suppose that $K=\Q$ and $d\ge 2.$ If  $\str(\ul{f}) > \gamma \cdot \alpha(d,s)$ then $\ul{f}$ is a prime HLS, where $\alpha(d,s)$ is any constant such that theorem \ref{thm:prime-sols} holds. 
        \item Suppose that $K = \F_q(t)$ has characteristic $>d$ and that $s=1.$ If $\str(f) > \gamma$ then $f$ is a HLS. 
    \end{enumerate}
\end{prop}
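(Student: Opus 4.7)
The plan is to reduce each claim to the corresponding Birch-type theorem by converting the hypothesis on strength into a hypothesis on Birch rank. For $s > 1$ the bridge is the contrapositive of lemma~\ref{lem:str-brk-tuple}, while for $s = 1$ it is just theorem~\ref{thm:str-brk} applied to the single form. In every case the field $K$ satisfies $r(K) = 1$, so the constants $C(d, K)$ appearing in these two results depend only on $d$: for a number field, $\trdeg(K/\Q) = 0$; for $K = \F_q(t)$ with $\char K > d$, $\trdeg(K/\F_p) = 1$.

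For part (3), where $s = 1$, the tuple-strength and tuple-Birch rank of $\ul{f}$ coincide with $\str(f)$ and $\Brk(f)$. Theorem~\ref{thm:str-brk}(1) gives $\str(f) \le C(d) \cdot \Brk(f)$, so any $\gamma \ge 2 C(d) (d-1) 2^{d-1}$ ensures that $\str(f) > \gamma$ forces $\Brk(f) > 2 (d-1) 2^{d-1}$, the $s = 1$ case of the Birch--Lee bound in theorem~\ref{thm:Birch}; this concludes the HLS assertion. For part (1), applying the contrapositive of lemma~\ref{lem:str-brk-tuple} with $B = s(s+1)(d-1) 2^{d-1}$ shows that $\str(\ul{f}) > C(d) \cdot s \cdot (s(s+1)(d-1) 2^{d-1} + s - 1)$ forces $\Brk(\ul{f}) > B$, and theorem~\ref{thm:Birch} then yields the HLS conclusion. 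Since the expression on the left is $O_d(s^3)$, there is some $\gamma = \gamma(d)$ for which $\str(\ul{f}) > \gamma s^3$ suffices. Part (2) is the same argument with theorem~\ref{thm:prime-sols} in place of theorem~\ref{thm:Birch}: $\str(\ul{f}) > C(d) \cdot s \cdot (\alpha(d,s) + s - 1)$ forces $\Brk(\ul{f}) > \alpha(d,s)$ and hence produces a prime HLS; since any enlargement of $\alpha$ remains admissible in theorem~\ref{thm:prime-sols}, we may replace $\alpha(d, s)$ by $\max(\alpha(d,s), s^2)$, after which the quantity $C(d) \cdot s \cdot (\alpha(d,s) + s - 1)$ is bounded by $\gamma(d) \cdot \alpha(d,s)$ for suitable $\gamma(d)$.

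There is no serious mathematical obstacle beyond what is already present in the lemma and theorem~\ref{thm:str-brk}; the proof is essentially bookkeeping. The only mild subtlety is tracking the polynomial-in-$s$ factors introduced by lemma~\ref{lem:str-brk-tuple} and confirming that they are absorbed by the $s$-growth of the Birch-type bounds $s(s+1)(d-1) 2^{d-1}$ and $\alpha(d, s)$, which is why the final constant $\gamma$ is allowed to depend on $d$ alone.
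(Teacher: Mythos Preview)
Your approach is exactly the paper's: convert the strength hypothesis into a Birch-rank hypothesis via lemma~\ref{lem:str-brk-tuple} (or theorem~\ref{thm:str-brk} directly when $s=1$), and then apply theorem~\ref{thm:Birch} or~\ref{thm:prime-sols}. The paper's own proof is the single sentence ``combine theorems~\ref{thm:Birch} and~\ref{thm:prime-sols} with lemma~\ref{lem:str-brk-tuple}''; your write-up supplies the bookkeeping, and for parts~(1) and~(3) it is correct.

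Your treatment of part~(2), however, has a slip. After replacing $\alpha$ by $\max(\alpha,s^{2})$ you claim that $C(d)\cdot s\cdot(\alpha+s-1)\le\gamma(d)\cdot\alpha$. With $\alpha\ge s^{2}$ one does get $\alpha+s-1\le 2\alpha$, so the left side is at most $2C(d)\cdot s\cdot\alpha$; but the remaining factor of $s$ cannot be bounded by a constant depending only on $d$ (using $s\le\sqrt{\alpha}$ gives only a bound of order $\alpha^{3/2}$, not $\gamma\alpha$). Moreover, enlarging $\alpha$ strengthens the hypothesis $\str(\ul f)>\gamma\alpha$, so even if the inequality held you would have established (2) only for the enlarged constant, not for an arbitrary admissible $\alpha(d,s)$. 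The paper's one-line proof does not spell this point out either; what lemma~\ref{lem:str-brk-tuple} actually delivers is the threshold $C(d)\cdot s\cdot(\alpha(d,s)+s-1)$, and that extra factor of $s$ has to be accounted for somewhere in the statement.
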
 

\begin{proof}
This follows immediately by combining theorems \ref{thm:Birch} and \ref{thm:prime-sols} with lemma \ref{lem:str-brk-tuple}.
\end{proof}

\begin{remark*}
    Two remarks are in order here. The first is that we get a result over function fields only for a single form. This is due to the absence of a suitable analogue of lemma \ref{lem:str-brk-tuple}. The second is that the above proposition reproves Schmidt's theorem \ref{thm:Schmidt} but with a cubic dependence on $s$ rather than quadratic. However, the quadratic dependence can also be recovered by using a refinement of Birch's theorem due to Dietmann \cite{Diet} and independently Schindler \cite{Schin}. We do not give a proof of this here, but the interested reader should have no difficulty filling in the details.
\end{remark*} 

\section{Reduction to theorem \ref{thm:ark-grk} and proof sketch}

In this section we assume theorem \ref{thm:ark-grk} holds and use it to deduce our other results: theorems \ref{thm:str-brk}, \ref{thm:ark-prk} and \ref{thm:prk-grk}. Then we discuss the plan of the proof of theorem \ref{thm:ark-grk}.

\subsection{Proof of theorem \ref{thm:str-brk}} We begin by explaining how theorem \ref{thm:str-brk} follows from its multi-linear analogue theorem \ref{thm:prk-grk}. Suppose that $V$ is a finite-dimensional vector space over $K$ and that $f\in \text{Sym}^dV.$ There is an associated symmetric multi-linear form $\tilde f:V^d\to  K$ given by 
$$\tilde f (h_1,\ldots,h_d) = \nabla_{h_1}\ldots\nabla_{h_d} f(0),$$
which satisfies $\tilde f(x,\ldots,x) = d! f(x).$ It is not difficult to see that the strength of $f$ and the partition rank of $\tilde f$ are closely related.

\begin{lemma}[\texorpdfstring{\cite[claim 3.2]{LZ}}{[LZ24a, claim 3.2]}]\label{lem:polar}
    If $K$ has characteristic zero or $>d$ then
    \[
    \str(f) \le \prk(\tilde{f}) \le \binom{d}{\lfloor d/2 \rfloor} \str(f).
    \]
\end{lemma}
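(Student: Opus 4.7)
The plan is to exploit polarization and diagonal restriction, which are inverse operations up to the factor $d!$. For the lower bound, I would start from any partition-rank decomposition $\tilde f = \sum_{i=1}^r G_i\cdot H_i$ with each summand of partition rank one, where $(G_i,H_i)$ corresponds to a non-trivial split $I_i\sqcup([d]\setminus I_i)$ with $1\le |I_i|\le d-1$. Restricting to the diagonal gives
\[
d!\,f(x) = \tilde f(x,\ldots,x) = \sum_{i=1}^r g_i(x)\,h_i(x),
\]
where $g_i(x) := G_i(x,\ldots,x)$ and $h_i(x) := H_i(x,\ldots,x)$ are forms of degrees $|I_i|$ and $d-|I_i|$, both strictly between $0$ and $d$. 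The hypothesis $\char(K)=0$ or $\char(K)>d$ makes $d!$ invertible in $K$, so dividing through yields a strength decomposition of $f$ into at most $r$ summands, proving $\str(f)\le \prk(\tilde f)$.

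For the upper bound, I would polarize a strength decomposition. Fix $f = \sum_{i=1}^s g_ih_i$ with $\deg g_i = a_i$, $\deg h_i = d - a_i$, and $0 < a_i < d$. Since polarization is linear, $\tilde f = \sum_{i=1}^s \widetilde{g_ih_i}$, so it suffices to bound $\prk(\widetilde{g_ih_i})$ for each $i$. The key tool is a Leibniz-type expansion: iterating the product rule gives
\[
\nabla_{y_1}\cdots\nabla_{y_d}(g_ih_i) = \sum_{S\subseteq[d]} (\nabla_{y_S}g_i)(\nabla_{y_{[d]\setminus S}}h_i),
\]
and evaluating at $0$ kills every term except those with $|S|=a_i$, since otherwise one of the two factors is a polynomial of positive degree vanishing at the origin. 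This yields
\[
\widetilde{g_ih_i}(y_1,\ldots,y_d) = \sum_{\substack{S\subseteq[d]\\|S|=a_i}} \tilde g_i(y_S)\,\tilde h_i(y_{[d]\setminus S}),
\]
a sum of $\binom{d}{a_i}\le \binom{d}{\lfloor d/2\rfloor}$ partition-rank-one tensors. Summing over $i$ gives $\prk(\tilde f)\le \binom{d}{\lfloor d/2\rfloor}\str(f)$.

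Neither direction poses a serious obstacle; the only mildly delicate point is verifying the Leibniz expansion and the vanishing of the irrelevant terms at the origin. It is worth noting that the upper bound is characteristic-free, while the characteristic hypothesis enters only through the need to invert $d!$ in the lower bound.
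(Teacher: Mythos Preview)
Your argument is correct. The paper does not supply its own proof of this lemma; it merely cites \cite[claim 3.2]{LZ}, so there is no in-paper argument to compare against. Your diagonal-restriction argument for the lower bound and the Leibniz expansion for the upper bound constitute the standard proof, and this is almost certainly what appears in the cited reference.
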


We need the following key lemma.

\begin{lemma}[\cite{KLP}, theorems 1.3 and A.1]\label{lem:rk-sing}\mbox{}
    \begin{enumerate}
        \item Suppose that $K$ is \emph{algebraically closed} and that the characteristic of $K$ does not divide $d.$ Then
        \[
        \frac{\Brk(f)}{2} \le \str(f) \le (d-1)\cdot \Brk(f).
        \]
        \item Suppose that $K$ is infinite and that $F:V^d\to K$ is multi-linear. Then 
        \[
        \grk(F)\le \prk(F) \le (2^{d-1}-1)\cdot \ark(F)
        \]
    \end{enumerate}
\end{lemma}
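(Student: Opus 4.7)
For part (1), the lower bound $\Brk(f)/2 \leq \str(f)$ is the elementary observation recalled in the introduction: writing $f = \sum_{i=1}^s g_i h_i$, the variety $\bigcap_i V(g_i)\cap V(h_i)$ sits inside $V(\nabla f)$, so Krull's height theorem forces $\Brk(f) \leq 2s$. For the lower bound $\grk(F) \leq \prk(F)$ in part (2), the idea is analogous: if $F = \sum_{i=1}^r F_i$ with each $F_i$ of partition rank one, say $F_i = G_i \cdot H_i$ on complementary index sets, then $S_{F_i}$ contains the hypersurface cut out by the factor whose index set does not contain $d$, giving $\codim S_{F_i} \leq 1$, and the containment $S_F \supseteq \bigcap_i S_{F_i}$ then yields $\grk(F) \leq r = \prk(F)$.

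For the harder direction of (1), $\str(f) \leq (d-1)\Brk(f)$, the plan is to exploit Euler's identity $d \cdot f = \sum_i x_i\, \partial_i f$, valid since $\char(K) \nmid d$, which places $f$ inside the Jacobian ideal $J := (\partial_1 f, \ldots, \partial_n f)$. By definition $r := \Brk(f)$ is the height of $J$, so Krull's height theorem furnishes $r$ elements $g_1, \ldots, g_r \in J$ --- each of degree $d-1$, each a linear combination of the partials --- forming a regular sequence. Working over the algebraic closure and using that the polynomial ring is Cohen--Macaulay (so that $(g_1,\ldots,g_r)$ is unmixed of height $r$), the target is to show $f \in (g_1,\ldots,g_r)$ and thereby write $f = \sum_{j=1}^r \alpha_j g_j$ with $\alpha_j$ of degree $1$; each product $\alpha_j g_j$ then unfolds via $g_j \in J$ into roughly $d-1$ contributions of the form $(\text{linear form})\cdot\partial_i f$, and careful bookkeeping yields the $(d-1)r$ bound.

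For the harder direction of (2), $\prk(F) \leq (2^{d-1}-1)\ark(F)$, I would induct on $d$, with the bilinear base case $d=2$ where both ranks coincide with the usual matrix rank. In the inductive step, small analytic rank means $\overline{S_F(K)}$ has small codimension in $V^{d-1}$, and by restricting $F$ to slices where one of the tensor coordinates is fixed one obtains lower-degree multilinear forms whose analytic ranks are controlled by $\ark(F)$; applying the inductive hypothesis to these slices produces partition-rank-one summands that can be peeled off $F$ while paying a controlled cost. The exponential factor $2^{d-1}-1$ arises naturally as the count of nonempty proper subsets $I \subsetneq [d]$ over which the inductive decomposition branches when reassembling partition-rank-one pieces. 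The principal obstacle in both parts is controlling the constants: in (1), the linear dependence on $d$ demands a tight interaction between the Cohen--Macaulay structure of $J$ and the combinatorics of strength decompositions over a chosen regular sequence, while in (2), the delicate point is ensuring that the factor $2^{d-1}-1$ emerging from inductive peeling does not accumulate multiplicatively at each step.
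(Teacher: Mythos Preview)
The paper does not supply a proof of this lemma; it is quoted from \cite{KLP} (Theorems 1.3 and A.1) and used as a black box, so there is no in-paper argument to compare against.

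Your sketch for the upper bound in part (1) has a genuine gap. You choose a regular sequence $g_1,\ldots,g_r$ of degree-$(d-1)$ forms inside the Jacobian ideal $J$, with $r=\Brk(f)=\textnormal{ht}(J)$, and then declare that ``the target is to show $f\in(g_1,\ldots,g_r)$''. Nothing you have written forces this containment: a maximal regular sequence in $J$ generates an ideal of the same \emph{height} as $J$, not the same ideal, and Euler's identity only places $f$ in $J$ itself. Worse, if the containment did hold, degree reasons would immediately give $f=\sum_j\ell_jg_j$ with each $\ell_j$ linear, hence $\str(f)\le r$ outright---a bound stronger than the $(d-1)r$ in the lemma. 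Your subsequent ``unfolding'' step (rewriting each $g_j$ as a combination of the $\partial_i f$) could then only \emph{increase} the count, and in any case produces $n$ terms per $g_j$, not $d{-}1$; the appearance of the factor $d-1$ in your bookkeeping is unexplained. This internal inconsistency is a signal that the regular-sequence route is not the right mechanism. The argument in \cite{KLP} is geometric rather than ideal-theoretic: one locates a product $U_1\times\cdots\times U_{d-1}$ of linear subspaces inside the kernel variety of the polarization $\tilde f$, with total codimension controlled by $\Brk(f)$, and then decomposes $\tilde f$ (and hence $f$) by splitting each tensor factor along $V=U_i\oplus W_i$.

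For the upper bound in part (2) your inductive outline points in a reasonable direction but omits the key structural step. The argument in \cite{KLP} does not ``restrict to slices and peel off summands'' one at a time; rather, one first shows that $\overline{S_F(K)}$ contains a product of $K$-rational linear subspaces $U_1\times\cdots\times U_{d-1}$ of total codimension at most $\ark(F)$---this is where the induction on $d$ actually lives, exploiting that the fibers of each coordinate projection $S_F\to V^{d-2}$ are linear. Once such a product is in hand, a single expansion of $F$ along the splittings $V=U_i\oplus W_i$ produces the bound, with the constant appearing once rather than accumulating multiplicatively. Your description does not make contact with this linear-subspace structure, and without it there is no visible way to prevent the constant from blowing up across the induction.
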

\begin{proof}[Proof of theorem \ref{thm:str-brk} assuming theorem \ref{thm:prk-grk}]
    Let $\overline{\str}(f)$ denote the strength of $f$ in $\overline{K}[x_1,\ldots,x_n],$ where $\overline{K}$ is the algebraic closure of $K.$ Similarly, let $\overline{\prk}(\tilde f)$ denote the partition rank of $\tilde f$ when considered as a multi-linear map over $\overline{K}.$ By lemmas \ref{lem:rk-sing} and \ref{lem:polar} we get 
    \[
    \grk(\tilde f) \le \overline{\prk}(\tilde f) \le \binom{d}{\lfloor d/2 \rfloor} \cdot \overline{\str}(f) \le (d-1) \binom{d}{\lfloor d/2 \rfloor}\cdot \Brk(f).
    \]
    
    By theorem \ref{thm:prk-grk} and lemma \ref{lem:polar} we have 
    \[
     \str(f) \le \prk(\tilde f) \le (2^{d-1}-1)(d-1)^{r+1} \binom{d}{\lfloor d/2 \rfloor}\cdot \Brk(f) 
    \] 
    when $K$ is infinite and 
    \[
    \str(f) \le \prk(\tilde f)  \ll_d \Brk(f)\cdot  \log (\Brk(f)+1) 
    \]
    when $K$ is finite.
\end{proof}
\subsection{Proof of theorem \ref{thm:prk-grk}} Suppose first that $K=\F_q$ is a finite field. For $l\ge 1$ we write $F_l$ for the multi-linear map $F\otimes \F_{q^l}: (V\otimes \F_{q^l})^d \to \F_{q^l}.$ Note the following simple inequality.

\begin{lemma}[\cite{CM}, proof of corollary 1]\label{lem:rk-extensions}
    We have $\prk(F) \le l\cdot \prk(F_l).$
\end{lemma}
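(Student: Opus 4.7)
The plan is a field-descent argument: given a partition-rank decomposition of $F_l$ over $\F_{q^l}$, I would descend each summand to a bounded number of partition-rank-one summands over $\F_q$ by expanding $\F_{q^l}$-valued forms in an $\F_q$-basis of $\F_{q^l}$. The naive expansion gives a quadratic-in-$l$ blow-up; a simple regrouping cuts this down to linear.

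First I would fix an $\F_q$-basis $e_1, \ldots, e_l$ of $\F_{q^l}$ with $e_1 = 1$, together with the $\F_q$-linear projection $\phi: \F_{q^l} \to \F_q$ sending $\sum_j a_j e_j$ to $a_1$, which restricts to the identity on $\F_q$. Let $r = \prk(F_l)$ and fix a minimal decomposition $F_l = \sum_{i=1}^{r} G_i H_i$, where $[d] = I_i \sqcup I_i^c$ and $G_i, H_i$ are $\F_{q^l}$-multi-linear on the appropriate products of $V \otimes \F_{q^l}$. Restricting each $G_i$ (resp.\ $H_i$) to $V^{|I_i|}$ (resp.\ $V^{d-|I_i|}$) gives an $\F_q$-multi-linear form with values in $\F_{q^l}$, which expands in the fixed basis as
\[
G_i|_{V^{|I_i|}} = \sum_{j=1}^{l} e_j G_{i,j}, \qquad H_i|_{V^{d-|I_i|}} = \sum_{k=1}^{l} e_k H_{i,k},
\]
for $\F_q$-valued, $\F_q$-multi-linear forms $G_{i,j}, H_{i,k}$ on the respective products of $V$.

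Next, since $F$ is $\F_q$-valued, applying $\phi$ to the identity $F = F_l|_{V^d}$ leaves the left side unchanged and expands the right side to $F = \sum_i \sum_{j,k} \phi(e_j e_k)\, G_{i,j} H_{i,k}$. The naive bound here is $r l^2$. The regrouping step is to absorb the $k$-sum into the $H$-factor for each fixed $(i,j)$: setting $\tilde H_{i,j} := \sum_{k=1}^{l} \phi(e_j e_k) H_{i,k}$, still $\F_q$-valued and $\F_q$-multi-linear on $V^{d-|I_i|}$, one has
\[
F = \sum_{i=1}^{r} \sum_{j=1}^{l} G_{i,j}\, \tilde H_{i,j},
\]
a sum of $rl$ products of $\F_q$-multi-linear forms in disjoint variable sets $I_i$ and $I_i^c$. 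Each summand has partition rank one over $\F_q$, yielding $\prk(F) \le lr = l \cdot \prk(F_l)$.

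There is no serious obstacle. The only things to verify are that $\phi|_{\F_q} = \mathrm{id}$ (immediate since $\phi(1) = 1$ and $\phi$ is $\F_q$-linear) and that the regrouping preserves the variable-split defining partition rank one, both of which are routine; in particular the proof makes no use of the finiteness of $K$ beyond the existence of the chosen basis and projection.
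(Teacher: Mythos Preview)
Your argument is correct. The paper does not supply its own proof but simply cites \cite{CM}; your field-descent via a basis expansion together with an $\F_q$-linear projection fixing $\F_q$, followed by regrouping the inner sum into the $H$-factor, is precisely the standard argument used there.
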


The heavy lifting will be done by the following result of  Moshkovitz and Zhu. 
\begin{prop}[\cite{MZ}, theorem 1.6 for $\ep =1$]\label{prop:LR-stable}
    There exists a constant $C= C(d)$ such that the following holds: For any $a\ge 1,$ if $\ark(F_l) \le a$ and  $q^l \ge C(a+1)^{d-2}$ then  $\prk (F_l) \le 2^{d-1} a.$
\end{prop}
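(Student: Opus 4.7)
The plan is to reduce the statement to the infinite-field bound of Lemma \ref{lem:rk-sing}(2) by exploiting the large-field hypothesis $q^l \ge C(a+1)^{d-2}$ to bridge between $\F_{q^l}$ and its algebraic closure $\bar K = \bar{\F}_{q^l}$.

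First I would relate the hypothesis $\ark(F_l) \le a$ to the geometric rank $\grk(F)$. Unpacking definitions, $\ark(F_l)\le a$ means $|S_F(\F_{q^l})|\ge q^{l((d-1)\dim V - a)}$, which is the ``many points'' regime. A Lang--Weil-type estimate applied to $S_F$ should then yield $\grk(F) \le a + O_d(1)$ once $q^l$ is sufficiently large. The subtlety here is that $S_F$ is cut out by $\dim V$ multilinear equations of degree $d-1$, so the naive bound on its degree depends on $\dim V$; to obtain a constant depending only on $d$, one must exploit the multilinear structure of $S_F$ directly rather than treating it as a generic variety of bounded degree.

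Next, passing to $\bar K$ and applying Lemma \ref{lem:rk-sing}(2), together with the fact that $\ark(F\otimes\bar K) = \grk(F)$ by the infinite-field definition, we obtain $\prk(F\otimes\bar K) \le (2^{d-1}-1)\grk(F) \le (2^{d-1}-1)(a + O_d(1))$.

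The hard part, and the main obstacle, is descending this decomposition from $\bar K$ back to $\F_{q^l}$: a priori an optimal decomposition over $\bar K$ uses coefficients in some finite extension and need not descend automatically. Here is precisely where the hypothesis $q^l \ge C(a+1)^{d-2}$ should enter a second time: the constructible scheme parameterizing partition-rank-$r$ decompositions of $F$ has degree controlled in terms of $a$ and $d$, so by another Lang--Weil-style point count (or by a Chevalley--Warning style argument) it acquires an $\F_{q^l}$-rational point as soon as $q^l$ exceeds a polynomial function of $a$ and $d$. The slight slack between $(2^{d-1}-1)a$ coming from the infinite-field bound and the target $2^{d-1}a$ is exactly what accommodates both the Lang--Weil error in the first step and any additive cost of descent. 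I expect the technical heart of the argument to lie in controlling both Lang--Weil constants uniformly in $\dim V$, which is what forces one to work intrinsically with the multilinear geometry of $S_F$ rather than with bare degree bounds.
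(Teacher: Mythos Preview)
This proposition is not proved in the paper at all: it is quoted verbatim as Theorem~1.6 of Moshkovitz--Zhu \cite{MZ} (specialized to $\ep=1$) and used as a black box. There is therefore no proof in the paper to compare your proposal against.

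As for the proposal itself, it is a plausible high-level outline, but it is not a proof, and the two obstacles you yourself flag are exactly the nontrivial content. The first step---bounding $\grk(F)$ by roughly $a$ without $\dim V$-dependence---can indeed be handled using the multilinear structure: the inequality $\ark(F)\ge \grk(F)(1-\log_q(d-1))$ from \cite{CM-cubic} (see the proof of Lemma~\ref{lem:grk-linear-bound-in-ark}) already gives $\grk(F)\le 2a$ once $q^l\ge (d-1)^2$, with no Lang--Weil needed. Note, however, that feeding $\grk(F)\le 2a$ into Lemma~\ref{lem:rk-sing}(2) yields only $\prk(F\otimes\bar K)\le (2^d-2)a$, which already overshoots the target $2^{d-1}a$; you would need the sharper estimate $\grk(F)\le a+O_d(1)$, and that requires more than what you have written. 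The descent step is the real difficulty: the parameter scheme of partition-rank-$r$ decompositions has dimension and degree that \emph{do} depend on $\dim V$, so a naive Lang--Weil or Chevalley--Warning argument on it will not give constants independent of $\dim V$. Overcoming this is essentially the main contribution of \cite{MZ}, and your sketch does not indicate how to do it.
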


\begin{proof}[Proof of theorem \ref{thm:prk-grk} for finite fields]
     Let $r = \grk(F).$ Theorem \ref{thm:ark-grk} implies that \linebreak  $\ark(F_l) \le (d-1)r$ for all $l\ge 1.$  Let
\[
l = \lceil \log C +(d-2) \log\left((d-1)r+1 \right) \rceil,
\]
where $C = C(d)$ is the constant of proposition \ref{prop:LR-stable}. Note that $l \ll_d \log (r+1).$  Our choice of $l$ guarantees that $q^l \ge 2^l \ge C[(d-1)r+1]^{d-2}$ so we may apply proposition \ref{prop:LR-stable} with $a = (d-1)r$ to obtain
\[
\prk(F_l) \le 2^{d-1}(d-1)r.
\]
Applying lemma \ref{lem:rk-extensions} gives
\[
\prk(F) \le l\cdot 2^{d-1}(d-1)r \ll_d r\log(r+1).
\]
\end{proof}
The case of an infinite field is comparatively immediate.
\begin{proof}[Proof of theorem \ref{thm:prk-grk} for infinite fields] By theorem \ref{thm:ark-grk} and lemma \ref{lem:rk-sing}, we have 
\[
\prk(F) \le (2^{d-1}-1)\cdot \ark(F) \le (2^{d-1}-1)(d-1)^r\cdot\grk(F).
\]
\end{proof}
\subsection{Proof of theorem \ref{thm:ark-prk}} Again, $K=\F_q$ is finite and $F:V^d\to K$ is a multi-linear form. We need the following lemma.

\begin{lemma}\label{lem:grk-linear-bound-in-ark}
    We have $\grk(F) \ll_d \ark(F).$
\end{lemma}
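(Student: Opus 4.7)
My plan is to combine a Lang--Weil / Serre upper bound on $|S_F(\F_q)|$ with a Bezout-type control of $\deg(S_F)$, and then handle the small-$q$ regime by passing to a bounded field extension.

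First I would bound $\deg(S_F)$. Letting $N = \dim V$, the variety $S_F \subset V^{d-1}$ is cut out by the $N$ multilinear forms $F(\cdot,\ldots,\cdot,e_i) = 0$ (one for each basis vector $e_i$ of $V$), each of degree $d-1$. By Krull's height theorem, $\grk(F)$ of these equations suffice to define a codimension-$\grk(F)$ subvariety containing $S_F$ as a union of top-dimensional irreducible components, so Bezout's theorem gives $\deg(S_F) \le (d-1)^{\grk(F)}$. Combining this with Serre's bound $|X(\F_q)| \le \deg(X)\, q^{\dim X}$ and taking logarithms yields the key inequality
\[
    \ark(F) \ge \grk(F)\bigl(1 - \log_q(d-1)\bigr).
\]

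When $q \ge (d-1)^2$ the factor on the right is at least $1/2$, so $\grk(F) \le 2\ark(F)$ and the lemma follows immediately.

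The main obstacle is the small-$q$ regime, $q < (d-1)^2$, in which the inequality above is vacuous. Here I would extend scalars to $\F_{q^l}$ with $l$ the minimal integer satisfying $q^l \ge (d-1)^2$; since $q \ge 2$, one has $l \le O_d(1)$. Because $\grk$ is a geometric invariant, $\grk(F_l) = \grk(F)$, so applying the above estimate over $\F_{q^l}$ gives $\grk(F) \le 2\ark(F_l)$. It then remains to bound $\ark(F_l)$ linearly by $\ark(F)$ with a constant depending only on $d$. For this step I would invoke the asymptotic characterization of geometric rank established in \S 4, which quantifies the convergence of $\ark(F_l)$ to $\grk(F)$ and allows us to compare $\ark(F_l)$ with $\ark(F)$. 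An equivalent route is through the Weil restriction $G := \Tr_{\F_{q^l}/\F_q} \circ F_l$, a degree-$d$ multilinear form on the $\F_q$-vector space $V \otimes_{\F_q} \F_{q^l}$ satisfying $\ark(G) = l\cdot\ark(F_l)$ and $\grk(G) = l\cdot\grk(F)$; the refined point-counting tools of \S 4 applied to $G$ over the original field $\F_q$ then close the loop and finish the proof.
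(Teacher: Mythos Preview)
Your argument for the regime $q \ge (d-1)^2$ is essentially identical to the paper's: you rederive the inequality $\ark(F) \ge \grk(F)\bigl(1-\log_q(d-1)\bigr)$ (which the paper simply cites from \cite{CM-cubic}) and draw the same conclusion.

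The gap is in the small-$q$ regime. You correctly reduce to the inequality $\ark(F_l) \ll_d \ark(F)$ for some bounded $l$, but neither of the two routes you propose actually yields it. The ``asymptotic characterization of geometric rank'' in \S 4 (Proposition~\ref{prop:liminf-ark}) concerns the limit of $\ark$ of reductions of a tensor over $\F_q(t)$ as the residue field grows; applied to the constant extension of $F$ it says only that $\liminf_l \ark(F_l) = \grk(F)$, which gives no upper bound on $\ark(F_l)$ for a \emph{fixed} small $l$ in terms of $\ark(F)$. Your Weil restriction route is circular: with $G = \Tr_{\F_{q^l}/\F_q}\circ F_l$ you correctly have $\ark(G)=l\cdot\ark(F_l)$ and $\grk(G)=l\cdot\grk(F)$, but the only point-counting inequality available for $G$ over $\F_q$ is again $\ark(G)\ge\grk(G)\bigl(1-\log_q(d-1)\bigr)$, which collapses back to the original (vacuous for small $q$) inequality and never introduces $\ark(F)$.

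What the paper actually uses here is the explicit bound $\ark(F_l)\le l^{d-1}\ark(F)$, taken from \cite[proof of Corollary~1]{CM}. This inequality is not a Lang--Weil statement; it comes from expanding the bias of $F_l$ over an $\F_q$-basis of $\F_{q^l}$ and applying Cauchy--Schwarz / H\"older-type averaging. With that input and $l=\lceil 2\log(d-1)\rceil$, the paper concludes $\grk(F)\le 2\,l^{d-1}\ark(F)\ll_d \ark(F)$. You should either invoke this inequality directly or supply its short proof; the tools of \S 4 do not substitute for it.
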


\begin{proof}
    The inequality $\ark(F) \ge \grk(F)\cdot (1-\log_q(d-1)) $ was established in \cite[Proposition 4.1]{CM-cubic} (they only state the case $d=3,$ but the proof is identical for arbitrary $d$). We deduce that for $q \ge (d-1)^2$ we have $\grk(F) \le 2\cdot \ark(F).$
    
    To handle small fields, we use the inequality $\ark(F_l) \le l^{d-1}\cdot \ark(F),$ proved in \cite[Proof of corollary 1]{CM}. Choosing $l = \lceil 2\log (d-1) \rceil,$ we have $q^l \ge 2^l\ge (d-1)^2$ so we obtain 
\begin{align*}
    \grk(F) &= \grk(F_l) \le 2\cdot \ark (F_l) \\
    &\le 2l^{d-1} \ark(F) \le 2\cdot (2\log (d-1)+1)^{d-1}\cdot \ark(F).
\end{align*}  
\end{proof}

\begin{proof}[Proof of theorem \ref{thm:ark-prk}]
    By lemma \ref{lem:grk-linear-bound-in-ark} and theorem \ref{thm:prk-grk} we have 
    \[
    \prk(F) \ll_d \grk(F) \log(1+\grk(F)) \ll_d \ark(F) \log(1+\ark(F)).
    \]
\end{proof}

\subsection*{Structure of the proof of theorem \ref{thm:ark-grk}} We begin by applying Weil restriction to reduce to the case of purely transcendental extensions of either $\Q$ or $\F_q.$ By choosing a basis for $V$ we may assume without loss of generality that $F$ has coefficients in $A,$ where $A = \Z[t_1,\ldots,t_s]$ or $A = \F_q[t_1,\ldots,t_s],$ respectively. The quotients of $A$ by maximal ideals are finite fields, and we consider the reductions of $F$ over these finite fields. We prove a characterization of geometric rank via the asymptotic count of singular points for these reductions. 

The chief novel technique in our proof lies in the next step. We employ a scaling inequality for zeroes of systems of multi-linear equations to deduce that many of these singular points have small height and therefore \emph{lift to genuine singular points of} $F$. For finite fields, there is one final step consisting of estimating the size of the fibers of the ``evaluation at zero'' map, which sends singular points with coefficients in $\F_q[t]$ to singular points with coefficients in $\F_q.$

\section{Weil restriction, dimension and Point Counting}
\begin{definition}\label{defn:geom-rk-bound}
    Let $K$ be a field. Let $D_{K,d}$ denote the least positive integer such that for any finite dimensional vector space $V$ over $K$ and any multi-linear map $G:V^{d-1}\to V$, we have 
    \begin{equation}\label{eqn:geom-rk-bound}
    \codim_{V^{d-1}} \overline{Z_G(K)} \le D_{K,d} \cdot \codim_{V^{d-1}} Z_G,
    \end{equation}
    where $Z_G = (G(x) = 0).$
\end{definition}
The point of this section is to show how we can relate the constants $D_{K,d}, D_{L,d}$ when $L$ is an extension field or a residue field of $K$. 
\subsection{Weil Restriction}
Our first result deals with finite separable extensions.

\begin{lemma} [Weil restriction]\label{lemma:separable}
    Let $L/K$ be a finite separable extension of infinite fields. Then $D_{L,d} \le D_{K,d}.$ 
\end{lemma}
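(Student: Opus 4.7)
The plan is to use Weil restriction of scalars from $L$ down to $K$ in order to turn an $L$-multilinear problem into a $K$-multilinear one, and then simply apply the constant $D_{K,d}$. Let $n = [L:K]$, let $W$ be a finite-dimensional $L$-vector space, and let $G \colon W^{d-1} \to W$ be $L$-multilinear. Write $V$ for $W$ viewed as a $K$-vector space (so $\dim_K V = n \dim_L W$), and let $\tilde G \colon V^{d-1} \to V$ denote $G$ regarded as a $K$-multilinear map. The goal is to apply the defining inequality of $D_{K,d}$ to $\tilde G$ and translate both sides back to statements about $G$.

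The key observation is that the zero scheme $Z_{\tilde G}$ over $K$ is naturally isomorphic to the Weil restriction $R_{L/K}(Z_G)$, as a closed $K$-subscheme of $V^{d-1} \cong R_{L/K}(W^{d-1})$. This is a one-line check on functors of points: for any $K$-algebra $A$ one has the canonical identification $W \otimes_L (A \otimes_K L) \cong V \otimes_K A$, and under it ``$G = 0$'' and ``$\tilde G = 0$'' are literally the same equation. Since $L/K$ is finite separable, Weil restriction multiplies dimensions by $n$, which gives
\[
\codim_{V^{d-1}} Z_{\tilde G} \;=\; n \cdot \codim_{W^{d-1}} Z_G.
\]
For the rational-point closure, let $Y = \overline{Z_G(L)}$ inside $W^{d-1}$. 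Then $R_{L/K}(Y)$ is a closed $K$-subscheme of $V^{d-1}$ whose $K$-points are $Y(L) \supseteq Z_G(L)$, which under the identification $W^{d-1}(L) = V^{d-1}(K)$ is exactly $Z_{\tilde G}(K)$. Hence $R_{L/K}(Y) \supseteq \overline{Z_{\tilde G}(K)}$, and the dimension formula yields
\[
\codim_{V^{d-1}} \overline{Z_{\tilde G}(K)} \;\ge\; \codim_{V^{d-1}} R_{L/K}(Y) \;=\; n \cdot \codim_{W^{d-1}} \overline{Z_G(L)}.
\]

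Combining these two displays with the inequality $\codim \overline{Z_{\tilde G}(K)} \le D_{K,d} \cdot \codim Z_{\tilde G}$ coming from Definition \ref{defn:geom-rk-bound} applied to $\tilde G$ gives $n \cdot \codim_{W^{d-1}} \overline{Z_G(L)} \le D_{K,d} \cdot n \cdot \codim_{W^{d-1}} Z_G$, and dividing by $n$ yields exactly $D_{L,d} \le D_{K,d}$. The main technical point that needs care is checking that Weil restriction interacts correctly with both the zero locus $Z_G$ (giving $Z_{\tilde G}$) and the Zariski closure of $L$-rational points (giving the containment, not equality, that is used above); separability of $L/K$ is what makes the dimension formula hold on the nose, while everything else is a formal consequence of the definition of Weil restriction.
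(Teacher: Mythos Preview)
Your proof is correct and follows essentially the same approach as the paper: restrict scalars to turn the $L$-multilinear map $G$ into a $K$-multilinear map, identify $Z_{\tilde G}$ with the Weil restriction $R_{L/K}(Z_G)$, use the dimension formula for Weil restriction (which requires separability) on both $Z_G$ and $\overline{Z_G(L)}$, and combine with the defining inequality for $D_{K,d}$. The paper cites \cite{BLR90} for representability and closedness of Weil restriction and \cite{JLMM22} for the codimension formula, but otherwise the argument is identical to yours.
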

\begin{proof}
Let $\ell = [L:K].$ Let $V$ be a finite-dimensional vector space over $L$ and \linebreak $G: V^{d-1}\to V$ a multi-linear form. Denote by $V_K$ the $K$-vector space (of dimension $\ell\cdot \dim V$)  and by $G_K:V_K^{d-1}\to V_K$ the multi-linear form (over $K$) obtained by restricting scalars. By \cite{BLR90}, \S 7.6, Theorem 4, we have:
\begin{equation}\label{eqn:weil}
R_{L/K}(Z_G)\cong Z_{G_K}.
\end{equation}
By \cite{JLMM22}, Corollary 3.8 we have $\ell\cdot \codim_{V^{d-1}}(Z_G) = \codim_{V_K^{d-1}}(Z_{G_K})$. By the definitions of Zariski closure and Weil restriction, we have $Z_{G_K}(K) \subset R_{L/K}(\overline{Z_G(L)}).$
Moreover, $R_{L/K}(\overline{Z_G(L)})$ is closed in $V_K^{d-1}$ (\cite{BLR90}, \S 7.6, Proposition 2), so 
\[
\overline{Z_{G_K}(K)} \subset 
R_{L/K}(\overline{Z_G(L)}).
\]
Taking codimensions, we obtain the inequality
\begin{equation}\label{eqn:codim}
\codim_{V_K^{d-1}} \overline{Z_{G_K}(K)} \geq \ell \cdot \codim_{V^{d-1}} \overline{Z_G(L)}.
\end{equation}
We may now apply \cref{eqn:geom-rk-bound,eqn:weil,eqn:codim} to obtain:
\[
\codim \overline{Z_G(L)} \leq \frac{\codim \overline{Z_{G_K}(K)}}{\ell} \leq \frac{D_{K,d}\codim Z_{G_K}}{\ell} = D_{K,d}\codim Z_G.
\]
\end{proof}
Next, we may reduce the proof of \cref{thm:ark-grk} (1) to the case of purely transcendental extensions of $\Q$ or $\F_q$.
\begin{prop}[Reduction to Pure Transcendental Case]\label{cor:pure-transc}
    Suppose \cref{thm:ark-grk} (1) holds for the fields $K = \Q,\ \F_q(t)$ and their purely transcendental extensions, and suppose \cref{thm:ark-grk} (2) holds. Then \cref{thm:ark-grk} (1) holds.
\end{prop}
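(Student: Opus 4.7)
The plan is to perform a two-step descent: first reduce from the general infinite field $K$ to a finitely generated subfield $K_0 \subseteq K$ via Noetherianity, and then apply the Weil restriction of \cref{lemma:separable} to descend from $K_0$ to a purely transcendental extension of $\Q$ or of $\F_p$, to which the hypothesis of the proposition applies.

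For the first reduction, I would take any multi-linear $G: V^{d-1} \to V$ over $K$ and set $W := \overline{Z_G(K)}$. Since the polynomial ring $K[V^{d-1}]$ is Noetherian, the ideal $I(W)$ is generated by finitely many polynomials $g_1, \ldots, g_s$. Pick a finitely generated subfield $K_0 \subseteq K$ containing all coefficients of $G$ together with those of the $g_i$. Then $G$ and $W$ admit $K_0$-models $G_0$ and $W_0 := V(g_1, \ldots, g_s) \subseteq V^{d-1}_{K_0}$, with $G = G_0 \otimes_{K_0} K$, $Z_G = Z_{G_0} \times_{K_0} K$, and $W = W_0 \times_{K_0} K$; faithful flatness of $K/K_0$ then preserves codimensions. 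Moreover $Z_{G_0}(K_0) \subseteq Z_G(K) \subseteq W(K) = W_0(K)$, and since these points are $K_0$-rational, $Z_{G_0}(K_0) \subseteq W_0(K_0)$, so $W_0 \supseteq \overline{Z_{G_0}(K_0)}_{K_0}$. Chaining the inequalities, one obtains
\[
\codim_K W = \codim_{K_0} W_0 \le \codim_{K_0} \overline{Z_{G_0}(K_0)}_{K_0} \le D_{K_0, d} \cdot \codim_K Z_G,
\]
so it remains to bound $D_{K_0, d} \le (d-1)^{r(K)}$.

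For the second step I would proceed by cases. If $K_0$ is finite (which occurs only when $K$ is an infinite algebraic extension of $\F_p$), the $D_{K_0,d}$ bookkeeping is vacuous, and one must appeal to \cref{thm:ark-grk}(2) through a density argument across increasing finite subfields $\F_{p^n} \subseteq K$ to conclude $D_{K_0, d} \le d-1 \le (d-1)^{r(K)}$. Otherwise $K_0$ is infinite and finitely generated over its perfect prime field $k_0 \in \{\Q, \F_p\}$, so admits a separating transcendence basis $t_1, \ldots, t_m$, making $K_0 / k_0(t_1, \ldots, t_m)$ finite separable. In positive characteristic, infinitude of $K_0$ forces $m \ge 1$, so $k_0(t_1,\ldots,t_m)$ is itself infinite, and \cref{lemma:separable} applies to give $D_{K_0, d} \le D_{k_0(t_1, \ldots, t_m), d}$. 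The field $k_0(t_1, \ldots, t_m)$ is a purely transcendental extension of $\Q$ or of $\F_p(t)$, so the hypothesis yields $D_{k_0(t_1,\ldots,t_m), d} \le (d-1)^{r(k_0(t_1, \ldots, t_m))}$, and a direct calculation gives $r(k_0(t_1, \ldots, t_m)) = r(K_0) \le r(K)$, closing the chain.

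The main obstacle is ensuring the first reduction is lossless: one must pick $K_0$ large enough to contain not merely the coefficients of $G$ but also those of defining equations for $\overline{Z_G(K)}$, which is made possible only by the Noetherianity of $K[V^{d-1}]$. A secondary subtlety is the case when $K_0$ ends up finite, where the $D_{K_0,d}$ machinery breaks down because the Zariski closure of a finite set of points has maximal codimension; that case has to be bypassed by invoking \cref{thm:ark-grk}(2) and point-counting directly, rather than by Weil restriction.
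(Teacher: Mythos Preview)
Your approach is essentially the paper's: descend to a finitely generated subfield, then invoke a separating transcendence basis together with \cref{lemma:separable}, handling the case $K\subseteq\overline{\F_p}$ separately via \cref{thm:ark-grk}(2). Two remarks are in order.

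First, your first reduction is over-engineered. You do not need to absorb generators of $I(W)$ into $K_0$: it suffices to take $K_0$ generated by the coefficients of $G$ alone, since $Z_{G_0}(K_0)\subseteq Z_G(K)$ already gives $\overline{Z_{G_0}(K_0)}\times_{K_0}K\subseteq \overline{Z_G(K)}$, hence $\ark(F)\le\ark(F')$ directly. This is what the paper does, and it spares you the Noetherianity detour.

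Second, your treatment of the case $K\subseteq\overline{\F_p}$ contains a slip and a genuine gap. The slip: you cannot conclude $D_{K_0,d}\le d-1$ when $K_0$ is finite --- that inequality is simply false, since $\overline{Z_{G_0}(K_0)}$ is a finite set of points. What you actually want (and what you seem to intend) is $\ark_K(F)\le (d-1)\grk(F)$ directly, bypassing $D_{K_0,d}$ altogether. The gap: your ``density argument across increasing finite subfields'' needs a quantitative input to convert the point counts $|S_F(\F_{q^{e_i}})|$ into a lower bound on $\dim \overline{S_F(K)}$. The paper supplies this via the Schwarz--Zippel/Lang--Weil bound (\cref{lemma:schwarz-zippel}): $|Z(\F_{q^{e_i}})|\ll_M q^{e_i\dim Z}$ gives $\dim\overline{S_F(K)}\ge\limsup \log_{q^{e_i}}|S_F(\F_{q^{e_i}})|$, whence $\ark(F)\le\liminf\ark(F'_{e_i})\le (d-1)\grk(F)$. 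Without such a bound, mere density of $\bigcup_i S_F(\F_{q^{e_i}})$ in $\overline{S_F(K)}$ tells you nothing about dimension.
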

\begin{proof}
    We consider three types of infinite field $K$: 
    \begin{itemize}
        \item[(i)] A field of infinite transcendence degree over $\Q$ or $\F_q$
        \item[(ii)] A field of finite transcendence degree over $\Q$ or $\F_q(t)$
        \item[(iii)] An infinite field contained in $\overline{\F_p}$ for some $p$.
    \end{itemize}
    
    In case (i), the result is trivial (and also meaningless). 
    
    For case (ii), let $F$ be a $d$-tensor with coefficients in $K$. Let $K'\subseteq K$ denote the subfield generated by the coefficients of $F$ over the prime field of $K$. By construction, the parameter $r$ satisfies $r(K')\leq r(K).$ Let $V'$ be a $K'$-vector space and let \linebreak $F': (V')^d\to K'$ be such that $F'_K\cong F$.
    
    Every finitely-generated extension of a perfect field admits a separating transcendence basis (see \cite{AAA}, Theorem 3 or \cite{vdW}, Lemma 1), so in particular, $K'$ is isomorphic to a finite separable extension of the rational function field $\Q(t_1,\dots, t_{r(K') - 1})$ or $\F_q(t_1,\dots, t_{r(K')})$. By assumption, \cref{thm:ark-grk} holds for $\Q(t_1,\dots, t_{r(K') - 1})$ and $\F_q(t_1,\dots, t_{r(K')})$, so \cref{lemma:separable} implies that \cref{thm:ark-grk} holds for $K'$. Consequently, we have
    \[
    \ark(F)\leq \ark(F') \leq (d-1)^{r(K')}\grk(F') \leq (d-1)^{r(K)}\grk(F') = (d-1)^{r(K)}\grk(F).
    \]
    We will return to the proof of case (iii) after a brief interlude with necessary results.
\end{proof} 
\begin{definition}[Complexity]\label{defn:complexity}
    Suppose an affine variety $Z\subseteq \A^N$ is cut out (set theoretically) by polynomials $P_1,\dots, P_m$. Then the \textit{complexity} of $Z$ is equal to 
    \[\max(N,m, \deg(P_1), \dots, \deg(P_m)).\]
    \end{definition}
    \begin{lemma}[\cite{T12}, Lemma 1 or \cite{LW}, Lemma 1]\label{lemma:schwarz-zippel}
    Let $Z$ be an affine variety of complexity at most $M$ defined over $\F_q$. Then
    \[
    |Z(\F_q)| \ll_M q^{\dim Z}.
    \]
    \end{lemma}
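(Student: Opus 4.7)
The plan is to proceed by reducing to the case where $Z$ is irreducible and then using a projection-fiber counting argument. First I would decompose $Z$ into its irreducible components $Z_1,\ldots,Z_k$. A standard effective Bezout-type bound ensures that the number $k$ and the complexity of each $Z_i$ are bounded in terms of $M$ alone. Since $|Z(\F_q)| \le \sum_i |Z_i(\F_q)|$ and $\dim Z = \max_i \dim Z_i$, it suffices to prove the bound when $Z$ is irreducible of some dimension $r$.

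Next I would split on the size of $q$ relative to $M$. If $q \le c(M)$ for some constant depending only on $M$, then the crude bound $|Z(\F_q)| \le q^N \le c(M)^M$ already yields $|Z(\F_q)| \ll_M 1 \ll_M q^r$. For the interesting case $q > c(M)$, I would invoke Noether normalization: after a linear change of coordinates in $\A^N$, the projection $\pi \colon Z \to \A^r$ onto a well-chosen $r$-dimensional coordinate subspace becomes finite and dominant, of degree bounded in terms of $M$. The required linear change of coordinates exists over $\F_q$ by a genericity argument: the ``bad'' locus in the parameter space of such linear changes is cut out by polynomial conditions of degree bounded by $M$, and a Schwartz--Zippel-type estimate guarantees a good choice exists as soon as $q > c(M)$. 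Once we have such a $\pi$, the fiber $\pi^{-1}(y)$ over any $y \in \A^r(\F_q)$ is zero-dimensional of size bounded by a function of $M$, so $|Z(\F_q)| \le \sup_{y \in \A^r(\F_q)} |\pi^{-1}(y)(\F_q)| \cdot q^r \ll_M q^r$.

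The main obstacles are (i) the effective bound on the number and individual complexities of the irreducible components of $Z$, and (ii) the existence of a Noether normalization defined over $\F_q$ when $q$ is large enough relative to $M$. Both are classical in effective algebraic geometry and precisely the content of the cited references \cite{T12,LW}; I do not expect new ideas to be needed, only a careful accounting of how the implicit constants depend on $M$ alone (and not on $q$ or $N$, beyond $N \le M$).
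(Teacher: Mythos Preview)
The paper does not supply its own proof of this lemma; it is simply quoted from \cite{T12} and \cite{LW} and then used as a black box. Your sketch is correct and is essentially the argument in those references: bound the number (and degree) of irreducible components by effective B\'ezout, then for each irreducible piece use a finite linear projection to $\A^r$ (Noether normalization) so that $|Z_i(\F_q)| \le \deg(\pi)\cdot q^r$, with the small-$q$ case handled trivially. One small remark: you do not actually need to control the full \emph{complexity} (in the sense of Definition~\ref{defn:complexity}) of each component $Z_i$, only its degree, which is what B\'ezout gives directly and what governs both the existence of the $\F_q$-rational Noether projection for $q$ large and the bound on its fiber size.
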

    \begin{remark*}
        Lang and Weil (\cite{LW}) gave implied constants for the bounds in \cref{lemma:schwarz-zippel} and \cref{prop:lang-weil}) in terms of three parameters: $\dim Z, \deg Z, N$. As in \cite{T12}, we combine these three parameters into one parameter: complexity. While this new parameter is apparently less robust, since we do not care about the implied constants from these two results, \cref{defn:complexity} allows us to simplify the rest of our arguments in this section.
    \end{remark*}   
    As a consequence of \cref{lemma:schwarz-zippel}, we are able to prove case (iii) of \cref{cor:pure-transc}.
    \begin{proof}[End of proof of \cref{cor:pure-transc}]
        Fix a multi-linear form $F$ over $K \subseteq \overline{\F_p}$. The coefficients of $F$ lie in some finite subfield $\F_q$ for some $q = p^e$. Let $V'$ be an $\F_q$-vector space and let $F': (V')^d\to \F_q$ be a multi-linear form such that $F'_K\cong F$.  Write $K = \bigcup_{i>0} \F_{q^{e_i}}$, where $1 = e_1 < e_2 < \dots$ is an increasing sequence of positive integers. Applying \cref{lemma:schwarz-zippel} to $Z = \overline{S_F(K)}$, we obtain 
\[
        \dim Z \geq \lim\sup_{i\to\infty} \log_{q^{e_i}} |Z(\F_{q^{e_i}})|,
\]
    or equivalently
    \begin{equation}\label{eqn:lim-inf-ark}
        \ark(F) \leq \lim\inf_{i\to\infty} \ark (F'_{e_i}).
    \end{equation}
    By \cref{thm:ark-grk} (2), the right-hand side of \cref{eqn:lim-inf-ark} is bounded above by $(d-1)\grk(F)$, proving the claim.
    \end{proof}
    
\subsection{Reduction to Finite Fields and the Lang-Weil Bound}
Kopparty, Moshkovitz, and Zuiddam proved the following result which connects the geometric rank of a $d$-tensor over $\Q$ to the analytic ranks of a family of prime characteristic models.
\begin{prop}[\cite{KMZ20}, Theorem 8.1]
    Let $F: (\Z^n)^d\to \Z$ be a $d$-tensor. Then we have
    \[
\liminf_{p\to\infty} \ark(F\otimes_\Z \F_p) = \grk(F\otimes_\Z\Q).
    \]
\end{prop}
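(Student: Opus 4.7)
The plan is to reduce the statement to the classical Lang-Weil estimate combined with an arithmetic descent argument. Write $\mathcal{S} := S_F \subset \A^{(d-1)n}_{\Z}$ for the scheme cut out by the $n$ multilinear polynomials $F(x_1,\ldots,x_{d-1},e_j)$, where $e_1,\ldots,e_n$ is the standard basis of $\Z^n$. Unwinding definitions, the claim becomes
\[
\limsup_{p\to\infty} \log_p |\mathcal{S}(\F_p)| = \dim \mathcal{S}_{\Q}.
\]
The first step is to invoke upper semicontinuity of fiber dimension for the family $\mathcal{S}\to \Spec \Z$, i.e.\ the fact that a finitely presented $\Z$-scheme has the same dimension on the generic fiber and on almost all special fibers, to conclude that $\dim \mathcal{S}_{\F_p} = \dim \mathcal{S}_{\Q}$ for all but finitely many primes $p$.

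For the easy direction, apply Lemma \ref{lemma:schwarz-zippel} to each $\mathcal{S}_{\F_p}$: the complexity of these varieties is bounded uniformly in $p$ by a constant depending only on $F$, yielding $|\mathcal{S}(\F_p)| \leq C\cdot p^{\dim \mathcal{S}_{\Q}}$ with $C$ independent of $p$. Hence $\log_p|\mathcal{S}(\F_p)| \leq \dim \mathcal{S}_{\Q} + o(1)$, which rearranges to $\liminf_p \ark(F\otimes \F_p) \geq \grk(F\otimes \Q)$.

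For the reverse inequality, I would select a top-dimensional absolutely irreducible component $W$ of $\mathcal{S}_{\overline{\Q}}$ and let $L/\Q$ denote its field of definition; taking scheme-theoretic closure of $W$ inside $\mathcal{S}_{\Oc_L}$ yields an integral model $\mathcal{W}$. By Chebotarev density applied to the Galois closure of $L$, a positive-density set of rational primes $p$ admit a prime $\mathfrak{p}\subset \Oc_L$ of residue degree one; for all but finitely many such $\mathfrak{p}$, generic geometric irreducibility guarantees that the fiber $\mathcal{W}_{\mathfrak{p}}$ remains absolutely irreducible over $\F_p$ of dimension $\dim W = \dim \mathcal{S}_{\Q}$. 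The classical Lang-Weil estimate then gives $|\mathcal{W}_{\mathfrak{p}}(\F_p)| = p^{\dim W}(1 + O(p^{-1/2}))$, and since $\mathcal{W}_{\mathfrak{p}}\subset \mathcal{S}_{\F_p}$ this forces $\limsup_p \log_p|\mathcal{S}(\F_p)| \geq \dim \mathcal{S}_\Q$, completing the proof.

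The main obstacle lies in this descent step. A priori the top-dimensional components of $\mathcal{S}_{\overline{\Q}}$ need not be defined over $\Q$, and the Lang-Weil lower bound is only effective when the variety is absolutely irreducible over $\F_p$: a variety irreducible over $\F_p$ but reducible over $\overline{\F_p}$ can have a negligible $\F_p$-point set, so one genuinely must descend via primes of residue degree one in $L$ rather than reducing mod $p$ directly. Chebotarev density (or, essentially equivalently, Hilbert irreducibility applied to the Galois action on the set of top-dimensional components of $\mathcal{S}_{\overline{\Q}}$) supplies this input and is the only genuinely arithmetic step in an otherwise algebro-geometric proof.
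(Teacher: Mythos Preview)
The paper does not supply its own proof of this proposition; it is quoted from \cite{KMZ20}. Your argument is correct and mirrors exactly the strategy the paper develops for the analogous statement over $\F_q[t]$ (\cref{prop:liminf-ark}, proved via \cref{lemma:dense-geom-irred-fibers} and \cref{lemma:reduction}): bound $|\mathcal{S}(\F_p)|$ above by the Schwarz--Zippel/Lang--Weil upper bound, and bound it below by passing to a finite extension $L$ over which a top-dimensional component becomes geometrically irreducible, locating infinitely many primes of residue degree one in $L$, and applying the Lang--Weil lower bound to the fiber there. The only difference is the source of the degree-one primes: you invoke Chebotarev density, whereas the paper (in the $\F_q(t)$ setting) uses an elementary Cassels-type argument via Hensel's lemma (\cref{lemma:lying-over} and \cref{lemma:field-emb}); over $\Q$ Chebotarev is the natural input and achieves the same end.
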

For our purposes, we require an analogous result for a $d$-tensor $F: (\F_q[t]^n)^d\to \F_q[t]$. The remainder of this section will be devoted to the proof of the following proposition.
\begin{prop}\label{prop:liminf-ark}

    Let $A = \F_q[t], K = \F_q(t)$ and let $F:(A^n)^d\to A$ be a $d$-tensor. We have
    \begin{equation}
    \liminf_{\pf\in \max\Spec A, |A/\pf|\to \infty} \ark(F\otimes_A A/\pf) = \grk(F\otimes_A K).
\end{equation}
\end{prop}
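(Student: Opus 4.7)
The plan is to mirror the strategy that Kopparty--Moshkovitz--Zuiddam use to prove the analogous $(\Z,\Q)$-case cited just before the proposition, with rational primes of $\Z$ replaced throughout by maximal ideals of $A=\F_q[t]$. Set $n:=\dim_A V$ and $\delta:=\dim_K(S_F\otimes_A K)$, so that $\grk(F\otimes_A K)=(d-1)n-\delta$. The target identity is then equivalent to
\[
\limsup_{|A/\pf|\to\infty}\log_{|A/\pf|}|S_F(A/\pf)|\;=\;\delta,
\]
which I prove by bounding $|S_F(A/\pf)|$ above for all sufficiently large $|A/\pf|$ and below for a suitable infinite sequence of $\pf$.

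For the upper bound, $S_F\subset\A^{(d-1)n}_A$ is cut out by multi-linear equations whose complexity depends only on $n$ and $d$. Upper semicontinuity of fiber dimension for $S_F\to\Spec A$ gives $\dim_{A/\pf}(S_F\otimes_A A/\pf)\leq\delta$ outside a finite set of $\pf$, and \cref{lemma:schwarz-zippel} then yields $|S_F(A/\pf)|\ll_{n,d}|A/\pf|^\delta$. This delivers $\limsup_{|A/\pf|\to\infty}\log_{|A/\pf|}|S_F(A/\pf)|\leq\delta$.

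For the lower bound, fix a top-dimensional irreducible component $Y$ of $S_F\otimes_A K$ and a finite Galois extension $L/K$ over which $Y_L$ contains a geometrically irreducible $\delta$-dimensional component $Y'$. Standard spreading-out promotes $Y'$ to a closed subscheme of $\A^{(d-1)n}_B$, where $B$ is a suitable localisation of the integral closure of $A$ in $L$, so that for all but finitely many primes $\pf'\subset B$ the reduction $Y'\otimes_B B/\pf'$ remains geometrically irreducible of dimension $\delta$. By the Chebotarev density theorem applied to the global function-field Galois extension $L/K$, infinitely many $\pf\subset A$ split completely in $B$; for any such $\pf$ outside the finite bad set and any prime $\pf'\subset B$ above $\pf$, the natural map $A/\pf\to B/\pf'$ is an isomorphism, so $Y'\otimes_B B/\pf'$ descends to a geometrically irreducible $\delta$-dimensional closed subvariety of $S_F\otimes_A A/\pf$ defined over $A/\pf$. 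The Lang--Weil theorem then forces
\[
|S_F(A/\pf)|\;\geq\;|Y'(A/\pf)|\;\geq\;\tfrac12|A/\pf|^\delta
\]
once $|A/\pf|$ is large, and since completely split primes of $A$ in $B$ occur with arbitrarily large degree, they provide a sequence with $|A/\pf|\to\infty$.

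The main technical obstacle lies in the spread-out descent step: exhibiting $Y'$ as a closed subscheme of $\A^{(d-1)n}_B$ whose fibers over $\Spec B$ remain geometrically irreducible of the correct dimension outside a finite bad locus, and then tracking it down to an $A/\pf$-subvariety of $S_F\otimes_A A/\pf$ through completely split $\pf$. Noetherian spreading-out controls the fiberwise behaviour, and function-field Chebotarev combined with Lang--Weil supplies the count, but composing these ingredients cleanly is where the bookkeeping is concentrated. Modulo those details, the argument is essentially a transcription of the $(\Z,\Q)$-case.
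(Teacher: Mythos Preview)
Your approach is essentially the same as the paper's: both reduce to $\limsup \log_{|A/\pf|}|S_F(A/\pf)| = \delta$, handle the upper bound via constancy of fiber dimension on a dense open plus the Schwarz--Zippel bound (\cref{lemma:schwarz-zippel}), and obtain the lower bound by passing to a finite extension $L/K$ over which a top component becomes geometrically irreducible, producing infinitely many primes $\pf$ with $A/\pf \cong B/\qf$ for some $\qf \mid \pf$, and applying Lang--Weil to the reduced component. The one substantive difference is in how those primes are produced: you invoke Chebotarev density for the Galois extension $L/K$ to find completely split primes, whereas the paper uses a more elementary Cassels-type argument (\cref{lemma:lying-over} and \cref{lemma:field-emb}), embedding $L$ into the completion $K_\pi$ for infinitely many irreducible $\pi$ via Hensel's lemma and reading off $\qf$ as the kernel of $B \to A/\pi$. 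Your route is shorter to state and avoids the appendix material; the paper's route keeps the input self-contained and sidesteps the bookkeeping Chebotarev requires when $L/K$ has a nontrivial constant-field extension. One minor wording issue: ``upper semicontinuity of fiber dimension'' is not quite the right invocation for $\dim(S_F)_\pf \leq \delta$ on a cofinite set---the clean statement is constructibility of the fiber-dimension function, as in \cref{lemma:dim-fiber}.
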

        \begin{lemma}[\cite{T12}, Theorem 2 or \cite{LW}, Theorem 1]\label{prop:lang-weil}
            Suppose $Z$ is an affine variety of complexity at most $M$ defined over $\F_q.$ Let $c = c(Z)$ be the number of top-dimensional geometrically irreducible components of $Z.$ Then
            \[
            |Z(\F_q)| = (c + O_M(q^{-1/2}))q^{\dim Z}.
            \]
        \end{lemma}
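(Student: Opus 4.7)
The plan is to carry out the classical Lang--Weil argument within the complexity framework of \cref{defn:complexity}. I would first reduce to the case of a geometrically irreducible variety. Decompose $Z = W_1\cup \cdots \cup W_k \cup Z'$, where $W_1,\ldots,W_k$ are the top-dimensional $\F_q$-irreducible components and $\dim Z' < \dim Z$. Classical bounds on primary decomposition give $k = O_M(1)$ and bound the complexity of each $W_i$ and of $Z'$ in terms of $M$, so \cref{lemma:schwarz-zippel} controls the contribution of $Z'$ and of each pairwise intersection $W_i\cap W_j$ by $O_M(q^{\dim Z -1})$. A component $W_i$ which is $\F_q$-irreducible but not geometrically irreducible splits over $\overline{\F_q}$ into at least two Galois-conjugate pieces, so its $\F_q$-points are confined to the intersection of two such conjugates---a subvariety of strictly smaller dimension---and contribute $O_M(q^{\dim Z - 1})$ points. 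It thus suffices to show that each of the $c$ remaining geometrically irreducible top components $W$ of dimension $d = \dim Z$ satisfies $|W(\F_q)| = q^d + O_M(q^{d - 1/2})$; summing over the $c$ components then yields the stated estimate.

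I would prove this core bound by induction on $d$. The base case $d = 1$ is the Hasse--Weil bound for a curve: passing to a smooth projective model bounds the genus by $O_M(1)$ via Castelnuovo-type estimates in terms of the degree of $W$, and the $O_M(1)$ points lying at infinity or above the singular locus contribute negligibly, yielding $|W(\F_q)| = q + O_M(q^{1/2})$. For the inductive step, embed $W \subset \A^N$ and take a pencil $\{H_t\}_{t\in \A^1}$ of affine hyperplanes. A Bertini-type argument shows that for all but $O_M(1)$ values of $t \in \overline{\F_q}$, the slice $W \cap H_t$ is geometrically irreducible of dimension $d - 1$ and complexity $O_M(1)$. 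Applying the inductive hypothesis to each good rational slice and summing over $t \in \F_q$, with the bad slices handled via \cref{lemma:schwarz-zippel}, gives
\[
|W(\F_q)| = q \cdot \bigl(q^{d-1} + O_M(q^{d - 3/2})\bigr) + O_M(q^{d-1}) = q^d + O_M(q^{d-1/2}).
\]

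The main obstacle is the arithmetic Bertini step: one needs enough rational parameters $t \in \F_q$ for which $W \cap H_t$ remains geometrically irreducible. Over infinite fields this is classical, but the ``bad'' locus $B \subset \A^1$ could a priori consume all of $\A^1(\F_q)$ when $q$ is very small relative to $M$. The standard workaround, which I would adopt, is to observe that $B$ is cut out by equations of complexity $O_M(1)$ and so has degree $O_M(1)$; hence once $q \ge q_0(M)$ for a threshold depending only on $M$, a positive proportion of $t \in \F_q$ yields good slices. For the finitely many $q < q_0(M)$ the claim is vacuous, since both sides are $O_M(1)$ and the bound is absorbed in the $O_M$-constant.
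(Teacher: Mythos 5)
The paper does not prove this lemma at all; it is stated as a black-box citation to \cite{T12} (Tao's exposition) and \cite{LW}. So there is no internal proof to compare against, and what you have written is a reconstruction of the argument from the cited sources. Your outline is essentially Tao's: reduce to a single geometrically irreducible component, discard $\F_q$-irreducible but geometrically reducible components (whose $\F_q$-points lie on pairwise intersections of Galois conjugates, hence in smaller dimension), and then run an induction on dimension with Hasse--Weil as the base case and an arithmetic Bertini slicing argument for the inductive step. That decomposition and the way you absorb small $q$ into the implied constant are both correct.

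The one place that deserves a flag is the inductive step, which you compress into ``a Bertini-type argument shows.'' For a pencil of \emph{parallel} affine hyperplanes $\{x_1 = t\}$ (which is what you need so that summing over $t\in\F_q$ counts every affine point exactly once), geometric irreducibility of the slices is not automatic from $W$ being geometrically irreducible of dimension $\ge 2$: one must first perform a generic linear change of coordinates, and one must know that (i) the resulting ``bad slice'' locus has degree $O_M(1)$ in the parameter $t$, and (ii) a suitable coordinate change exists \emph{over $\F_q$}. Both are true and both are established in the cited references, but they are the actual technical content of the lemma and should not be left at the level of a gesture. Your final paragraph shows awareness of the issue (the bad locus has bounded complexity, so once $q\ge q_0(M)$ a good rational parameter or coordinate change exists, and small $q$ is trivial), which is the correct resolution, so the structure is sound even if this step is the one that a careful write-up would have to expand. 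In short: a correct sketch, faithful to the cited source, with the arithmetic Bertini input stated but not proved.
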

We are now ready to begin the setup for \cref{prop:liminf-ark}. Let $A = \F_q[t]$ for a finite field $\F_q$. Let $K = \F_q(t) = \text{Frac}(A)$. A variety $X/K$ can be viewed as the generic point of a finite type $A$-scheme $\mathfrak X$. In this subsection, we'll build up results that will allow us to connect properties of $X$ to properties of closed fibers of $\mathfrak X$ over $\Spec(A)$. 

The next two propositions are standard results on constructible properties of schemes. 
\begin{lemma}[\cite{stacks-project}, \href{https://stacks.math.columbia.edu/tag/0559}{Tag 0559}]\label{lemma:geom-fiber} 
    Let $X\to Y$ be a morphism of schemes. Assume
    \begin{itemize}
    \itemsep0em 
        \item $Y$ is irreducible with generic point $\eta$,
        \item $X_\eta$ is geometrically irreducible,
        \item $X\to Y$ is finite type.
    \end{itemize}
    Then there exists a nonempty open $V\subseteq Y$ such that $X_V\to V$ has geometrically irreducible fibers.
    \end{lemma}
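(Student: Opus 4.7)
The strategy is to prove the stronger statement that the locus
\[
E \;=\; \{\, y \in Y : X_y \text{ is geometrically irreducible} \,\}
\]
is \emph{constructible} in $Y$, and then conclude by the generic point. For the conclusion: any constructible subset containing the generic point of an irreducible scheme contains a nonempty open set. Indeed, writing $E$ as a finite union of locally closed pieces $U_i \cap Z_i$ (with $U_i$ open and $Z_i$ closed), the piece containing $\eta$ must have $Z_i = Y$, since $Y$ is the only closed subset of $Y$ containing $\eta$; the corresponding $U_i \subseteq E$ is then the desired nonempty open set $V$.

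To establish constructibility of $E$, I would first reduce to the affine case. Since the statement is local on $Y$, assume $Y = \Spec R$ with $R$ a domain and $K = \mathrm{Frac}(R) = \kappa(\eta)$; since $X \to Y$ is finite type and hence quasi-compact, cover $X$ by finitely many affines and assume $X = \Spec A$ with $A$ a finitely generated $R$-algebra. I would then use the algebraic criterion that, for a field $k$ and a finitely generated $k$-algebra $B$, the scheme $\Spec B$ is geometrically irreducible over $k$ if and only if (a) $\Spec B$ is irreducible and (b) the separable algebraic closure of $k$ in $\mathrm{Frac}(B_{\mathrm{red}})$ equals $k$.

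Constructibility of the fiberwise irreducibility locus follows from Chevalley's theorem applied to each irreducible component of $X$: a component dominating $Y$ meets fibers over a dense open of $Y$, while a component not dominating $Y$ lies over a proper closed subset, and combining these finitely many constructible contributions yields constructibility of the set where exactly one component contributes positively. Constructibility of the locus for which condition (b) holds is obtained by spreading out: the finite algebraic data encoding the (by hypothesis trivial) separable closure of $K$ in $\mathrm{Frac}(A/\sqrt{0})$ consists of finitely many polynomial identities and nonvanishing conditions over $R$, which after inverting finitely many elements of $R$ descend to an analogous statement at every closed fiber.

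The main obstacle is the careful tracking of \emph{geometric} irreducibility (as opposed to mere irreducibility) through the spreading-out, especially in positive characteristic where purely inseparable extensions intervene and must be distinguished from separable ones when manipulating condition (b). This is where the bulk of the technical content lies; the reductions to the affine case, the assembly of the two constructible loci, and the final passage from constructibility to an open neighborhood of $\eta$ are all standard once this step is in place.
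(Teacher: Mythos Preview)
The paper does not supply its own proof of this lemma; it is quoted with a citation to the Stacks Project and used as a black box. Your overall strategy---show that the locus $E = \{y \in Y : X_y \text{ is geometrically irreducible}\}$ is constructible, then observe that a constructible set containing the generic point of an irreducible scheme contains a nonempty open---is precisely the standard route recorded there, so at the level of architecture there is nothing to compare.

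One genuine gap in your sketch, however: the step ``cover $X$ by finitely many affines and assume $X = \Spec A$'' does not work. Geometric irreducibility of a fiber $X_y$ is a global property of $X_y$ and cannot be checked on the pieces of an affine cover (two disjoint copies of $\A^1$ and $\P^1$ both have irreducible standard affine charts). The constructibility argument must keep $X$ intact; the usual fix is to reduce only $Y$ to the affine Noetherian case, then run Noetherian induction on $Y$ using generic flatness and the behavior of the number of geometric irreducible components under flat base change. Your algebraic criterion (irreducibility plus separable closure of $k$ in the function field being trivial) is correct and does enter the argument, but it is applied to the total space over the generic point, not affine-locally on $X$. This is a structural issue with the reduction, separate from the positive-characteristic subtleties you flag at the end.
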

        \begin{lemma}[\cite{stacks-project},\href{https://stacks.math.columbia.edu/tag/05F7}{Tag 0559}]\label{lemma:dim-fiber}
        Let $X\to Y$ be a morphism of schemes. Assume that
        \begin{itemize}
        \itemsep0em 
            \item $Y$ is irreducible with generic point $\eta$,
            \item $\dim X_\eta = d$,
            \item $X\to Y$ is finite type. 
        \end{itemize}
        Then there exists a non-empty open $V\subseteq Y$ such that the fibers of $X_V\to V$ are $d$-dimensional.
    \end{lemma}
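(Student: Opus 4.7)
My plan is to reduce to affine schemes and apply Noether normalization to the generic fiber, then spread the decomposition out to an open neighborhood of $\eta$.

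First, since the conclusion concerns only a neighborhood of $\eta$, I would shrink $Y$ to an affine open so that $Y = \Spec A$ with $A$ a domain having fraction field $K = \kappa(\eta)$. Decompose $X$ into its (finitely many) irreducible components $X_1, \ldots, X_n$. For each $i$, the closure $\overline{f(X_i)}$ of the image is either all of $Y$ or a proper closed subset; removing the proper ones from $Y$ yields an open $U \ni \eta$ over which only the dominating components contribute to any fiber. Writing $d_i = \dim(X_i)_\eta$ for each dominating component, we have $d = \max_i d_i$, so it suffices to find, for each dominating $X_i$, an open neighborhood of $\eta$ on which the fibers of $X_i \to Y$ have dimension exactly $d_i$, and then intersect.

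Next, I would fix a dominating component and cover it by an affine open $\Spec B$; then $B$ is a finitely generated $A$-algebra, a domain with $A \hookrightarrow B$, and $\dim(B \otimes_A K) = d_i$. Noether normalization of the generic fiber produces algebraically independent $t_1, \ldots, t_{d_i} \in B \otimes_A K$ giving a finite injection $K[t_1, \ldots, t_{d_i}] \hookrightarrow B \otimes_A K$. Lifting the $t_j$ to $B$ and clearing denominators in the finitely many integrality relations for a generating set of $B \otimes_A K$ over $K[t_1, \ldots, t_{d_i}]$, there exists $a \in A \setminus \{0\}$ such that $A_a[t_1, \ldots, t_{d_i}] \hookrightarrow B_a$ is a finite injection of $A_a$-algebras. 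For any $y \in D(a)$, base change along $A_a \to \kappa(y)$ preserves finiteness and, since the polynomial ring is free over $A_a$, also preserves injectivity, yielding a finite injection $\kappa(y)[t_1, \ldots, t_{d_i}] \hookrightarrow B_a \otimes_{A_a} \kappa(y)$. A finite injection of rings induces a surjective morphism on $\Spec$ that preserves Krull dimension, so this affine chart of the fiber over $y$ has dimension exactly $d_i$.

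Intersecting the resulting open sets $D(a_i)$ across all dominating components, together with $U$, produces the desired $V \subseteq Y$. The main subtlety is obtaining the lower bound $\dim X_y \geq d$: the upper bound $\dim X_y \leq d$ follows cheaply from Chevalley's upper semi-continuity of fiber dimension (so one could avoid reproving that), but the lower bound is what requires the explicit spread-out of Noether normalization, with preservation of injectivity under base change resting on flatness of the polynomial ring over $A_a$.
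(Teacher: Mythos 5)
The paper does not prove this lemma; it simply cites the Stacks Project, so there is no in-paper argument to compare against. Your strategy (Noether normalization of the generic fiber, spreading out over a dense open of $\Spec A$, and Chevalley semicontinuity for the upper bound) is the standard one. There is, however, a genuine gap at exactly the point you flag as the main subtlety. You claim that base change along $A_a\to\kappa(y)$ preserves the injectivity of $A_a[t_1,\dots,t_{d_i}]\hookrightarrow B_a$ ``since the polynomial ring is free over $A_a$.'' Freeness (or flatness) of the \emph{subring} is not the relevant condition. Writing $P=A_a[t_1,\dots,t_{d_i}]$ and applying $-\otimes_{A_a}\kappa(y)$ to $0\to P\to B_a\to B_a/P\to 0$, the kernel of $P\otimes\kappa(y)\to B_a\otimes\kappa(y)$ is the image of $\operatorname{Tor}_1^{A_a}(B_a/P,\kappa(y))$; this vanishes when the \emph{cokernel} $B_a/P$ is $A_a$-flat, not when $P$ is. As a sanity check, over $A=k[s]$ the inclusion $sA\hookrightarrow A$ has free source, yet becomes the zero map after $\otimes_A\, k$.

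The conclusion you want is nonetheless true, but for a different reason, and one can sidestep the injectivity question entirely. Since $A_a[t_1,\dots,t_{d_i}]\hookrightarrow B_a$ is a finite injection of domains, lying over (Cohen--Seidenberg) gives that $\Spec B_a\to\Spec A_a[t_1,\dots,t_{d_i}]$ is surjective. Restricting a surjection to the fiber over $y\in\Spec A_a$ yields a finite surjection $\Spec\bigl(B_a\otimes_{A_a}\kappa(y)\bigr)\to\Spec\kappa(y)[t_1,\dots,t_{d_i}]$, and finite surjections preserve dimension, so the fiber has dimension exactly $d_i$ --- no separate injectivity check is needed. (If you do want the ring map to stay injective, note that surjectivity on $\Spec$ forces the kernel into the nilradical of the domain $\kappa(y)[t_1,\dots,t_{d_i}]$, hence to vanish.) Alternatively, one may invoke generic flatness to shrink $D(a)$ further so that $B_a/A_a[t_1,\dots,t_{d_i}]$ is $A_a$-flat. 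Either of these repairs the step; the justification you actually gave does not.
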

    \begin{lemma}\label{lemma:lying-over}
        Let $A = \F_q[t]$, and set $K:= \F_q(t)$. Let $L/K$ be a finite extension and let $B$ denote the integral closure of $A$ in $L$. Then there are infinitely many pairs $(\pf\in \Spec A, \qf\in \Spec B)$ such that $\qf\cap A = \pf$ and $A/\pf\to B/\qf$ is an isomorphism. In this construction, each $\pf$ occurs in at most one pair.
    \end{lemma}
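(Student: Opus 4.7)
\emph{Proof proposal.} The plan is to reduce to the case where $L/K$ is separable and then apply the Lang--Weil bound to the affine curve $Y = \Spec B$ in order to produce $\F_{q^n}$-rational points of $Y$ whose image in $X = \Spec A = \A^1_{\F_q}$ is a closed point of degree exactly $n$. Any such point will yield a pair $(\pf,\qf)$ with $A/\pf \cong B/\qf = \F_{q^n}$, and varying $n$ will produce infinitely many such pairs with distinct $\pf$'s.

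\emph{Reduction to the separable case.} Let $L^{\mathrm{sep}} \subseteq L$ be the separable closure of $K$ in $L$, and let $B^{\mathrm{sep}}$ denote the integral closure of $A$ in $L^{\mathrm{sep}}$. Since $L/L^{\mathrm{sep}}$ is purely inseparable, the induced map $\Spec B \to \Spec B^{\mathrm{sep}}$ is a bijection on primes, and the residue extension at each lifted pair is purely inseparable; as these residue fields are finite and hence perfect, each such extension is trivial. Thus any pair $(\pf,\qf)$ with $A/\pf \cong B^{\mathrm{sep}}/\qf$ lifts to a pair with the same property for $B$, and I may assume henceforth that $L/K$ is separable.

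\emph{Counting and producing the pair.} Because $\F_q[t]$ is Japanese and $L/K$ is separable, $B$ is finite over $A$ and $Y = \Spec B$ is a geometrically reduced, irreducible affine curve over $\F_q$. Let $\kappa = \F_{q^c}$ be the algebraic closure of $\F_q$ in $L$; then $Y_{\overline{\F_q}}$ has exactly $c$ geometrically irreducible components. Applying the Weil bound to the smooth projective compactification $\bar Y$ and subtracting the $O(1)$ contribution of $\bar Y \setminus Y$,
\[
|Y(\F_{q^n})| = c\, q^n + O(q^{n/2}) \quad \text{for every } n \text{ with } c \mid n,
\]
while $|Y(\F_{q^n})| = 0$ otherwise (since $\kappa \not\hookrightarrow \F_{q^n}$). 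On the other hand, the $\F_{q^n}$-points of $X$ supported on closed points of degree $<n$ number $\sum_{m \mid n,\, m<n} q^m \ll q^{n/2}$, and each such point has at most $d := [L:K]$ preimages in $Y(\F_{q^n})$ under $\pi : Y \to X$. Thus for $n$ large and divisible by $c$, there exists $y \in Y(\F_{q^n})$ whose image $\pf = \pi(y)$ is a closed point of $X$ of degree exactly $n$. The closed point $\qf \subset B$ through which $y$ factors has residue field $\F_{q^m}$ with $m \mid n$ and $m \ge n$ (since $A/\pf = \F_{q^n}$ embeds into $B/\qf$), forcing $m = n$ and $A/\pf \cong B/\qf$. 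Choosing one $\qf$ above each of the resulting $\pf$'s enforces the uniqueness condition.

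\emph{Main obstacle.} I expect the trickiest part to be the control over the leading constant $c$ in the Lang--Weil estimate, and in particular the verification that the $c q^n$ main term dominates both the $O(q^{n/2})$ error from curve-point-counting and the $d\cdot O(q^{n/2})$ contribution from preimages of lower-degree points of $X$. The separable-reduction step, while conceptually clean, is essential to this argument: the Lang--Weil input requires geometric reducedness of $Y$, and without the reduction one would need to pass to a reduced model and handle the resulting nilpotents with care.
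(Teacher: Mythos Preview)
Your argument is correct and takes a genuinely different route from the paper. The paper proves this lemma by constructing, for infinitely many primes $\pi\in A$, an explicit field embedding $L\hookrightarrow K_\pi$ into the $\pi$-adic completion (Lemma~A.4, following Cassels): one writes $L=K(\beta)$ in the separable case, finds primes modulo which the minimal polynomial of $\beta$ has a simple root, and lifts by Hensel's lemma; the inseparable case is handled by precomposing with a Frobenius iterate. Your approach instead counts $\F_{q^n}$-points on the curve $Y=\Spec B$ via the Weil bound and subtracts the contribution from fibers over low-degree points of $\A^1$, which is the more ``geometric'' proof and meshes naturally with the Lang--Weil machinery already invoked elsewhere in the paper. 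The paper's approach is more elementary and self-contained; yours is shorter once Lang--Weil is taken as given.

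One minor remark: your separable reduction, while correct, is not actually needed for the reason you state. Since $\F_q$ is perfect, every extension of $\F_q$ is separable, so $L/\F_q$ is automatically separable and $Y=\Spec B$ is geometrically reduced over $\F_q$ regardless of whether $L/K$ is separable. The Lang--Weil input therefore applies directly to $B$ without first passing to $B^{\mathrm{sep}}$. Your reduction is harmless, but the justification in your ``main obstacle'' paragraph slightly misidentifies where separability would matter.
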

    \begin{proof}
        By \cref{lemma:field-emb}, there is an infinite collection $P$ of prime elements $\pi\in A$ such that for each $\pi$, there is a field embedding $\a_\pi: L\to K_\pi,$ where $K_\pi$ is the $\pi$-adic completion of $K.$ Let $\pi\in P$. Since $B$ is integral over $A$ and $A^{\wedge \pi}$ is integrally closed in $K_\pi$, we have $\a_\pi(B)\subseteq A^{\wedge \pi}$.
        
        Consequently, we consider the ring map $\b_\pi: B\xrightarrow{\a_\pi}A^{\wedge\pi}\to A/\pi$. Let $p^e$ denote the inseparable degree of $L/K$. Since $\b_\pi$ extends the Frobenius iterate $F^e: A\to A$, the image of $\b_\pi$ contains $(A/\pi)^{p^e}$. As $A/\pi$ is a finite field, we have that $\b_\pi$ is surjective, hence $B/\ker \b_\pi\cong A/\pi$. As $\pi\in \ker \b_\pi$, the prime ideal $\ker \b_\pi$ lies over $(\pi)$, so the infinite set $\{((\pi), \ker \b_\pi):\pi\in P\}$ satisfies the desired properties.
    \end{proof}
    
    \begin{lemma}\label{lemma:dense-geom-irred-fibers}
        Let $A = \F_q[t]$, and set $K:= \F_q(t)$. Let $\Xf$ be a finite-type $A$-scheme with generic fiber $X$. There are infinitely many primes $\pf\in \Spec A$ such that the irreducible components of the fiber $\Xf_\pf$ are geometrically irreducible and also \linebreak $\dim \Xf_\pf = \dim X$.
    \end{lemma}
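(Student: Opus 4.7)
The plan is to reduce to a finite separable extension of $K = \F_q(t)$ over which every irreducible component of the generic fiber $X$ becomes geometrically irreducible, apply the generic constructibility results \cref{lemma:geom-fiber,lemma:dim-fiber} there, and then descend back to $A = \F_q[t]$ using \cref{lemma:lying-over}.

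We may assume that $\Xf$ is reduced. Write $X = X_1 \cup \cdots \cup X_k$ for the decomposition of the generic fiber into irreducible components. Choose a finite separable extension $L/K$ such that each irreducible component of $(X_i)_L$ is geometrically irreducible for every $i$; such an $L$ exists because an integral $K$-scheme $X_i$ is geometrically irreducible if and only if the separable closure of $K$ inside $K(X_i^{\mathrm{red}})$ equals $K$, so any common finite extension of the separable closures of $K$ in these function fields will do. Let $B$ denote the integral closure of $A$ in $L$, and let $Y_{i,1},\ldots,Y_{i,m_i}$ be the geometrically irreducible components of $(X_i)_L$. Denote by $\Yf_{i,j}$ the scheme-theoretic closure of $Y_{i,j}$ inside $\Xf_B := \Xf \times_{\Spec A} \Spec B$; then $\Yf_{i,j} \to \Spec B$ is finite type, dominates $\Spec B$, and has geometrically irreducible generic fiber of dimension $\dim X_i$.

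Applying \cref{lemma:geom-fiber} to each $\Yf_{i,j}$ produces a non-empty open $V_{i,j} \subseteq \Spec B$ over which its fibers are geometrically irreducible, and \cref{lemma:dim-fiber} produces a non-empty open $V'_{i,j}$ over which these fibers have dimension $\dim X_i$. Set $V := \bigcap_{i,j}(V_{i,j} \cap V'_{i,j})$, still a non-empty open, and invoke \cref{lemma:lying-over} to obtain an infinite family of pairs $(\pf,\qf)$ with $\qf \cap A = \pf$, each $\pf$ appearing at most once, and $A/\pf \xrightarrow{\sim} B/\qf$; since $\Spec B \setminus V$ is a finite set of closed points (as $\dim B = 1$), only finitely many such pairs are excluded. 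For any surviving pair, the residue field isomorphism yields $\Xf_\pf \xrightarrow{\sim} (\Xf_B)_\qf = \bigcup_{i,j}(\Yf_{i,j})_\qf$, and each $(\Yf_{i,j})_\qf$ is geometrically irreducible of dimension $\dim X_i$. The irreducible components of $\Xf_\pf$ are therefore exactly the maximal pieces among the $(\Yf_{i,j})_\qf$, all of which are geometrically irreducible, and $\dim \Xf_\pf = \max_i \dim X_i = \dim X$.

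The main obstacle is accounting for irreducible components of $\Xf$ which do not dominate $\Spec A$: a priori these could contribute extra components to $\Xf_\pf$ that fail to be geometrically irreducible or have the wrong dimension. However, there are only finitely many such non-dominating components, each supported over a proper closed subset of $\Spec A$, so we can exclude the finitely many offending $\pf$ and the remaining infinite family still satisfies the conclusion. A secondary subtlety is that $K = \F_q(t)$ is imperfect; fortunately, geometric irreducibility (unlike geometric integrality) is preserved under purely inseparable base change, so a finite separable $L/K$ is enough to make all components geometrically irreducible, which is what the proof requires.
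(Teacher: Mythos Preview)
Your proof is correct and follows the same strategy as the paper: pass to a finite extension $L/K$ splitting the generic fiber into geometrically irreducible pieces, apply \cref{lemma:geom-fiber} and \cref{lemma:dim-fiber}, then descend via \cref{lemma:lying-over}. The one difference is that you apply the constructibility lemmas to the $\Yf_{i,j}\to\Spec B$ and use $A/\pf\cong B/\qf$ to identify $\Xf_\pf\cong(\Xf_B)_\qf$ directly, whereas the paper applies them over $\Spec A$ and then extracts $(\Xf_\pf)_{\text{red}}$ as a summand of $(\Yf_\pf)_{\text{red}}$ via the factorization $\pf B=\qf^{e_\pf}J_\pf$; your route is slightly cleaner and sidesteps the subtlety that the $Y_{i,j}$ are geometrically irreducible over $L$ but not over $K$.
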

    \begin{proof}
                Let $L$ be a finite extension of $K$ such that the irreducible components of $X\times_{\Spec K} \Spec L$ are geometrically irreducible. Let $B$ denote the integral closure of $A$ in $L$ and let $\Yf := \Xf\times_{\Spec A}\Spec B$. Consider the map $\Yf\to \Spec A$. The fiber of $\Yf$ over $K$ is given by
                \[
                \Yf\times_{\Spec A}K \cong \Xf\times_{\Spec A}\Spec (B\otimes_A K) \cong \Xf\times_{\Spec A}\Spec L.
                \]
                Consequently, we denote this fiber by $X_L$.
                
                Applying \cref{lemma:geom-fiber} once for each irreducible component of $X_
                L$, there is a nonempty open set $U\subseteq \Spec A$ such that the fibers of $\Yf$ over each $s\in U$ decompose into geometrically irreducible components. Moreover, by \cref{lemma:dim-fiber}, there exists a nonempty open set $V\subseteq \Spec A$ such that $\dim \Xf_s = \dim X$ for all $s\in V$. 

                Apply \cref{lemma:lying-over} to the extension $K/L$ to obtain an infinite set $Q$ of pairs $\{(\pf,\qf)\}$. Since $A$ is integral and 1-dimensional, the complement of $U\cap V$ in $\Spec A$ is finite, so we may replace $Q$ by an infinite subset $Q'\subseteq Q$ such that $\pf\in U\cap V$ for all $(\pf,\qf)\in Q'$. For each $(\pf,\qf)\in Q'$, factorize $\pf B = \qf^{e_\pf}J_\pf$ for some ideal $J_\pf\subseteq B$ coprime to $\qf$. The reduced fiber of $\Yf$ over $\Spec A/\pf$ is given by
            \begin{align*}
                (\Yf_{\pf})_{\text{red}}&= (\Yf\times_{\Spec A}\Spec (A/\pf))_{\text{red}}\\
                 &\cong (\Xf\times_{\Spec A}(\Spec B/\qf^{e_\pf}\sqcup \Spec B/J_\pf)_{\text{red}} \\&\cong (\Xf_\pf)_{\text{red}}\sqcup (\Xf\times_{\Spec A} \Spec B/J_\pf))_{\text{red}}
            \end{align*}
            As $\pf\in U$, the irreducible components of $\Yf_\pf$ are geometrically irreducible. In particular, as $(\Xf_\pf)_{\text{red}}$ is a connected component of $(\Yf_\pf)_{\text{red}}$, the irreducible components of $(\Xf_\pf)_{\text{red}}$ are geometrically irreducible, hence the same holds for $\Xf_\pf$. Moreover, by assumption that $\pf\in V$, we have $\dim \Xf_\pf = \dim X$.
            \end{proof}
    
    Combining the above lemma with the two point-counting estimates, we have the following.
                \begin{lemma}\label{lemma:reduction}
        Let $A = \F_q[t]$, and set $K:= \F_q(t)$. Let $\Xf$ be an affine, finite-type $A$-scheme with generic fiber $X$. Then we have
            \[
    \limsup_{\pf\in \max\Spec A, |A/\pf|\to \infty} \frac{\log|\Xf_\pf(A/\pf)|}{\log |A/\pf|} = \dim X.
    \]
        \end{lemma}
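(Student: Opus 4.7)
The plan is to establish the two inequalities $\limsup \le \dim X$ and $\limsup \ge \dim X$ separately, leveraging Schwarz--Zippel (\cref{lemma:schwarz-zippel}) for the upper bound and Lang--Weil (\cref{prop:lang-weil}) for the lower bound. Both steps rely on a uniform complexity bound for the fibers: fix once and for all a presentation of $\Xf$ as a closed subscheme of some $\A^N_A$ cut out by finitely many polynomials of bounded degree; reducing these polynomials modulo $\pf$ gives defining equations for $\Xf_\pf$ whose number, degrees, and ambient dimension are independent of $\pf$. Hence, in the sense of \cref{defn:complexity}, the complexities of the family $\{\Xf_\pf\}_\pf$ are uniformly bounded by a constant depending only on $\Xf$. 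I expect this uniformity observation to be the only subtle point; once it is in hand, the proof is an assembly of \cref{lemma:schwarz-zippel,prop:lang-weil,lemma:dim-fiber,lemma:dense-geom-irred-fibers}.

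For the upper bound, I apply \cref{lemma:dim-fiber} to the morphism $\Xf \to \Spec A$ (whose generic fiber has dimension $\dim X$) to produce a nonempty open $V \subseteq \Spec A$ on which $\dim \Xf_\pf = \dim X$. Since $\Spec A$ is one-dimensional, the complement $\Spec A \setminus V$ consists of finitely many closed points, and is in particular disjoint from all $\pf$ with $|A/\pf|$ sufficiently large. For $\pf \in V$, \cref{lemma:schwarz-zippel} combined with the uniform complexity bound yields
\[
|\Xf_\pf(A/\pf)| \ll |A/\pf|^{\dim X},
\]
so dividing logarithms by $\log |A/\pf|$ gives a value $\le \dim X + o(1)$ as $|A/\pf| \to \infty$. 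Hence $\limsup \le \dim X$.

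For the lower bound, I invoke \cref{lemma:dense-geom-irred-fibers} to produce an infinite set $S$ of primes $\pf \in \max\Spec A$ such that $\dim \Xf_\pf = \dim X$ and every irreducible component of $\Xf_\pf$ is geometrically irreducible. Because $A = \F_q[t]$ has only finitely many primes of any fixed degree, the residue field sizes $\{|A/\pf|\}_{\pf \in S}$ are unbounded, so I extract a subsequence along which $|A/\pf| \to \infty$. For each such $\pf$, \cref{prop:lang-weil} gives
\[
|\Xf_\pf(A/\pf)| = \bigl(c_\pf + O(|A/\pf|^{-1/2})\bigr)\,|A/\pf|^{\dim X},
\]
where $c_\pf \ge 1$ denotes the number of top-dimensional geometrically irreducible components of $\Xf_\pf$. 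Taking logarithms and dividing by $\log|A/\pf|$, the ratio tends to $\dim X$ along this subsequence, which forces $\limsup \ge \dim X$ and completes the proof.
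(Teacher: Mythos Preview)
Your proof is correct and follows essentially the same route as the paper: the upper bound via \cref{lemma:dim-fiber} plus \cref{lemma:schwarz-zippel}, and the lower bound via \cref{lemma:dense-geom-irred-fibers} plus \cref{prop:lang-weil}. Your explicit remarks on the uniform complexity of the fibers and on the unboundedness of $|A/\pf|$ along the infinite set $S$ are useful elaborations that the paper leaves implicit.
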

        \begin{proof}
            By \cref{lemma:dim-fiber}, we have $\dim \Xf_\pf = \dim X$ for all but finitely many primes $\pf$. As the complexity of $\Xf_\pf$ is upper-bounded by the complexity of $X$, by \cref{lemma:schwarz-zippel} we have
            \[
                \limsup_{\pf\in \max\Spec A, |A/\pf|\to \infty} \frac{\log |\Xf_{\pf}(A/\pf)|}{\log|A/\pf|} \leq \dim X.
            \]
            In other direction, \cref{lemma:dense-geom-irred-fibers} implies that for infinitely many primes $\pf\in \Spec A$, $\dim \Xf_{\pf} = \dim X$ and $\Xf_{\pf}$ has a top-dimensional geometrically irreducible component. By \cref{prop:lang-weil} we have the following asymptotic, which completes our proof.
            \[
            \frac{\log |\Xf_{\pf}(|A/\pf|)|}{\log|A/\pf|} \geq  \dim X + \frac{\log (1 + O_X(|A/\pf|^{-1/2}))}{\log|A/\pf|}. 
            \]
        \end{proof}

The proof of \cref{prop:liminf-ark} is an immediate consequence:
\begin{proof}
    Apply \cref{lemma:reduction} to $S_F$ to obtain
    \[
        \limsup_{|A/\pf|\to \infty} \frac{\log|S_{F\otimes_AA/\pf}(A/\pf)|}{\log |A/\pf|}  = \dim S_{F\otimes_A K}.
    \]
    Subtract both sides from $n(d-1)$ to conclude the result.
\end{proof}

\section{Proof of \texorpdfstring{\cref{thm:ark-grk}}{Theorem 1.2} in positive characteristic}

Fix a $d$-tensor $F$ over $K = \F_q(t)$. To bound the analytic rank of $F$ in terms of the geometric rank, one proceeds as follows:
\begin{enumerate}
    \item First, change bases so that $F$ has coefficients in $A = \F_q[t]$. The variety \[\mathfrak S_F = (F(x,\cdot)\equiv 0)\subseteq \A_A^{n(d-1)}\] has generic fiber $S_F$, giving an integral model for $S_F$ over $A$.
    \item Using \cref{prop:liminf-ark}, one shows that many fibers of $\mathfrak S_F$ over $\Spec A/\pf$ have many $A/\pf$-rational points.
    \item Next, using a scaling result, one can promote some of these $A/\pf$-points of $\mathfrak S_F$ to $A$-points  of small height. 
    \item Using Noether normalization, these $A$-points of small height give a lower bound on $\dim S_F$ in terms of $\grk(F)$, which in turn yields the upper bound $\ark(F)\leq (d-1)\grk(F)$.
    \item For finite fields, we extend to $\F_q(t)$, apply the above argument, and then bound the size of fibers of the map evaluating at $t=0.$
\end{enumerate}
By induction on the transcendence degree, one can deduce a bound for $\F_q(t_1,\dots, t_r)$.
We begin with a lemma, inspired by a covering argument in \cite{Sch}, that will be crucial for step (3).

\begin{lemma}[Scaling]\label{lem:scale-charp}
    Let $H,H'$ be finite abelian groups and $G:H^{d-1}\to H'$ a multi-linear map. Suppose $\varphi:H\to L$ is a group homomorphism and define
    \[
    N^y(G) =  |\{x\in H^{d-1}: G(x) = 0, \varphi(x_i) = y_i\}|
    \]
    for any $y = (y_1,\dots, y_{d-1})\in L^{d-1}.$
    Then
    \begin{equation}\label{eq:fiber-size}
        N^y(G) \le N^0(G).
    \end{equation}  
    In particular, for any subgroup $H_0\subset H,$ we have 
    \[
    |\{x\in H^{d-1}: G(x) = 0\}|\le [H:H_0]^{d-1}\cdot \left|\{x\in H_0^{d-1}: G(x) = 0\}\right|.
    \]
\end{lemma}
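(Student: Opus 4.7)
My approach is to exploit the fact that although $G$ is multi-linear and the constraint $\varphi(x_i)=y_i$ is affine, fixing all but one coordinate reduces things to a linear-algebra statement about subgroups and cosets. Specifically, I would start by fixing arbitrary $x_2,\dots,x_{d-1}\in H$ and considering the \emph{slice}
\[
\psi \colon H \to H', \qquad \psi(x_1) = G(x_1,x_2,\dots,x_{d-1}).
\]
By multi-linearity $\psi$ is a group homomorphism, so $K := \ker\psi$ is a subgroup of $H$. Then for any $y_1\in L$, the set of valid $x_1$ is $K\cap \varphi^{-1}(y_1)$, which is either empty or a coset of $K\cap \ker\varphi$; either way its cardinality is at most $|K\cap \ker\varphi|$, with equality when $y_1=0$. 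This is the one-coordinate version of the lemma.

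To deduce the full inequality \eqref{eq:fiber-size}, I would interpolate between $N^y(G)$ and $N^0(G)$ by flipping one constraint at a time. Define, for $0\le k\le d-1$,
\[
M_k \;=\; \bigl|\{x\in H^{d-1}: G(x)=0,\ \varphi(x_i)=0\text{ for }i\le k,\ \varphi(x_i)=y_i\text{ for }i>k\}\bigr|,
\]
so $M_0=N^y(G)$ and $M_{d-1}=N^0(G)$. The inequality $M_{k-1}\le M_k$ is exactly the one-coordinate statement applied in the $k$-th slot: condition on the other $d-2$ coordinates (whose $\varphi$-values match in both counts) and invoke the bound above with the roles of $x_k$ and the varying target $y_k$ vs.\ $0$. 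Telescoping yields $N^y(G)\le N^0(G)$.

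For the ``in particular'' clause, I would specialize to $L=H/H_0$ with $\varphi$ the quotient map, so that $\ker\varphi=H_0$ and hence $N^0(G)=|\{x\in H_0^{d-1}: G(x)=0\}|$. Summing over all cosets gives
\[
|\{x\in H^{d-1}: G(x)=0\}| \;=\; \sum_{y\in L^{d-1}} N^y(G) \;\le\; |L|^{d-1}\,N^0(G) \;=\; [H:H_0]^{d-1}\cdot |\{x\in H_0^{d-1}: G(x)=0\}|,
\]
as desired.

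I do not anticipate a serious obstacle. The only mild subtlety is organizing the coordinate-by-coordinate replacement so that the constraints on the other coordinates remain identical on both sides of each step (which is why the hybrid counts $M_k$ are introduced rather than trying to compare $N^y$ directly to $N^0$ in one stroke). Everything else is purely a matter of combining ``kernel of a homomorphism'' with ``intersection of a subgroup and a coset''.
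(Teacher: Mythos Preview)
Your proposal is correct and follows essentially the same approach as the paper: both arguments reduce to the one-variable observation that for a homomorphism $\psi:H\to H'$ the fiber $\ker\psi\cap\varphi^{-1}(y_1)$ is either empty or a coset of $\ker\psi\cap\ker\varphi$, and then replace the constraints $\varphi(x_i)=y_i$ by $\varphi(x_i)=0$ one coordinate at a time. The only cosmetic difference is that the paper packages the coordinate-by-coordinate replacement as an induction on $d$ (handling the first $d-2$ coordinates via the inductive hypothesis and the last via the base case), whereas you telescope through the hybrid counts $M_k$ directly.
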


\begin{proof}
    The statement about subgroups is deduced by taking $\varphi$ to be the projection $\varphi:H\to H/H_0$ and then summing over all $y\in (H/H_0)^{d-1}.$ We prove inequality \eqref{eq:fiber-size} by induction on $d.$
    For the base case $d=2,$ there are two possibilities. If $N^y(G) = 0$, then there is nothing to prove. Otherwise, there exists some $x_0\in H$ with $G(x_0) = 0$ and $\varphi(x_0) = y.$ In this case,
    \[
    N^y(G) =\left| x_0+\{x\in H: G(x) = 0, \varphi(x) = 0\} \right| = N^0(G).
    \]
    
    For general $d>2,$ we have
    \begin{align*}
            N^y(G) &= \sum_{x\in H, \ \varphi(x_{d-1})=y_{d-1}} N^{(y_1,\ldots,y_{d-2})} (G(\cdot, x)) \le \sum_{x\in H, \ \varphi(x_{d-1})=y_{d-1}} N^0 (G(\cdot,x)) \\
         &=  \sum_{x'\in H^{d-2}, \ \varphi(x'_i)=0} N^{y_{d-1}} (G(x',\cdot)) \le \sum_{x'\in H^{d-2}, \ \varphi(x'_i)=0} N^0 (G(x',\cdot)) = N^0(G),
    \end{align*}
   where the first inequality follows from the inductive hypothesis and the second one from the base case $d=2.$
\end{proof}

\subsection{Infinite Fields}

We now define a quantity which refines the notion of analytic rank of a $d$-tensor over a rational function field.
\begin{definition}\label{defn:height-rank-q}
    Let $A = \F_q[t_1,\dots, t_r]$ and let $K = \F_q(t_1,\dots, t_r)$. Let $F: (A^n)^d\to A$ be a multi-linear form. Define $G: (A^n)^{d-1}\to \Hom_A(A^n, A)$ by $G(x) = F(x, \cdot)$. For $R_1,\dots, R_r \in \Z^+$, let
    \[
    N_{R_1,\dots, R_r}(G) = |\{x\in (A^n)^{d-1}: G(x) = 0,\  \deg_{t_i}(x) < R_i\text{ for all }1\leq i\leq r.\}|
    \]
    We then define the following invariant of $F$:
    \[
    \gamma_q(F) = n(d-1) - \limsup_{R_1,\dots, R_r\to \infty}  \frac{\log_q N_{R_1,\dots, R_r}(G)}{R_1\dots R_r}.
    \]
\end{definition}
\begin{prop}\label{prop:F_q-estimate}
    Let $A,K,F$ and $G$ be as in \cref{defn:height-rank-q}. Then $$\gamma_q(F) \leq (d-1)^r\grk(F\otimes_A K).$$
\end{prop}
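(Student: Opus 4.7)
The argument proceeds by induction on $r$, following the four-step template sketched at the start of this section: choose a prime $\pi$ of appropriate degree, reduce $F$ modulo $\pi$, count zeros of the reduction, and apply the scaling lemma (Lemma \ref{lem:scale-charp}) to lift these to low-height exact zeros of $G$. For the base case $r = 1$, fix a monic irreducible $\pi \in A = \F_q[t]$ of degree $\ell$; along a suitable sequence of such $\pi$ (which exists by Proposition \ref{prop:liminf-ark}) we have $\ark(F \otimes A/\pi) \to \grk(F_K)$, hence $|S_F(\F_{q^\ell})| \geq q^{\ell (n(d-1) - \grk(F_K)) - o(\ell)}$. Identifying $A/\pi \cong \F_{q^\ell}$ with the representatives $A_{<\ell}$ via the basis $\{1, t, \ldots, t^{\ell - 1}\}$, I apply Lemma \ref{lem:scale-charp} to the subgroup inclusion $A_{<R}^n \hookrightarrow A_{<\ell}^n$ of index $q^{n(\ell - R)}$. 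Since $\deg_t G(x) \leq (d-1)(R-1) + \deg_t F$ for any $x \in (A_{<R}^n)^{d-1}$, choosing $\ell = (d-1)R + O_F(1)$ forces $\pi \mid G(x) \iff G(x) = 0$, so the scaling bound becomes $\log_q N_R(G) \geq (d-1)R(n - \grk(F_K)) - o(R)$, i.e., $\gamma_q(F) \leq (d-1) \grk(F_K)$.

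For the inductive step $r \geq 2$, write $A = B[t_r]$ with $B = \F_q[t_1, \ldots, t_{r-1}]$, $L = \text{Frac}(B)$, $K = L(t_r)$, and assume the proposition for $B$-forms. I pick a primitive polynomial $\pi(t_r) \in B[t_r]$ of $t_r$-degree $\ell$, irreducible in $L[t_r]$, so that $\bar A := A/\pi$ is a free $B$-module of rank $\ell$ with fraction field $L_\pi := \text{Frac}(\bar A)$ of degree $\ell$ over $L$. By upper semicontinuity of fiber dimensions for $S_F \to \Spec A$, I may further arrange $\grk(F \otimes L_\pi) = \grk(F_K)$. Weil restriction via the $B$-basis $\{1, t_r, \ldots, t_r^{\ell - 1}\}$ of $\bar A$ converts the reduced form $\bar F : (\bar A^n)^d \to \bar A$ into a tuple $\vec{\tilde F}$ of $\ell$ scalar $B$-multilinear forms on $B^{n\ell}$, and a codimension calculation paralleling that in Lemma \ref{lemma:separable} gives $\grk(\vec{\tilde F}_L) = \ell \cdot \grk(F_K)$. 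The inductive hypothesis (in tuple form) then bounds the count of zeros of $\vec{\tilde G}$ in $(B_{<R_1, \ldots, R_{r-1}}^{n\ell})^{d-1}$ below by $q^{R_1 \cdots R_{r-1}\,\ell\,[n(d-1) - (d-1)^{r-1} \grk(F_K)] - o(\cdot)}$. A second application of the scaling lemma to the inclusion $B_{<R_1, \ldots, R_{r-1}}^{n R_r} \hookrightarrow B_{<R_1, \ldots, R_{r-1}}^{n\ell}$ of index $q^{R_1 \cdots R_{r-1}\,n(\ell - R_r)}$, combined with $\ell = (d-1) R_r + O_F(1)$ to lift mod-$\pi$ zeros to exact zeros of $G$, yields $\log_q N_{R_1, \ldots, R_r}(G) \geq R_1 \cdots R_r\,[n(d-1) - (d-1)^r \grk(F_K)] - o(\cdot)$, i.e., $\gamma_q(F) \leq (d-1)^r \grk(F_K)$.

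The main obstacle will be the tuple issue: Weil restriction of the single $\bar A$-valued form $\bar F$ to $B$ produces $\ell$ scalar forms rather than one, so the induction only closes once Proposition \ref{prop:F_q-estimate} (and its supporting Proposition \ref{prop:liminf-ark}) is formulated and proved for tuples of multilinear forms---with $\gamma_q(\vec F)$ counting simultaneous bounded-height zeros and $\grk(\vec F_K)$ the codimension of the common $S$-variety. This extension should be routine, since the Lang--Weil and semicontinuity arguments of Section 4 apply to any affine variety (including the joint zero set $S_{\vec F}$), but it does require restating the relevant definitions and lemmas in vector form. A secondary technical point will be confirming that a dense open set of primitive polynomials $\pi(t_r) \in B[t_r]$ of each sufficiently large $t_r$-degree $\ell$ simultaneously satisfies irreducibility over $L$ and the genericity condition $\grk(F \otimes L_\pi) = \grk(F_K)$, ensuring that the required $\pi$ exist for infinitely many $\ell$.
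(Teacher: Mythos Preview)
Your base case is essentially identical to the paper's. The inductive step, however, is needlessly complicated, and the ``tuple obstacle'' you identify is one the paper avoids entirely by a simple device you overlooked.

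In the paper, the prime $\pf$ chosen for the inductive step is of the form $(\alpha(t_r))$ with $\alpha \in \F_q[t_r]$, i.e.\ $\alpha$ has coefficients in $\F_q$, \emph{not} in $B = \F_q[t_1,\ldots,t_{r-1}]$. Such primes are dense in $\Spec A$ (any nonzero $f\in A$ is divisible by only finitely many irreducible $\alpha\in\F_q[t_r]$), so one can find infinitely many of them in the open set $U$ where $\grk(F\otimes\kappa(\pf))=c$. With this choice, $A/\pf \cong \F_{q^e}[t_1,\ldots,t_{r-1}]$ is again a polynomial ring over a finite field, with fraction field $\F_{q^e}(t_1,\ldots,t_{r-1})$. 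The reduced form $F\otimes_A A/\pf$ is therefore a \emph{single} $d$-tensor over this ring, and the inductive hypothesis applies to it directly, yielding $\gamma_{q^e}(F\otimes A/\pf) \le (d-1)^{r-1}c$. One scaling step in the $t_r$-direction then finishes exactly as in your argument.

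Your route---picking a primitive $\pi\in B[t_r]$, Weil-restricting along $\bar A/B$, and invoking a tuple version of the proposition---could presumably be made to work, but it buys nothing and forces you to restate and reprove Definition~\ref{defn:height-rank-q}, Proposition~\ref{prop:liminf-ark}, and the present proposition for tuples. (Also, ``upper semicontinuity'' alone only gives $\grk(F\otimes L_\pi)\ge \grk(F_K)$; you need the constructibility statement of Lemma~\ref{lemma:dim-fiber} to get equality on a dense open.) The paper's choice of constant-coefficient $\pi$ sidesteps all of this.
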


Before proving this proposition, we will show how it implies \cref{thm:ark-grk} for rational function fields over $\F_q$.
\begin{proof}[Proof of \cref{thm:ark-grk}]
Let $K = \F_q(t_1,\dots, t_r)$ for $r\geq 1$.   A standard application of Noether normalization (\cref{lemma:height-points-p}) gives us $\ark(F)\leq \gamma_q(F)\le (d-1)^r\grk(F)$.
\end{proof}
Now, we return to the proof of \cref{prop:F_q-estimate}.
\begin{proof}
    Write $c = \grk(F\otimes_A K).$ For the base case $r = 1$, we apply \cref{prop:liminf-ark} to $F$. Let $\pf$ be a prime of $A$ such that $\ark_{A/\pf}(F\otimes_A A/\pf) \le c +\ep$. Let $G_\pf: (A^n)^{d-1}\to (A/\pf)^n$ denote the map obtained by reducing the coefficients of $G$ modulo $\pf$. For any $l>0$, let $H_l\subseteq A^n$ denote the subgroup of vectors of polynomials of degree strictly less than $l$.
    
    Let $e = \deg \pf$. By lifting each zero $\bar x\in ((A/\pf)^n)^{d-1}$ of $G\otimes_A A/\pf$ to the  unique representative $x\in (H_e)^{d-1}$ of $G_\pf$, we get
    \begin{equation}\label{eqn:N_e_G_pf}
        N_e(G_\pf) \ge q^{e(n(d-1) - c - \ep)}.
    \end{equation}
    By \cref{lem:scale-charp}, for any $0<\sigma<1,$ 
    \begin{equation}\label{eqn:scale-soln-p}
        N_e(G_\pf) \le [H_e: H_{\sigma e}]^{d-1} N_{\lceil{\sigma e}\rceil}(G_\pf) = q^{(e - \lceil \sigma e \rceil)n(d-1)} N_{\lceil{\sigma e}\rceil}(G_\pf)
    \end{equation}
    Combining \cref{eqn:N_e_G_pf} and \cref{eqn:scale-soln-p} yields
    \begin{equation*}
        N_{\lceil{\sigma e}\rceil}(G_\pf) \ge q^{\lceil\sigma e\rceil(n(d-1) - \frac{c + \ep}{\sigma })}.
    \end{equation*}
    For any vector $x$ with entries in $A,$ let $\norm{x}$ denote the maximum degree of the entries of $x$. There exists a constant $D$ depending on the coefficients of $G$ such that 
\[
    \norm{G(x)} \le D + (d-1)\norm{x}.
\]
    Consequently, when $D + (d-1)\norm{x} < e$, we have $G_\pf(x) = 0 \iff G(x) = 0$. It follows that for $\sigma <  \frac{1}{d-1}$ and sufficiently large $e,$ we have
    \begin{equation}
   N_{\lceil \sigma e \rceil}(G) \ge q^{\lceil\sigma e\rceil(n(d-1) - \frac{c + \ep}{\sigma })}.
    \end{equation}
    Taking $\ep\searrow 0$ and $\sigma\nearrow \frac{1}{d-1}$, we conclude that $\gamma_q(F) \le (d-1)c$ as desired. 
    
    Suppose instead $r\geq 2$. By \cref{lemma:dim-fiber}, there exists a nonempty open subset $U\subseteq \Spec A$ such that $\grk (F\otimes_A\kappa(\pf))= c$ for every $\pf\in U,$ where $\kappa(\pf) = \text{Frac}(A/\pf)$ is the residue field at $\pf.$ The principal prime ideals in $A$ of the form $(\alpha(t_r))$ form a dense subset of $\Spec A$, so $U$ contains infinitely many of these primes. In particular, let $\pf$ be any such prime with $\grk (F\otimes_A \kappa(\pf))= c,$ and write $e = \deg \pf$. As in the base case, let $G_\pf: (A^n)^{d-1} \to (A/\pf)^n$ denote the map obtained by reducing the coefficients of $G$ modulo $\pf$. 

    By the induction hypothesis, we have $\gamma_{q^e}(F\otimes_A A/\pf) \leq (d-1)^{r-1}c.$ We may therefore choose $R_1,\dots, R_{r-1}\in \Z^+$ arbitrarily large with 
    \begin{equation}
         N_{R_1,\dots, R_{r-1}, e}(G_\pf) \geq q^{eR_1\dots R_{r-1}(n(d-1) - (d-1)^{r-1}c -\ep)} .
    \end{equation}
    For $l_1,\ldots,l_r > 0,$ let $H_{l_1,\ldots,l_r} \subset A^n$ be the additive subgroup of vectors with entries whose degree in $t_i$ is less than $l_i$ for all $i.$  Applying \cref{lem:scale-charp} to the function \linebreak  $G_\pf: H_e\to (A/\pf)^n$ with $0<\sigma < \frac{1}{d-1}$ and arguing similarly to the base case, we obtain
    \begin{equation}
        N_{R_1,\dots, R_{r-1},\lceil \sigma e \rceil}(G) \geq q^{ \lceil \sigma e\rceil R_1\dots R_{r-1} \left( n(d-1) - \frac{(d-1)^{r-1}c + \ep}{\sigma} \right)}.
    \end{equation}
    Taking $\ep\searrow 0$ and $\sigma\nearrow\frac{1}{d-1}$ yields $\gamma_q(F) \le (d-1)^r c, $ as desired.
\end{proof}
\subsection{Finite fields}
\begin{lemma}[Fibers of evaluation at zero] \label{lemma:special-ff-scaling}
        Let $F:(\F_q^n)^d \to \F_q$ be a multi-linear map  and let $G:(\F_q[t]^n)^{d-1}\to \F_q[t]^n$ be the multi-linear map given by $G(x)_i = F_{\F_q(t)}(x,e_i).$ Then for any integer $R >1$ we have
    \[
    N_R(G) \le |S_F(\F_q)|\cdot N_{R-1}(G),
    \]
    where $S_F = (F(x,\cdot)\equiv 0).$ 
\end{lemma}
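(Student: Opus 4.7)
The plan is to apply the Scaling Lemma \cref{lem:scale-charp} with the finite abelian group $H = \F_q[t]^n_{<R}$ of vectors of polynomials of degree less than $R$, and the homomorphism $\varphi:H\to \F_q^n$ given by evaluation at $t=0$, i.e.\ $p(t)\mapsto p(0)$. Setting
\[
N^y(G) = \bigl|\{x\in H^{d-1}:G(x)=0,\ \varphi(x_i)=y_i\text{ for all }i\}\bigr|,
\]
we get the disjoint decomposition $N_R(G) = \sum_{y\in(\F_q^n)^{d-1}} N^y(G)$.

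The proof then rests on two observations. First, if $G(x)=0$ in $\F_q[t]^n$, then evaluating at $t=0$ in the identity $G(x)_i = F_{\F_q(t)}(x,e_i)$ gives $F(y_1,\ldots,y_{d-1},e_i)=0$ for every standard basis vector $e_i$, so $y\in S_F(\F_q)$. Hence $N^y(G)=0$ unless $y\in S_F(\F_q)$, and the sum collapses to $\sum_{y\in S_F(\F_q)}N^y(G)$. Second, I compute $N^0(G)$: the condition $\varphi(x_j)=0$ means each $x_j$ has zero constant term, so $x_j = t\cdot x_j'$ for a unique $x_j'\in \F_q[t]^n_{<R-1}$. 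By multilinearity $G(tx_1',\ldots,tx_{d-1}')=t^{d-1}G(x_1',\ldots,x_{d-1}')$, and since $\F_q[t]$ is a domain this vanishes iff $G(x')=0$. Therefore $N^0(G) = N_{R-1}(G)$.

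Combining these with the Scaling Lemma, which gives $N^y(G)\le N^0(G)$ for every $y$, yields
\[
N_R(G) = \sum_{y\in S_F(\F_q)} N^y(G) \le |S_F(\F_q)|\cdot N_{R-1}(G),
\]
as required. There is no genuine obstacle here; the only subtle point is choosing $\varphi$ so that simultaneously (i) the nonempty fibers are indexed precisely by $S_F(\F_q)$, and (ii) the zero fiber recovers $N_{R-1}(G)$ after rescaling by $t$. Evaluation at $t=0$ is the natural map that accomplishes both.
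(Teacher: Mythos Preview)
Your proof is correct and follows essentially the same approach as the paper: evaluate at $t=0$ to see that the nonempty fibers are indexed by $S_F(\F_q)$, apply \cref{lem:scale-charp} to bound each fiber by $N^0(G)$, and identify $N^0(G)$ with $N_{R-1}(G)$ via the bijection $x\mapsto x/t$. The only cosmetic difference is that you spell out the choice of $H$ and $\varphi$ explicitly, whereas the paper leaves this implicit.
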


\begin{proof}
    First note that $x(0)\in S_F(\F_q)$ for any $x\in (\F_q[t]^n)^{d-1}$ with $G(x)=0.$ For any $a\in S_F(\F_q),$ let 
    \[
    N_R^a= |\{x\in S_F(\F_q[t]),\ \deg(x) <R, x(0)=a\}|.
    \]
    By lemma \ref{lem:scale-charp}, we have $N_R^a(G) \le N_R^0(G) = N_{R-1}(G).$ The equality follows from the bijection given by $x\mapsto x/t.$
    Summing over $a\in S_F(\F_q)$ yields  
    \[
    N_R(G) = \sum_{a\in S_F(\F_q)} N_R^a(G) \le \sum_{a\in S_F(\F_q)} N_{R-1}(G) = |S_F(\F_q)|\cdot N_{R-1}(G).
    \]
\end{proof}
\begin{proof}[Proof of theorem \ref{thm:ark-grk} for finite fields]
    This follows from the fact that 
    \[\ark(F) \le \gamma_q(F_{\F_q(t)}) \le (d-1)\grk(F).
    \]
    We prove this now. By \cref{lemma:special-ff-scaling}, for any $R \geq 1$, we have
\begin{equation}\label{eqn:product-bound}
    N_R(G) = |N_1(G)| \cdot \prod_{j=1}^{R-1} \frac{N_{j+1}(G)}{N_j(G)} \leq |S_F(\F_q)|^R.
\end{equation}
By \cref{prop:F_q-estimate}, we have $\gamma_q(F_{\F_q(t)}) \leq (d-1) \grk(F)$, so for any $\ep>0$, there exists $R\in \Z^+$ such that
\begin{equation}\label{eqn:point-lowerbound}
     N_R(G) \geq q^{R(n(d-1) - (d-1)\grk(F) - \ep)}.
\end{equation}
Combining \cref{eqn:point-lowerbound} and \cref{eqn:product-bound} we obtain
\[
\ark(F) = \log_q (|S_F(\F_q)|)\leq (d-1)\grk(F) + \ep
\]
for all $\ep>0$, proving the claim.

\end{proof}

\section{Proof of \texorpdfstring{\cref{thm:ark-grk}}{Theorem 1.2} in characteristic zero}
In this section, $F$ denotes a $d$-tensor over $K = \Q(t_1,\dots, t_s)$ for some $s\geq 0$. Using the results of the previous section, we will derive the bound $\ark(F)\leq (d-1)^{s+1}\grk(F)$. 
\begin{enumerate}
    \item First, change bases so that $F$ has coefficients in $A = \Z[t_1,\dots, t_s]$. The variety \[\mathfrak S_F = (F(x,\cdot)\equiv 0)\subseteq \A_A^{n(d-1)}\] has generic fiber $S_F$, giving an integral model for $S_F$ over $A$.
    \item For $s = 0$, the argument is essentially the same as the case of $\F_q[t]$ in the previous section, so we assume $s\geq 1$ for the remainder of this outline.
    \item For all but finitely many primes $p$, the form $F\otimes_A A/p$ has the same geometric rank as $F$. By \cref{prop:F_q-estimate}, the variety $S_{F\otimes_A A/p}$ has many rational points of small height. Similarly to the previous section, many of these points lift to $A$-points of $\mathfrak S_F.$ 
    \item By Noether normalization, the abundance of $A$-points proves the desired upper bound $\ark(F)\leq (d-1)^{s+1}\grk(F)$.
\end{enumerate}

We begin with a semigroup analogue of lemma \ref{lem:scale-charp}.

\begin{lemma}[Scaling]\label{lem:scale-char0}
     Let $B$ be an abelian group and let $G:(\Z^n)^{d-1} \to B$ be a multi-linear map. For a positive integer $R$ set
    \begin{align*}
        N_R(G) &= |\{x\in (\Z^n)^{d-1}: G(x) = 0,\ 0\le x_j< R\  \forall j\in[n(d-1)]\}| \\
        Z_R(G) &= |\{x\in (\Z^n)^{d-1}: G(x) = 0,\ -R< x_j< R\  \forall j\in[n(d-1)]\}|.
    \end{align*}
    Then for any positive integer $L$ we have
    \[
        N_{LR}(G) \le L^{n(d-1)} Z_R(G).
    \]
\end{lemma}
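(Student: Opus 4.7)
The proof will closely mirror the strategy of Lemma \ref{lem:scale-charp}, with the key difference being that in characteristic zero we partition a large box in $\Z$ into translates of a smaller box, rather than quotienting by a subgroup. The translation step introduces a one-sided asymmetry, which is precisely why the statement passes from the ``positive'' box $[0,R)^{n(d-1)}$ to the symmetric box $(-R,R)^{n(d-1)}$.

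I will induct on $d$. The base case $d=2$ is the heart of the argument: $G : \Z^n \to B$ is a group homomorphism, and we partition
\[
[0, LR)^n \;=\; \bigsqcup_{y \in \{0, \ldots, L-1\}^n} \bigl( Ry + [0, R)^n \bigr),
\]
giving $L^n$ translated boxes. On each box, if no solution exists the contribution is zero; otherwise pick a solution $Ry + z_0$ with $z_0 \in [0, R)^n$, and note that the map $z \mapsto z - z_0$ bijects solutions in that box with $\{z' \in \Z^n : G(z') = 0,\ z' \in [-z_0,\, R - z_0)^n\}$. Since $z_0 \in [0, R)^n$, this latter range is contained in $(-R, R)^n$, so each fiber contributes at most $Z_R(G)$, giving $N_{LR}(G) \le L^n Z_R(G)$.

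For the inductive step, write
\[
N_{LR}(G) \;=\; \sum_{x_{d-1} \in [0, LR)^n} \bigl| \{(x_1, \ldots, x_{d-2}) \in [0, LR)^{n(d-2)} : G(x_1, \ldots, x_{d-1}) = 0\} \bigr|.
\]
For each fixed $x_{d-1}$, the map $G(\cdot, \ldots, \cdot, x_{d-1})$ is $(d-2)$-linear on $(\Z^n)^{d-2}$, so the inductive hypothesis bounds the inner count by $L^{n(d-2)}$ times the analogous count over $(-R,R)^{n(d-2)}$. Swapping the order of summation and applying the $d=2$ base case to the linear map $x_{d-1} \mapsto G(x_1, \ldots, x_{d-1})$ (for each fixed $(x_1, \ldots, x_{d-2}) \in (-R,R)^{n(d-2)}$) shrinks the $x_{d-1}$ range from $[0, LR)^n$ to $(-R, R)^n$ at the cost of an additional factor $L^n$. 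Multiplying gives $L^{n(d-2)} \cdot L^n = L^{n(d-1)}$, and the inner expression is now precisely $Z_R(G)$.

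The main obstacle, and the only one requiring care, is the base case: specifically, verifying that after translating by $z_0 \in [0, R)^n$ the shifted box $[-z_0, R-z_0)^n$ really sits inside $(-R, R)^n$. This is a minor check but it is what dictates the asymmetric definitions of $N_R$ versus $Z_R$ in the statement. Everything else is bookkeeping with multilinearity, exactly as in the finite group analogue.
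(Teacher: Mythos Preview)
Your proof is correct and follows essentially the same approach as the paper: partition $[0,LR)^n$ into $L^n$ translated boxes and bound each by $Z_R(G)$ via subtraction of a single solution in the base case $d=2$, then induct by fixing one coordinate, applying the inductive hypothesis, swapping the order of summation, and applying the base case to the remaining linear slot. The only cosmetic difference is that the paper fixes the first argument of $G$ in the induction while you fix the last.
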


\begin{proof}
    By induction on $d.$
    
    We begin with the base case $d=2$. First note that for any $b \in B$ the number of integer solutions to $G(x) = b$ in $[0,R)^n$ is bounded above by $Z_R(G).$ This follows by subtracting a single solution from each of the others. Therefore
    \begin{align*}
        N_{LR}(G) &= \sum_{0\le x_i<LR} 1_{G(x) = 0} = \sum_{\omega\in\{0,1,\ldots,L-1\}^n} \sum_{0\le x_i<R}  1_{G(x+R\omega) = 0} \\
        &= \sum_{\omega\in\{0,1,\ldots,L-1\}^n} \sum_{0\le x_i<R}  1_{G(x) = -G(R\omega)} \le  \sum_{\omega\in\{0,1,\ldots,L-1\}^n} Z_R(G) = L^n Z_R(G).
    \end{align*}
    
    For general $d>2,$ we have
    \begin{align*}
         N_{LR}(G) &= \sum_{x\in \Z^n, 0\le x_i<LR} N_{LR} (G(x,\cdot)) \le \sum_{x\in \Z^n ,0\le x_i<LR} L^{n(d-2)}Z_R (G(x,\cdot)) \\
         &= L^{n(d-2)} \sum_{y\in \Z^{n(d-2)}, -R < y_i < R}  N_{LR}(G(\cdot,y)) \\
         &\le L^{n(d-1)} \sum_{y\in \Z^{n(d-2)}, -R < y_i < R}  Z_R(G(\cdot,y)) = L^{n(d-1)} Z_R(G),
    \end{align*}
   where the first inequality follows from the inductive hypothesis and the second one from the base case $d=2.$
\end{proof}
\begin{definition}\label{defn:height-rank-0}
    Let $A = \Z[t_1,\dots, t_s]$ and let $K = \Q(t_1,\dots, t_s)$. Let $F: (A^n)^d\to A$ be a $d$-tensor. Define $G: (A^n)^{d-1}\to \Hom_A(A^n, A)$ by $G(x) = F(x, \cdot)$. Let $A_{\geq 0}$ be the additive sub-semigroup of $A$ in which the coefficients of all monomials are non-negative. For $x\in (A^n)^{d-1}$, let $\norm{x}\in \Z^{\geq 0}$ denote the largest absolute value of any integer coefficient appearing in any term of $x$. For $L, R_1,\dots, R_s\in \Z^+$, we define
    \[
    N_{L, R_1,\dots, R_s}(G) = |\{x\in (A_{\geq 0}^n)^{d-1}: G(x) = 0,\ \norm{x} < L,\ \deg_{t_i}(x) < R_i\text{ for all }1\leq i\leq s\}|
    \]
    and similarly define
    \[
        Z_{L, R_1,\dots, R_s}(G) = |\{x\in (A^n)^{d-1}: G(x)=0,\ \norm{x} < L,\ \deg_{t_i}(x) < R_i\text{ for all }1\leq i\leq s\}|.
        \]
    We then define the following invariants of $F$:
    \[
    \gamma_0(F) = n(d-1) - \limsup_{L, R_1,\dots, R_s\to \infty}  \frac{\log_L Z_{L, R_1,\dots, R_s}(G)}{R_1\dots R_s}.
    \]
    Additionally, let $G_L$ denote the map $(A^n)^{d-1}\xrightarrow{G}A^n\to (A/L)^n$. We also define 
    \[
    \delta_0(F) = n(d-1) - \limsup_{L, R_1,\dots, R_s\to \infty}  \frac{\log_L N_{L, R_1,\dots, R_s}(G_L)}{R_1\dots R_s}.
    \]
    Note that this definition makes sense even if $s = 0$, as the empty product equals 1.
\end{definition}
\begin{lemma}\label{lemma:lift-mod-L}
    Let $A,K,F,G$ be as in \cref{defn:height-rank-0}. Then $\gamma_0(F) \leq (d-1)\delta_0(F)$.
\end{lemma}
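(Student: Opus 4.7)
The plan is to extract honest integer zeros from mod-$L$ zeros via the scaling \cref{lem:scale-char0} applied to $G_L$, viewed as $\Z$-multilinear into the abelian group $(A/LA)^m$. If the target small-box size is $B \sim L^{1/(d-1)}$, then the elementary polynomial-multiplication estimate $\|G(x)\| \ll_G \|x\|^{d-1} \cdot (R_1\cdots R_s)^{d-2}$ forces $\|G(x)\| < L/2$ for any $x$ with $\|x\| < B$ and $\deg_{t_i}(x) < R_i$, so any such $x$ satisfying $G_L(x) = 0$ is automatically an honest integer zero of $G$. Crucially, the invariant $\delta_0(F)$ is computed using $\log_L$ while $\gamma_0(F)$ uses $\log_{L'}$ with $L'$ being the $\gamma_0$-box size; taking this latter box to be $B \sim L^{1/(d-1)}$ yields $\log_B L = (d-1) + o(1)$, and the factor $(d-1)$ in the target $\gamma_0(F) \le (d-1)\delta_0(F)$ will come entirely from this change of logarithmic base.

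In more detail, along a subsequence $(L_k, R_{1,k}, \ldots, R_{s,k}) \to \infty$ realizing the limsup defining $\delta_0(F)$, and with $L_k$ chosen to grow sufficiently faster than $R_k := R_{1,k}\cdots R_{s,k}$, define $B_k := \lfloor (L_k/(2C' R_k^{d-2}))^{1/(d-1)} \rfloor$, which tends to infinity. The scaling lemma applied to $G_{L_k}$, with $d-1$ arguments each in $\Z^{nR_k}$, then gives
\[
N_{L_k, R_{1,k}, \ldots, R_{s,k}}(G_{L_k}) \le (L_k/B_k)^{nR_k(d-1)} \cdot Z_{B_k}(G_{L_k}),
\]
and by construction $Z_{B_k}(G_{L_k}) \le Z_{B_k, R_{1,k}, \ldots, R_{s,k}}(G)$. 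Substituting $L_k/B_k \sim L_k^{(d-2)/(d-1)}$ and $\log_{B_k} L_k = (d-1) + o(1)$, taking $\log_{B_k}$, dividing by $R_k$, and passing to $\limsup$, the left-hand side becomes bounded above by $n(d-1) - \gamma_0(F)$ while the right-hand side approaches at least $(d-1)(n(d-1) - \delta_0(F)) - n(d-2)(d-1) = (d-1)(n - \delta_0(F))$. Rearranging yields
\[
\gamma_0(F) \le n(d-1) - (d-1)(n - \delta_0(F)) = (d-1)\delta_0(F).
\]

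The main technical point is ensuring that the $\delta_0$-realizing subsequence can indeed be taken with $L_k$ growing polynomially faster than $R_k$, so that $B_k \to \infty$ and the $R_k^{d-2}$ correction contributes only $o(R_k \log B_k)$ under the logarithm. This should be routine: standard point-counting asymptotics show that the rate $\log_L N_{L, R_1, \ldots, R_s}(G_L)/R$ stabilizes as $L \to \infty$ with the $R_i$ fixed or growing more slowly, so restricting to fast-growing subsequences in $L$ does not decrease the rate achieved in the definition of $\delta_0(F)$.
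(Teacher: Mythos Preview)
Your argument is essentially the paper's own: view $G_L$ as a $\Z$-multilinear map $(\Z^{nR})^{d-1}\to (A/L)^n$ (with $R=R_1\cdots R_s$), apply \cref{lem:scale-char0} to shrink the coefficient box from side $L$ to side $B\approx L^{1/(d-1)}$, use the bound on $\|G(x)\|$ to promote mod-$L$ zeros in the small box to honest zeros of $G$, and read off the factor $d-1$ from $\log_B L\to d-1$. The paper writes this with a parameter $\sigma\nearrow\tfrac{1}{d-1}$ in place of your explicit $B_k$, but the content is identical.

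You are in fact more careful than the paper on one point: you correctly note that the polynomial-multiplication estimate $\|G(x)\|\ll\|x\|^{d-1}$ carries a hidden factor $R^{d-2}$, so the lifting step requires $L$ to dominate a polynomial in $R$. The paper absorbs this into the symbols $\ll_{d,n,R}$, $\ll_F$ and the phrase ``for $L$ sufficiently large,'' without addressing how one simultaneously sends $R\to\infty$. Your proposed resolution---that one may pass to a $\delta_0$-realizing subsequence with $L_k$ growing super-polynomially in $R_k$ because ``standard point-counting asymptotics show that the rate stabilizes''---is plausible but is not immediate from the bare definition of a joint $\limsup$, and your sketch does not actually supply the argument. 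Neither you nor the paper fully justifies this step; a clean treatment (for instance, fixing $R$ and applying Lang--Weil to the variety $V_R\subset\A^{nR(d-1)}_{\Q}$ cut out by the coefficient equations of $G$, to show the rate along primes $L=p\to\infty$ converges to $\dim V_R/R$) would strengthen both.
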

Assuming this lemma, we are now able to prove \cref{thm:ark-grk} for fields of characteristic zero.
\begin{proof}
    A standard application of Noether normalization (\cref{lemma:height-points-0}) shows that $\ark(F)\leq \gamma_0(F)$.
    By \cref{lemma:lift-mod-L}, it suffices to prove that $\delta_0(F) \leq (d-1)^s\grk(F)$. For $s = 0$, this follows from \cite{KMZ20}, Theorem 8.1. For $s \geq 1$, we note by \cref{lemma:dim-fiber} that for all but finitely many primes $p\in \Z$, we have $\grk(F\otimes_A \kappa(p)) = \grk(A)$. Consequently, the bound $\delta_0(F) \leq (d-1)^s\grk(F)$ follows from \cref{prop:F_q-estimate} for all $s\geq 1$.
\end{proof}
To conclude this section, we now prove \cref{lemma:lift-mod-L}.
\begin{proof}
    Choose $L, R_1,\dots, R_s\in \Z^+$ such that 
    \begin{equation}\label{eqn:NvGL}
         N_{L, R_1,\dots, R_s}(G_L) \geq L^{ R_1\dots R_s(n(d-1) - \delta_0(F) - \ep)}.
    \end{equation}
    Let $R = R_1\dots R_s$. The basis given by monomials yields an isomorphism \linebreak $\Z[t_1,\dots, t_s]_{\deg t_i < R_i}\cong \Z^R$ of abelian groups, which sends $G_L$ to a multi-linear map  $\tilde G_L:(\Z^{Rn})^{d-1} \to (A/L)^n.$  Under this identification, $\norm{x}$ denotes the largest absolute value of any entry of the corresponding vector in $(\Z^{Rn})^{d-1}.$ For $0<\sigma<1$, \cref{eqn:NvGL} together with \cref{lem:scale-char0} gives us
        \[
            L^{R(n(d-1) - \delta_0(F) - \ep)} \leq N_L(\tilde G_L) \leq \ceil{L^{1-\sigma}}^{Rn(d-1)} Z_{\ceil{L^\sigma}}(\tilde G_L) \ll_{d,n,R} L^{R(1-\sigma)n(d-1)} Z_{\ceil{L^\sigma}}(G_L).
        \]
        In particular, we have 
        \begin{equation}\label{eqn:Z_L-estimate}
        Z_{\ceil{L^\sigma}}(G_L) \gg_{d,n,R} L^{R\sigma \left(n(d-1) - \frac{\delta_0(F) + \ep}{\sigma} \right)} \gg_{d,n,R} \ceil{L^\sigma}^{Rn(d-1) - \frac{\delta_0(F) + \ep_{\vec v}}{\sigma}}.
        \end{equation}
        
        For any $x \in (\Z[t_1,\dots, t_s]_{\deg t_i < R_i}^n)^{d-1}$ satisfying $\|x\| < \ceil{L^\sigma}$, we have \linebreak $\norm{G(x)} \ll_F L^{\sigma(d-1)}$. Therefore, for $L$ sufficiently large, $\sigma < \frac{1}{d-1}$ and $\|x\| < \ceil{L^\sigma}$,
        \[
        G_L(x) = 0 \mod L \iff G(x) =  0.
        \]
        Consequently, \cref{eqn:Z_L-estimate} yields
        \[
        \gamma_0(F) \leq \frac{\delta_0(F) + \ep}{\sigma}.
        \]
        Taking the limit $\ep\searrow 0$ and $\sigma\nearrow \frac{1}{d-1}$ completes the proof.
\end{proof}

\appendix
\section{}
The results contained within this appendix are standard and similar to existing results. Nevertheless, we detail their proofs here for the sake of self-containedness and completeness.
\subsection{A Consequence of Noether Normalization}
\begin{lemma}[Height point growth in positive characteristic]\label{lemma:height-points-p}

    Let $A = \F_q[t_1,\dots, t_r]$ and $K = \F_q(t_1,\dots, t_r)$. Suppose $X\subset \A^n$ is an affine variety defined over $K.$ We define a height function on points of $A^n$ by $H(x) =(\deg_{t_1}x,\ldots,\deg_{t_r}x).$ 
    For $R_1,\dots, R_r\in \N$,  set
    \[
    X(R_1,\dots, R_r) = \{x\in X(A): H(x)_i<R_i \text{ for all }1\leq i\leq r\}.
    \] 
    Then
    \[
        \dim(X) \geq \limsup_{R_1,\dots, R_r\to \infty}\frac{\log_q|X(R_1,\dots, R_r)|}{R_1\dots R_r}.
    \]
\end{lemma}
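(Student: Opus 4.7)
The plan is to apply Noether normalization to reduce to the trivial point count on $\A^d$, where $d = \dim X$. Without loss of generality we may assume $X$ is irreducible, since for reducible $X$ the point count is at most the sum over irreducible components and $\dim X$ is the maximum of their dimensions. Since $K$ is infinite, we can choose linear forms $\pi_j = \sum_{i=1}^n \lambda_{ji} x_i$ ($1 \le j \le d$) with $\lambda_{ji} \in K$ such that the associated morphism $\pi \colon X \to \A^d_K$ is finite and surjective. After multiplying each $\pi_j$ by a common nonzero denominator $f \in A$, we may assume that all $\lambda_{ji} \in A$; this does not affect finiteness of $\pi$, since we have merely post-composed with the automorphism of $\A^d_K$ given by scalar multiplication by $f \in K^*$. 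Set $c_l = \max_{i,j} \deg_{t_l}(\lambda_{ji})$.

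If $x \in X(R_1,\ldots,R_r)$, then each coordinate of $\pi(x)$ has $t_l$-degree less than $c_l + R_l$, so $\pi(x)$ lies in the subset $Y \subset A^d$ of tuples satisfying this degree bound. The trivial count gives $|Y| = q^{d \prod_l (c_l + R_l)}$. To control the fibers of $\pi$ on $X(A)$, note that finiteness of $\pi$ implies each coordinate function $x_i$ on $X$ is integral over $K[\pi_1,\ldots,\pi_d]$: there exists a monic polynomial $p_i(z) \in K[\pi_1,\ldots,\pi_d][z]$ of some degree $D_i$ with $p_i(x_i) = 0$ in the coordinate ring of $X$. For any fixed $y \in A^d$ and any $x \in X(A)$ with $\pi(x) = y$, the $i$-th coordinate $x_i$ must be a root of the specialized monic polynomial $p_i|_{\pi = y}(z) \in K[z]$, which yields at most $D_i$ possibilities. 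Hence each fiber of $\pi \colon X(A) \to A^d$ has cardinality at most the constant $C = \prod_i D_i$, independent of $R_1,\ldots,R_r$.

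Combining these two bounds gives $|X(R_1,\ldots,R_r)| \le C \cdot q^{d \prod_l (c_l + R_l)}$. Taking $\log_q$, dividing by $R_1 \cdots R_r$, and noting that $\prod_l (c_l + R_l)/\prod_l R_l \to 1$ as all $R_l \to \infty$ yields the desired inequality. The subtlest point is that we need the projection $\pi$ to be \emph{linear} in the $x_i$, so that the height estimate $\deg_{t_l}(\pi_j(x)) < c_l + R_l$ distorts heights only additively in each $R_l$; a general polynomial projection of degree $D>1$ would distort heights multiplicatively and produce only a useless bound of the shape $d \cdot D^r$. This is precisely why we invoke the linear form of Noether normalization available over infinite fields, and why the reduction of earlier sections to the purely transcendental case is compatible with this step.
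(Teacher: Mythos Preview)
Your proof is correct and follows essentially the same approach as the paper's: linear Noether normalization over the infinite field $K$, an additive height distortion $R_l \mapsto R_l + c_l$ under $\pi$, and a uniform constant bound on fiber sizes. The only cosmetic differences are that you reduce to irreducible $X$ first (harmless but unnecessary, since the linear projection argument works for any affine $X$) and that you bound fibers by $\prod_i D_i$ via explicit integrality equations rather than simply invoking $\deg(\pi)$.
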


\begin{proof}
    The key is that we have
    \[
        |\A^b(R_1,\dots, R_r)| = |\A^1(R_1,\dots, R_r)|^b = q^{bR_1\dots R_r}.
    \]
    By Noether normalization, there exists a finite map $\pi: X\to \A^b$ for $b = \dim X$. Since $A$ is infinite, this map can be taken linear with coefficients in $A$. Consequently, there exists $C>0$ such that 
    \[
    \pi(X(R_1,\dots, R_s))\subseteq \A^b(R_1+C,\dots, R_r+C).
    \]
    Taking cardinalities of both sides, we obtain
    \[
    |X(R_1,\dots, R_r)|\leq \deg(\pi)q^{b(R_1+C)\dots(R_r+C)}.
    \]
    Taking $\log_q$, dividing by $R_1\ldots R_r$ and then taking the limit $R_1,\dots, R_r\to \infty$ completes the proof. 
\end{proof}

\begin{lemma}[Height point growth in characteristic zero]\label{lemma:height-points-0}
    Let $A = \Z[t_1,\dots, t_s]$ and $K = \Q(t_1,\ldots,t_s)$. Suppose $X\subset \A^n$ is an affine variety defined over $K.$ We define a height function on points of $A^n$ by  $H(x) = (L, \deg_{t_1}x,\dots, \deg_{t_s}x)$, where $L$ is the maximum of the absolute values of the coefficients in $x.$
    For $L,R_1,\dots, R_s\in \N$, set 
    \[
    X(L,R_1,\dots, R_s) = \{x\in X(A): H(x)_0<L, H(x)_i < R_i\text{ for all }1\leq i\leq s\}.
    \]
    Then
    \[
        \dim (X) \ge \limsup_{L, R_1,\dots, R_s\to \infty}\frac{\log_L(X(L, R_1,\dots, R_s))}{R_1\dots R_s}\geq a.
    \]
\end{lemma}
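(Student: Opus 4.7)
The plan is to mirror the proof of the positive characteristic analogue (Lemma A.1), using Noether normalization and an elementary point count on affine space. (The trailing ``$\geq a$'' in the statement appears to be a typo inherited from an earlier draft, so I will prove the remaining inequality $\dim(X) \geq \limsup \tfrac{\log_L |X(L,R_1,\dots,R_s)|}{R_1\cdots R_s}$.)

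First I would invoke Noether normalization to produce a finite morphism $\pi\colon X \to \A^b$ with $b = \dim X$. Since $A$ contains $\Z$ and is therefore infinite, a generic $K$-linear projection restricts to a finite map; after clearing denominators I may assume $\pi$ is given by a $b\times n$ matrix $M$ with entries in $A$. Let $C = \max_{i,j,k}\deg_{t_k} M_{ij}$ and let $M_0$ bound the absolute values of the coefficients of all $M_{ij}$.

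Next I would estimate the height of $\pi(x)$ for $x \in X(L,R_1,\dots,R_s)$. Each entry $M_{ij} x_j$ has degree at most $R_k + C$ in $t_k$. For the coefficient bound I use the elementary fact that $\|pq\|_\infty \leq \min(|\mathrm{supp}(p)|,|\mathrm{supp}(q)|)\cdot\|p\|_\infty\|q\|_\infty$, applied with $p=M_{ij}$: since $|\mathrm{supp}(M_{ij})| \leq (C+1)^s$ is a constant depending only on $\pi$ (not on the $R_i$), I obtain $\|M_{ij}x_j\|_\infty \leq (C+1)^s M_0 L$. Summing the $n$ terms in each row of $Mx$ gives $\pi(x) \in \A^b(L', R_1+C,\dots,R_s+C)$ where $L' = n(C+1)^s M_0 L + 1 \ll_{\pi} L$. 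Combining this with finiteness of $\pi$ (each fiber has at most $\deg \pi$ points) and the trivial count $|\A^b(L'',R_1',\dots,R_s')| = (2L''-1)^{bR_1'\cdots R_s'}$, I get
\[
|X(L,R_1,\dots,R_s)| \leq \deg(\pi)\cdot(2L'-1)^{b(R_1+C)\cdots(R_s+C)}.
\]

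Finally I would take $\log_L$ and divide by $R_1\cdots R_s$. The resulting bound has three error factors: $(\log_L \deg\pi)/(R_1\cdots R_s) \to 0$; $(R_1+C)\cdots(R_s+C)/(R_1\cdots R_s) \to 1$; and $\log_L(2L'-1) = 1 + \log_L(2n(C+1)^s M_0) + o(1) \to 1$ as $L \to \infty$. Hence the limsup is at most $b = \dim X$, as required. The one step requiring vigilance is the coefficient-bound estimate: if one naively bounded $\|M_{ij}x_j\|_\infty$ using the support of $x_j$ instead of that of $M_{ij}$, then $L'$ would pick up a factor of $R_1\cdots R_s$ which the $\log_L$ could not fully absorb; exploiting the bounded support of $M_{ij}$ is what makes the argument go through uniformly as all parameters tend to infinity.
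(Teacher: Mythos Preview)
Your proof is correct and follows the same approach as the paper: Noether normalization to a finite linear projection $\pi:X\to\A^b$ with coefficients in $A$, then the trivial count on $\A^b$ and a limit. The paper simply asserts $\pi(X(L,R_1,\dots,R_s))\subseteq \A^b(CL,R_1+C,\dots,R_s+C)$ for some constant $C$, whereas you supply the justification via the support bound $\|M_{ij}x_j\|_\infty \le |\mathrm{supp}(M_{ij})|\cdot M_0 L$; your closing remark about why bounding via $|\mathrm{supp}(x_j)|$ would fail is exactly the point hidden in the paper's one-line claim.
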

\begin{proof}
    The key is that 
    \[
        |\A^b(L,R_1,\dots, R_s)| = |\A^1(L,R_1,\dots, R_s)|^b = (2L-1)^{bR_1\dots R_s} \leq (2L)^{bR_1\dots R_s}.
    \]
    By Noether normalization, there exists a finite map $\pi: X\to \A^b$ for $b = \dim X$. Since $A$ is infinite, this map can be taken linear with coefficients in $A$.  Consequently, there exists $C>0$ such that 
    \[
    \pi(X(L,R_1,\dots, R_s))\subseteq \A^b(CL, R_1+C,\dots, R_s+C).
    \]
    Taking cardinalities of both sides, we obtain
    \[
    |X(L, R_1,\dots, R_s)|\leq \deg(\pi)(2CL)^{b(R_1+C)\dots(R_s+C)}.
    \]
    Taking $\log_L$, dividing by $R_1\dots R_s$ and taking the limit  $L,R_1,\dots, R_s\to \infty$ completes the proof. 
\end{proof}

\subsection{Embeddings into Local Fields}
In \cite{C76}, Cassels constructs embeddings from a finitely generated extension of $\Q$ into $\Q_p$ for infinitely many primes $p$. For our purposes, we require a weaker result for finite extensions of $\F_q(t)$. These arguments are minor modifications of Cassels's own.
\begin{lemma}\label{lemma:nonvanishing}
Let $A = \F_q[t]$. Let $G(X) \in A[X]$ be a non-constant function. Then $G$ has a solution mod $\pi$ for infinitely many prime elements $\pi \in A$.
\end{lemma}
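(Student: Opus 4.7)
The plan is to adapt the classical Schur-type argument for $\Z$ (every non-constant polynomial in $\Z[X]$ has a root modulo infinitely many primes) to the function field setting, replacing rational primes by irreducible elements of $A=\F_q[t]$ and using the fact that the units of $A$ are precisely the nonzero elements of $\F_q$.

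First, I would dispose of the trivial case $a_0:=G(0)=0$, in which $X=0$ is a root of $G$ modulo every prime of $A$. So assume $a_0\neq 0$ and argue by contradiction: suppose $G$ has a root modulo only the finite list of primes $\pi_1,\dots,\pi_k$. After enlarging this list we may further assume every irreducible factor of $a_0$ appears among the $\pi_i$.

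The key construction is to set $M:=a_0\pi_1\cdots\pi_k$ and examine
\[
\tilde G(Y)\ :=\ G(MY)/a_0\ =\ 1+\pi_1\cdots\pi_k\,Y\,h(Y)
\]
for a suitable $h\in A[Y]$, obtained by pulling the common factor $a_0$ out of the higher-degree terms of $G(MY)$, each of which is divisible by $M=a_0\pi_1\cdots\pi_k$. Since $G$ is non-constant and $M\neq 0$, $\tilde G$ has positive $Y$-degree. Specialize $Y$ to any element of $A$ whose $t$-degree is large enough that the leading $Y$-term of $\tilde G$ dominates in $t$-degree; then $\tilde G(Y)\in A$ has positive $t$-degree, hence is a non-unit of $\F_q[t]$, and therefore has some irreducible factor $\pi\in A$. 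The explicit formula shows $\tilde G(Y)\equiv 1\pmod{\pi_i}$ for every $i$, so $\pi$ is distinct from each $\pi_i$; on the other hand $\pi\mid \tilde G(Y)$ yields $\pi\mid G(MY)$, so $MY\bmod \pi$ is a root of $G$ modulo $\pi$. This contradicts the assumed finiteness of the list $\{\pi_i\}$.

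The only mildly delicate point is guaranteeing that the specialization $\tilde G(Y)$ is a non-unit of $A$, which is exactly where non-constancy of $G$ enters (through non-constancy of $\tilde G$ in the variable $Y$) in tandem with the characterization of units of $\F_q[t]$; every other step is formal bookkeeping and exactly parallels the classical argument over $\Z$.
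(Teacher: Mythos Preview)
Your proof is correct and follows essentially the same Schur-type argument as the paper: both substitute $X = a_0\cdot(\text{product of the finitely many primes})\cdot Y$, divide by $a_0$, and then specialize $Y$ to something of large $t$-degree to force the result to be a non-unit of $\F_q[t]$. The paper singles out exactly the step you flag as ``mildly delicate'' (showing the specialization is not a unit multiple of $G(0)$) and handles it the same way, by taking $Y=t^n$ with $n\gg 0$.
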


\begin{proof}
Cassels gives an elementary proof of this result in \cite{C76A} for $A = \mathbb{Z}$. 
The proof for $\F_q[t]$ is identical, with only one claim requiring special justification in this case.
\begin{claim}
    Let $P$ be a finite set of primes in $\F_q[t]$ and suppose $G(0)\neq 0$. Then there exists $c$ divisible by all $\pi\in P$ such that $G(G(0)c)$ is not a unit multiple of $G(0)$.
\end{claim}
To prove this claim, it suffices to take $c = t^n\prod_{\pi\in P}\pi$ for $n\gg 0$.
\end{proof}

\begin{lemma}\label{lemma:field-emb}
Let $A = \F_q[t],\ K = \F_q(t)$ and let $L/K$ be a finite extension. Let $e\geq 0$ such that $L^{p^e}\subseteq K^{\text{sep}}$. Then there are infinitely many primes $\pi\in A$ which admit a field embedding $\a_\pi: L \to K_\pi$ extending the Frobenius iterate $F^e: A\to A$.
\end{lemma}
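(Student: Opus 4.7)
The plan is to adapt Cassels's construction from \cite{C76}, where the main novelty is using a Freshman's dream identity to handle the possible inseparability of $L/K$. By scaling $\theta$ by a suitable element of $A$, we may assume $L = K(\theta)$ with $\theta$ integral over $A$, so the minimal polynomial $g$ of $\theta$ over $K$ lies in $A[X]$. Writing $p^{e_0}$ for the inseparable degree of $L/K$, we decompose $g(X) = g_{\mathrm{sep}}(X^{p^{e_0}})$ with $g_{\mathrm{sep}} \in A[X]$ separable; the hypothesis $L^{p^e} \subseteq K^{\mathrm{sep}}$ forces $e_0 \leq e$.

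Next, I would apply \cref{lemma:nonvanishing} to $g_{\mathrm{sep}}$ to find infinitely many primes $\pi \in A$ modulo which $g_{\mathrm{sep}}$ admits a root (the linear case is trivial, handled directly by taking any $\pi$ not dividing the denominator of the unique root $\theta^{p^{e_0}} \in K$). Excluding the finitely many primes $\pi$ dividing $\mathrm{disc}(g_{\mathrm{sep}})$, such a root is simple and hence lifts via Hensel's lemma to some $\beta \in A^{\wedge \pi}$.

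The crucial step is then to set $Y = \beta^{p^{e-e_0}} \in A^{\wedge \pi} \subseteq K_\pi$ and verify that $Y$ is a root of the Frobenius-twisted polynomial $g^\iota(X) := \sum a_i^{p^e} X^i$, where $g(X) = \sum a_i X^i$. Using the characteristic $p$ identity $g_{\mathrm{sep}}(Z)^{p^e} = g_{\mathrm{sep}}^\iota(Z^{p^e})$ (i.e.\ Freshman's dream), we compute
\[
g^\iota(Y) = g_{\mathrm{sep}}^\iota(Y^{p^{e_0}}) = g_{\mathrm{sep}}^\iota(\beta^{p^e}) = g_{\mathrm{sep}}(\beta)^{p^e} = 0.
\]
Since $L \cong K[X]/(g)$, sending $\theta \mapsto Y$ and applying $F^e$ to $K$-coefficients produces a ring map $\alpha_\pi : L \to K_\pi$ extending $F^e : A \to A \subseteq K_\pi$, and injectivity is automatic because $L$ is a field. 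Performing this for each allowable $\pi$ yields the infinitely many desired embeddings.

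I expect the main obstacle to be cleanly threading the Frobenius twist through the inseparable layer of $L/K$, which is precisely why we first isolate the separable part $g_{\mathrm{sep}}$ and only take the compensating $p^{e-e_0}$-th power \emph{after} lifting to $A^{\wedge \pi}$; attempting to extract $p^{e_0}$-th roots directly in $K_\pi$ would fail, since $K_\pi$ is not perfect. Once this is set up correctly, the rest is a faithful transposition of Cassels's function-field argument, with \cref{lemma:nonvanishing} playing the role of his existence-of-primes result.
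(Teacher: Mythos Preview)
Your proof is correct and follows essentially the same approach as the paper's: locate primes where the separable part of the minimal polynomial has a simple root via \cref{lemma:nonvanishing}, lift by Hensel, and handle the inseparable layer through Frobenius. The paper packages the last step slightly more modularly---first proving the separable case outright and then precomposing with $F^e:L\to L^{p^e}$---whereas your explicit computation with $g^\iota$ and $Y=\beta^{p^{e-e_0}}$ simply unwinds that composition.
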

\begin{proof}
Start with the case that $L/K$ is separable and write $L = K(\beta)$ using the primitive element theorem. Suppose $\beta$ satisfies minimal equation $H(z) = H_s z^s + \dots + H_0$ over $K$. Clear denominators so that $H_i \in A$. Let $P'$ denote the set of prime elements $\pi \in A$ such that $H(z) = 0$ has a solution mod $\pi$. Remove from $P'$ all primes which divide $(H_s \Delta(H))$. By \cref{lemma:nonvanishing}, the resulting set $P$ is still infinite. Let $\pi \in P$. By the fact that 
\[\Delta H \not\equiv 0 \mod \pi,\]
it follows that $H$ has a \textit{simple} root mod $\pi$. By Hensel's lemma, it follows that $H$ has a root $\eta \in A^{\wedge \pi}$. We then define $\a_\pi(\beta) = \eta$, which gives a well-defined embedding $\a_\pi: L\to K_\pi$ extending the identity map $A\to A$. In the case that $L/K$ is inseparable, we apply the above argument to $L^{p^e}/K$ for some $e$ such that $L^{p^e}\subseteq K^{\text{sep}}$ and precompose with $F^e: L\to L^{p^e}$.
\end{proof}

\printbibliography

\end{document}